\newtheorem{thm}{Theorem}[section]
\newtheorem{prop}[thm]{Proposition}
\newtheorem{lem}[thm]{Lemma}
\newtheorem{cor}[thm]{Corollary}
\theoremstyle{definition}
\newtheorem{definition}[thm]{Definition}
\newtheorem{ques}[thm]{Question}
\newtheorem{remark}[thm]{Remark}
\numberwithin{equation}{section}
\newcommand{\bN}{\mathbb{N}}
\newcommand{\bP}{\mathbb{P}}
\newcommand{\bQ}{\mathbb{Q}}
\newcommand{\bR}{\mathbb{R}}
\newcommand\cA{\mathcal{A}}
\newcommand\cB{\mathcal{B}}
\newcommand\cD{\mathcal{D}}
\newcommand\cE{\mathcal{E}}
\newcommand\cF{{\mathcal{F}}}
\newcommand\cG{{\mathcal{G}}}
\newcommand\cH{{\mathcal{H}}}
\newcommand\cI{{\mathcal{I}}}
\newcommand\cJ{{\mathcal{J}}}
\newcommand\cL{{\mathcal{L}}}
\newcommand\cM{{\mathcal{M}}}
\newcommand\cO{{\mathcal{O}}}
\newcommand\cP{\mathcal{P}}
\newcommand\cR{{\mathcal{R}}}
\newcommand\cS{{\mathcal{S}}}
\newcommand\cT{{\mathcal{T}}}
\newcommand\cU{\mathcal{U}}
\newcommand\cW{{\mathcal{W}}}
\newcommand\cV{{\mathcal{V}}}
\newcommand\cX{{\mathcal{X}}}
\newcommand\cY{{\mathcal{Y}}}
\newcommand\cZ{{\mathcal{Z}}}
\newcommand\M{\mathbf{M}}
\newcommand\N{\mathbf{N}}
\newcommand\R{\mathbf{R}}
\newcommand\h{\mathrm{h}}
\newcommand\vv{\mathrm{v}}
\DeclareMathOperator{\Aut}{Aut}
\newcommand{\mult}{\operatorname{mult}}
\newcommand{\coeff}{\operatorname{coeff}}
\newcommand{\Supp}{\operatorname{Supp}}
\newcommand{\lct}{\operatorname{lct}}
\newcommand{\vol}{\operatorname{vol}}
\newcommand{\Ivol}{\operatorname{Ivol}}
\newcommand{\Div}{\operatorname{Div}}
\newcommand{\red}{\operatorname{red}}
\newcommand{\Pic}{\operatorname{Pic}}
\title[Polarized Log Calabi--Yau Fibrations with Bounded Bases]{Boundedness of Polarized Log Calabi--Yau Fibrations with Bounded Bases}
\author{Xiaowei Jiang, Junpeng Jiao,  Minzhe Zhu }
	\date{July 13, 2026}
	\subjclass[2020]{14E30,14J10,14J40}
	\keywords{boundedness, polarized log CY fibrations, stable minimal models, fibered CY varieties}
\address{Shanghai Center for Mathematical Sciences, Fudan University, Shanghai 200438, China}
	\email{xw\_jiang@fudan.edu.cn}
    \address{Yau Mathematical Sciences Center, Tsinghua University, Haidian District, Beijing, 100084, China}
	\email{jiao\_jp@mail.tsinghua.edu.cn}
    \address{School of Mathematics, Korea Institute for Advanced Study, Seoul,
02455, Korea}
\email{zhumz@kias.re.kr}
\begin{document}
\begin{abstract}
We investigate the boundedness problem for log Calabi--Yau fibrations whose bases and general fibers are bounded. We prove that the total spaces of log Calabi--Yau fibrations are bounded in codimension one after fixing some natural invariants, which confirms a conjecture of Birkar and Hacon. We also prove that the total spaces are bounded if, in addition, the irregularity of the general fibers vanishes. Then we apply our results to the boundedness problem for stable minimal models and fibered Calabi--Yau varieties.
\end{abstract}
\maketitle
\tableofcontents
\section{Introduction}

Throughout this paper,  we  work over an algebraically closed field $k$ of characteristic zero. By \textit{integral divisor}, we mean a $\mathbb{Q}$-Cartier Weil divisor with integer coefficients that is not necessarily Cartier and need not be effective.
\vspace*{10pt}

According to the minimal model program conjecture and the abundance conjecture, every projective variety $Y$ is birational to a projective variety $X$ with mild singularities such that either $X$ is canonically polarized, or $X$ admits a Mori--Fano fibration $X \to Z$, or $X$ admits a Calabi--Yau fibration $X \to Z$.
For this reason, canonically polarized varieties, Fano varieties, Calabi--Yau varieties, and their fibrations play a central role in birational geometry. From the perspective of constructing a moduli space for a given class of varieties, the first step is to determine whether they form a bounded family. For the definition of boundedness for varieties, see \S\ref{sec:bdd family}.

The boundedness of canonically polarized varieties is established in \cite{haconACCLogCanonical2014,haconBoundednessModuliVarieties2018}, and the boundedness of Fano varieties with mild singularities, known as the famous BAB conjecture, is proved by Birkar \cite{birkarAntipluricanonicalSystemsFano2019, birkarSingularitiesLinearSystems2021}. However, for Calabi--Yau varieties, due to the lack of a natural polarization, the question of boundedness remains widely open even in dimension three for strict Calabi--Yau manifolds.
Nevertheless, for polarized Calabi--Yau varieties, Birkar shows that boundedness holds under certain conditions \cite{birkarGeometryPolarisedVarieties2023}: either one allows a non-effective polarization while requiring the underlying variety to be klt, or, if the underlying variety is slc, the polarization must be an effective divisor that does not contain the non-klt center of the variety.

Based on the predictions of the minimal model program and the abundance conjecture, it is important to extend boundedness results to Fano fibrations and Calabi--Yau fibrations. Such fibrations also frequently appear in inductive arguments.
In \cite{jiangBirationalBoundednessFano2018}, Jiang considered the birational boundedness of Fano fibrations under several conjectural assumptions. 
Later, Birkar used some of these arguments to obtain the birational boundedness of Fano fibrations and carried out further work to establish boundedness \cite{birkarBoundednessFanoType2024}.
However, the boundedness of Calabi--Yau fibrations is not fully understood, although some literature addresses this direction \cite{filipazziInvariancePlurigeneraBoundedness2020,birkarBoundednessVolumeGeneralised2021,birkarModuliAlgebraicVarieties2022,jiangBoundednessModuliTraditional2023,birkarBoundednessEllipticCalabiYau2024,filipazziBoundednessNfoldsKappa2024,filipazziBoundednessEllipticCalabi2025,hashizumeBoundednessModuliSpaces2023,jiaoBoundednessPolarizedCalabiYau2022,zhuBoundednessStableMinimal2025}.

A \textit{Calabi--Yau fibration} $f\colon X\to Z$ consists of a projective variety $X$  and a contraction
$f\colon X \to Z$ such that
$K_X \sim_{\bQ,Z} 0$.
In this paper, we investigate the following guiding question: under what conditions does the total space of a Calabi--Yau fibration
$f\colon X\to Z$ belong to a bounded family?
We answer this question with the following result, which is a special case of
Theorem~\ref{thm: finite coeff bdd in codim 1} and Theorem~\ref{mainthm2}.

\begin{thm}\label{thm:toy}
Let $d\in \bN$ and $v,r,\epsilon\in\bR^{>0}$. 
Consider projective normal varieties $X$ such that
\begin{enumerate}
    \item $X$ is $\epsilon$-lc of dimension $d$,
    \item $f:X\to Z$ is a fibration such  that $K_X\sim_{\bQ,Z}0$,
    \item $A$ is an integral divisor on $X$ that is ample over $Z$, and $\vol(A|_F)\leq v$, where $F$ is the general fiber of $f:X\to Z$,  
    \item $H\geq 0$ is a very ample divisor on $Z$ such that $H^{\dim Z}\leq r$, and
    \item $f^*H-K_X$ is pseudo-effective.
\end{enumerate}
Then the set of such $X$ is bounded in codimension one.
If further,
\begin{enumerate}
    \setcounter{enumi}{5}
    \item $\Supp R^1f_*\cO_X\subsetneq Z$,
\end{enumerate}
then the set of such $X$ forms a bounded family.
\end{thm}

Note that condition~(4) implies that the base $Z$ belongs to a bounded family, and condition~(3) implies that the general fiber $F$ belongs to a bounded family; see \cite[Corollary~1.6]{birkarGeometryPolarisedVarieties2023}.
However, even if both the base and the general fiber belong to bounded families, the total space need not be bounded in general.

For example, by \cite[Example~3.1]{filipazziBoundednessNfoldsKappa2024},  there exists a set of smooth elliptic surfaces
$f_n\colon S_n \to C$ over a curve $C$ with $g(C)\geq 2$,
such that each $f_n$ is smooth and isotrivial. 
It then follows from the canonical bundle formula that $K_{S_n} \sim_{\bQ} f_n^* K_C$. Hence condition~(5) is satisfied in this setting since we may choose a very ample divisor $H$ on $C$ such that $H-K_C$ is pseudo-effective.
However, the collection $\{S_n\}$ is not bounded, as $S_n$ does not admit a multisection of degree smaller than $n$.
This example illustrates that condition~(3) is not merely used to bound the general fibers.

On the other hand, by \cite[Corollaries~2.4 and~2.5]{mirandaModuliWeierstrassFibrations1981}, there exists a set of minimal elliptic surfaces
$g_n\colon X_n \to \bP^1$ with sections $C$ satisfying $C^2=-n$ for $n\in \bN$,
such that $K_{X_n} \sim_{\bQ} (n-2)F$, where $F$ denotes a fiber of $g_n$. In this case, condition~(3) is satisfied, yet the total spaces $X_n$ are still not bounded,
since the ``degree'' of $K_{X_n}$ cannot be controlled by a very ample divisor $H$ on $Z$
with bounded $H^{\dim Z}$.
This demonstrates the necessity of condition~(5).

Condition~(6) means that the irregularity of the general fiber $F$ vanishes,
and it is natural to ask whether this assumption can be removed in order to obtain full boundedness.

\vspace{0.5cm}
{\textbf{\sffamily{Polarized log Calabi--Yau fibration.}}} 
We now turn to the preparation for more general statements of our results. Inspired by the study of Fano type fibrations (see Definition \ref{def:FT fib} and Theorem \ref{FTF}) in \cite{jiangBirationalBoundednessFano2018,birkarBoundednessFanoType2024}, we introduce a special structure for log Calabi--Yau fibrations equipped with polarizations on both the base and the general fiber.

\begin{definition}
    A \textit{polarized log Calabi--Yau fibration} (resp. \textit{weak polarized log Calabi--Yau fibration}) $f:((X,B),A)\to (Z,H)$ consists of 
\begin{enumerate}
   
    \item a projective  pair $(X,B)$,
    \item a fibration $f:X\to Z$ such that $K_X+B\sim_{\bR}f^*N$ for some $\bR$-divisor $N$ on $Z$, 
    \item an integral divisor $A$ on $X$ that is ample over $Z$, and
    \item a very ample divisor $H\geq 0 $ on $Z$ such that $H-N$ is ample (resp. pseudo-effective).
 
\end{enumerate}

\end{definition}
Note that $H$ is a polarization on the base $Z$, and $A|_F$ is a polarization on the general fiber $F$ of $f\colon X \to Z$.
 If $f$ is constant, that is, if $Z$ is a point, then $K_X+B\sim_{\mathbb{Q}} 0$ and $A$ is an ample integral divisor on $X$. In this case, we call $((X,B),A)$ a \textit{polarized log Calabi--Yau pair}.

   We now fix some invariants of a (weak) polarized log Calabi--Yau  fibration.
\begin{definition}
Let $d\in \bN$, $v,r,\epsilon\in\mathbb{R}^{>0}$, and $\Phi\subset[0,1]\cap \bR$ be a DCC set.
\begin{enumerate}
    \item A \textit{(weak) $(d,r,\epsilon)$-polarized log Calabi--Yau  fibration} is a (weak) polarized log Calabi--Yau  fibration $f:((X,B),A) \to (Z,H)$ satisfying
    \begin{itemize}
        \item $(X,B)$ is a projective $\epsilon$-lc pair of dimension $d$, and
        \item $H^{\dim Z} \leq r$.
    \end{itemize}

    \item If, additionally, 
    \begin{itemize}
        \item $\vol(A|_F) \leq v$, where $F$ is a general fiber of $f:X \to Z$,
    \end{itemize}
    then we call $f:((X,B),A) \to (Z,H)$ a \textit{(weak) $(d,v,r,\epsilon)$-polarized log Calabi--Yau  fibration}.

    \item Furthermore, if 
    \begin{itemize}
        \item the coefficients of $B$ belong to $\Phi$,
    \end{itemize}
    then we refer to $f:((X,B),A) \to (Z,H)$ as a \textit{(weak) $(d,\Phi,v,r,\epsilon)$-polarized log Calabi--Yau  fibration}.
\end{enumerate}

\end{definition}

\vspace{0.5cm}
{\textbf{\sffamily{Boundedness of polarized log Calabi--Yau fibration.}}} 
Birkar's work on Fano type fibrations \cite{birkarBoundednessFanoType2024}
can be viewed as a relative version of the BAB theorem
\cite{birkarSingularitiesLinearSystems2021}.
In a similar spirit, our results may be regarded as a relative analogue of
Birkar's boundedness theorem for polarized log Calabi--Yau pairs
\cite{birkarGeometryPolarisedVarieties2023}.

One of our main boundedness statements in codimension one for weak polarized
log Calabi--Yau fibrations concerns the case where the coefficients of $B$
belong to a finite set $\Phi$.
In particular, this includes the special case $B=0$, that is, $\Phi=\{0\}$,
which was first conjectured by Birkar and Hacon.

\begin{thm}\label{thm: finite coeff bdd in codim 1}
 Let $d\in \bN$, $v,r,\epsilon\in\mathbb{Q}^{>0}$, and $\Phi\subset[0,1]\cap \bQ$ be a finite set. 
Consider the set of all weak $(d,\Phi,v,r,\epsilon)$-polarized log Calabi--Yau  fibrations $f:((X,B),A)\to (Z,H)$. Then the set of such $(X,B+f^*H)$ is log bounded in codimension one.   
\end{thm}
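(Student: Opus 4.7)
The plan is to construct on each total space $X$ an integral nef and big divisor $M$ whose volume is bounded in terms of $d,v,r,\epsilon,\Phi$ and which satisfies that $M - (K_X + B + f^*H)$ is pseudo-effective, and then to invoke a standard codimension-one log boundedness criterion for $\epsilon$-lc pairs of bounded volume with coefficients in a finite set.

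First I would bound the base and the general fiber separately. Restricting $K_X + B \sim_{\bR} f^*N$ to a general fiber $F$ gives $K_F + B_F \sim_{\bR} 0$ where $B_F := B|_F$, so $(F, B_F)$ is an $\epsilon$-lc polarized log Calabi-Yau pair of dimension $d - \dim Z$ with coefficients in the finite set $\Phi$ and polarization $A|_F$ of volume $\le v$. By Birkar's boundedness of polarized log Calabi-Yau pairs \cite{birkarGeometryPolarisedVarieties2023}, the triples $(F, B_F, A|_F)$ form a log bounded family, yielding an integer $n_0 = n_0(d,v,\epsilon,\Phi)$ for which $n_0 A|_F$ is very ample and $h^0(F, n_0 A|_F)$ is bounded. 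The bases $(Z,H)$ already form a bounded family since $H$ is very ample with $H^{\dim Z} \le r$.

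Next I would take $M := n_0 A + m f^*H$ for an integer $m$ to be chosen. My hope is that combining the relative global generation from the fiber bound with an effective Castelnuovo--Mumford regularity estimate on the bounded family of bases will force $f_*\cO_X(n_0 A) \otimes \cO_Z(mH)$ to be globally generated for a uniform $m = m(d,v,r,\epsilon,\Phi)$, making $M$ globally generated—and therefore nef—away from a codimension-two locus. The projection formula then produces $M^d$ whose leading term $\binom{d}{\dim Z} n_0^{d-\dim Z} m^{\dim Z} \vol(A|_F) H^{\dim Z}$ is bounded by $\binom{d}{\dim Z} n_0^{d-\dim Z} m^{\dim Z} r v$, while the intermediate terms $n_0^{d-i} m^i (f^*H)^i \cdot A^{d-i}$ for $i < \dim Z$ are controlled from the nefness of $M$. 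Moreover, $M - (K_X + B + f^*H) = n_0 A + f^*((m-1)H - N)$, and since $(m-1)H - N = (m-2)H + (H-N)$ is pseudo-effective for $m \ge 2$ as a sum of a nef and a pseudo-effective divisor, this difference is pseudo-effective, so $\vol(K_X + B + f^*H) \le \vol(M)$ is bounded.

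Finally, $(X, B + f^*H)$ is $\epsilon$-lc with coefficients in a finite set (the finite $\Phi$ together with the bounded multiplicities of $f^*H$, bounded because $(Z,H)$ lies in a bounded family) and has volume controlled by the bounded $M$; a standard codimension-one log boundedness criterion (cf.\ \cite{haconACCLogCanonical2014, birkarBoundednessFanoType2024}) then yields the conclusion. The main obstacle is producing the bounded $m$ uniformly across the family: the sheaves $f_*\cO_X(n_0 A)$ on $Z$ have Chern classes not a priori controlled, so establishing uniform global generation after twisting by $mH$ requires a careful effective regularity argument, together with precise bookkeeping of the intermediate intersection numbers $A^{d-i} (f^*H)^i$ for $i < \dim Z$.
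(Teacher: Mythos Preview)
Your proposal has a genuine gap at its core, and several subsidiary claims are incorrect.

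\textbf{The central gap.} You identify the main obstacle yourself: producing a uniform $m$ so that $n_0 A + m f^*H$ is globally generated requires controlling the regularity of $f_*\cO_X(n_0 A)$ on $Z$, but the Chern classes of this sheaf are not a priori bounded. This is not a technical detail to be filled in---it is precisely the content of the theorem. The divisor $A$ is only ample \emph{over} $Z$, and one may add to $A$ the pullback of an arbitrary divisor on $Z$ without changing the hypotheses; hence there is no uniform control on $A$ globally, and no uniform $m$ exists by the route you suggest. The paper circumvents this by constructing a \emph{different} polarization $L$ on $X$, pulled back from a fine moduli space of polarized Calabi--Yau pairs and a maximal-variation family obtained via Ambro's theorem; this $L$ agrees numerically with a bounded multiple of $A$ fiberwise but is globally controlled by the bounded moduli map.

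\textbf{Subsidiary errors.} First, $K_X+B+f^*H \sim_{\bR} f^*(N+H)$ is pulled back from $Z$ and hence has volume zero whenever $\dim X>\dim Z$; bounding $\vol(K_X+B+f^*H)\le\vol(M)$ gives no information, and no ``standard criterion'' produces log boundedness in codimension one from a non-big log canonical divisor. Second, the coefficients of $f^*H$ are the fiber multiplicities of $f$, which are \emph{not} bounded by the boundedness of $(Z,H)$ alone; likewise $(X,B+f^*H)$ need not be $\epsilon$-lc. Third, since $A$ is only relatively ample it need not be nef on $X$, so the expansion of $M^d$ into non-negative summands fails and the intermediate intersection numbers $A^{d-i}(f^*H)^i$ are not controlled by $M^d$.

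The paper instead builds a birational model $(X',\Delta'),L'$ via weak semistable reduction and MMP on which $K_{X'}+\Delta'+\alpha L'$ is \emph{big} and lc with bounded volume (Theorem~\ref{volume is bounded}); this requires bounding the moduli map $Z\dashrightarrow \cS^*$ (Theorem~\ref{moduli map is bounded}) and a delicate descent of volumes through generically finite covers. Only then do the HMX birational boundedness results and MMP in families yield the conclusion.
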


For a more general version of this result, see Theorem \ref{thm:bdd of weak pcy fibration}. By combining Theorem \ref{thm: finite coeff bdd in codim 1} with the technique from \cite{birkarSingularitiesFanoFibrations2023}, we  establish the boundedness in codimension one for polarized log Calabi--Yau  fibrations with arbitrary real coefficients for $B$. However, we need to assume the ampleness of $H - N$ due to a technical reason.

\begin{thm}\label{mainthm1}
    Let $d \in\bN$ and $v,r,\epsilon,\delta\in \bR^{>0}$. Consider the set of all $(d, v,r,\epsilon)$-polarized log Calabi--Yau  fibrations $((X,B),A)\to (Z,H)$ and $\bR$-divisors $0\leq \Delta\leq B$ where the non-zero coefficients of $\Delta$ are greater than $\delta$. 
    Then the set of such $(X,\Delta+f^*H)$ is log bounded in codimension one.
\end{thm}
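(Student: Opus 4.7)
The plan is to reduce to the finite-coefficient boundedness of Theorem \ref{thm: finite coeff bdd in codim 1} via a real-to-rational perturbation, in the spirit of \cite{birkarSingularitiesFanoFibrations2023}. Write $B = \sum b_i B_i$ with real coefficients $b_i$ and set $\epsilon' = \epsilon/2$. The relation $K_X + B \sim_{\bR} f^* N$ is witnessed by finitely many principal divisors, and $H - N$ is ample with some fixed positive slack. A standard density argument then lets me express $B = \sum_{k=1}^m t_k B^{(k)}$ as a convex combination with positive rational weights $t_k$, where each $B^{(k)}$ is an effective $\bQ$-boundary with $\Supp(B^{(k)}) = \Supp(B)$, such that $(X, B^{(k)})$ is $\epsilon'$-lc, $K_X + B^{(k)} \sim_{\bQ} f^* N^{(k)}$ with $H - N^{(k)}$ ample, and the coefficients of $B^{(k)}$ lie in a fixed finite set $\Phi_0 \subset [0,1]\cap\bQ$ depending only on $d, v, r, \epsilon, \delta$.

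Granted this decomposition, each fibration $f:(X,B^{(k)}), A \to (Z,H)$ is a $(d, \Phi_0, v, r, \epsilon')$-polarized log Calabi-Yau fibration, so Theorem \ref{thm: finite coeff bdd in codim 1} places the pairs $(X, B^{(k)} + f^*H)$ in a single log bounded family in codim one. Since $m$ is uniformly bounded, combining these finitely many bounded families over a common finite-type base yields log boundedness in codim one for $(X, \Supp(B) + \Supp(f^*H))$. As $0 \leq \Delta \leq B$ gives $\Supp(\Delta) \subseteq \Supp(B)$, the pair $(X, \Delta + f^*H)$ fits inside the same bounded family, which is the desired conclusion.

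\textbf{Main obstacle.} The crux lies in Step 1: the finite set $\Phi_0$, the number $m$ of boundaries in the convex decomposition, and a common bound on the denominators of the $t_k$ must depend only on $(d, v, r, \epsilon, \delta)$, not on the individual fibration. Two uniform estimates are needed. First, one must bound the number of components of $\Delta$, which follows from the coefficient lower bound $\delta$ together with intersection-theoretic degree bounds derived from $H^{\dim Z}\leq r$ and $\vol(A|_F)\leq v$ after passing to an initially bounded birational model supplied by Theorem \ref{thm: finite coeff bdd in codim 1}. Second, one needs a uniform open neighborhood of the class $[N]$ inside the ample cone of $Z$, so that rational perturbations $N^{(k)}$ of uniformly bounded denominator satisfy $H - N^{(k)}$ ample; this again uses that $Z$ is already bounded. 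The role of $\delta$ is essential here: without it, $\Delta$ could have arbitrarily many or arbitrarily small components, which would destroy uniform control of the rational perturbation.
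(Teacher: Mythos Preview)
Your perturbation strategy has a genuine gap. The relation $K_X+B\sim_{\bR}f^*N$ involves $B$, not $\Delta$, so it is $B$ that must be decomposed as $\sum t_k B^{(k)}$ with each $B^{(k)}$ having coefficients in a fixed finite set $\Phi_0$. But the hypotheses place no restriction on the coefficients of $B$: its components can be arbitrarily numerous with arbitrarily small coefficients (for instance, take many vertical components over distinct prime divisors of $Z$, each with a tiny coefficient, keeping $H-N$ ample). No uniform finite $\Phi_0$ can accommodate all such $B$. The lower bound $\delta$ constrains only $\Delta$, and bounding the components of $\Delta$ does not help you rationalize $B$ while preserving the Calabi--Yau condition. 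Your appeal to ``an initially bounded birational model supplied by Theorem \ref{thm: finite coeff bdd in codim 1}'' is also circular: that theorem already requires the coefficients of $B$ to lie in a finite set.

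The paper avoids this entirely by a structural reduction rather than perturbation. It proceeds by induction on $d$: if the horizontal part $B^h\neq 0$, one runs an MMP on $K_X+tA$ (for the pseudo-effective threshold $t$) to produce a contraction $X'\to Y$ with $-K_{X'}$ big over $Y$; the base $Y$ then carries a polarized log Calabi--Yau fibration of strictly smaller dimension (Proposition \ref{nonzero horizontal part}), and one uses boundedness of Fano type fibrations (Theorem \ref{FTF}), which \emph{does} allow arbitrary real coefficients, to transfer boundedness from $Y$ back to $X$ via Lemma \ref{induction method}. If $B^h=0$, then after a bounded $\bQ$-factorialization of $Z$ and an MMP contracting very exceptional divisors, the vertical $B$ becomes the pullback of a divisor on the base and can be absorbed into $N$, reducing to Theorem \ref{thm: finite coeff bdd in codim 1} with $\Phi=\{0\}$. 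In short, the arbitrary-coefficient case is handled by eliminating $B$ geometrically, not by approximating it.
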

\begin{ques}\label{question}
With the same notation as Theorem \ref{thm: finite coeff bdd in codim 1}, is the set of such pairs $(X, B +  f^* H)$ log bounded?
\end{ques}
When $\dim Z=1$,  boundedness is studied in \cite{hashizumeBoundednessModuliSpaces2023}.  For  $\bQ$-factorial terminal minimal threefolds of Kodaira dimension two, \cite{filipazziBoundednessEllipticCalabi2025} derives boundedness from boundedness in codimension one by studying the Kawamata--Morrison cone conjecture and the liftability of flops.
In this paper, we propose an alternative approach.
Under the additional condition that $\Supp R^1f_*\cO_X\subsetneq Z$, we  obtain the actual boundedness of $(d,\Phi,v,r,\epsilon)$-polarized log Calabi--Yau fibrations.  
\begin{thm}\label{mainthm2}
Let $d \in\bN$,  $v,r,\epsilon\in \bQ^{> 0}$, and $\Phi\subset[0,1]\cap \bQ$ be a finite set. Consider the set of all weak $(d,\Phi,v,r,\epsilon)$-polarized log Calabi--Yau  fibrations $f:((X,B),A)\to (Z,H)$ such that  $\Supp R^1f_*\cO_X\subsetneq Z$. Then the set of such $(X,B+f^*H)$ is log bounded.
\end{thm}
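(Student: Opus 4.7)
The plan is to upgrade Theorem \ref{thm: finite coeff bdd in codim 1} from boundedness in codimension one to full log boundedness, using the extra hypothesis $\Supp R^1 f_*\cO_X\subsetneq Z$ to eliminate the ambiguity contributed by small birational modifications. First I would apply Theorem \ref{thm: finite coeff bdd in codim 1} to obtain a bounded family $(\mathcal X,\mathcal D)\to T$ of pairs such that, for each $(X,B+f^*H)$ in our set, there exist $t\in T$ and a small birational map $\phi\colon X\dashrightarrow \mathcal X_t$ matching $B+f^*H$ with $\mathcal D_t$. Note that $(Z,H)$ itself is log bounded, since $H$ is very ample with $H^{\dim Z}\leq r$, so by Noetherian induction $T$ may be refined so that the family is equipped with compatible fibrations $\mathcal X\to\mathcal Z\to T$ and a relative polarization $\mathcal A$ extending the data $(f,A,H)$ on each member.

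The next step is to exploit the hypothesis $\Supp R^1 f_*\cO_X\subsetneq Z$. Cohomology and base change gives $h^1(\cO_F)=0$ for a general fiber $F$ of $f$, and this irregularity vanishing has two consequences I would use: (i) the relative $\Pic^0(X/Z)$ is trivial on a dense open of $Z$, so the relative N\'eron--Severi group is discrete of bounded rank; and (ii) in the canonical bundle formula $K_X+B\sim_{\bQ} f^*(K_Z+B_Z+M_Z)$, the moduli part $M_Z$ is $b$-semi-ample and descends from a bounded family of polarized fiber moduli, which is precisely the setting in which the Prokhorov--Shokurov conjecture is known. Combining (i) and (ii) with the boundedness of the base pair $(Z,H)$, the finite coefficient set $\Phi$, and $\vol(A|_F)\leq v$, the target is to show that $A+mf^*H$ is ample with bounded volume for a uniform integer $m$ depending only on $(d,\Phi,v,r,\epsilon)$; this will give an embedding of $X$ into a projective space of bounded degree, and hence log boundedness of $(X,B+f^*H)$.

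The main obstacle is precisely the passage from codimension-one boundedness to actual boundedness without recourse to the Kawamata--Morrison cone conjecture. A priori, $\phi$ may be a non-trivial composition of flops over $Z$, and the core difficulty is to show that, under the irregularity vanishing $h^1(\cO_F)=0$, such flops are rigid in a uniform sense across the family---equivalently, that the relative movable cone of $\mathcal X_t$ over $\mathcal Z_t$ admits only finitely many chambers supporting the polarization $\mathcal A$, with the number bounded uniformly in $T$. Handling this, together with the compatibility between the discrete relative N\'eron--Severi group and $\mathcal A$, will require the detailed geometry of Calabi--Yau fibrations with vanishing fiber irregularity, and is the step where the contrast with the more flexible setting of Theorem \ref{thm: finite coeff bdd in codim 1} becomes decisive.
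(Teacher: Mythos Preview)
Your proposal has a genuine gap: you invoke only Theorem \ref{thm: finite coeff bdd in codim 1}, but the paper's proof relies on the stronger Theorem \ref{thm:bdd of weak pcy fibration}, whose essential extra output is the auxiliary effective divisor $J$ on the bounded model $V$ with $J\equiv lA_V$ over the generic point of $Z$ and with $\Supp(J)$ bounded. The hypothesis $\Supp R^1f_*\cO_X\subsetneq Z$ is used exactly once, to upgrade this numerical equivalence to $J\sim_\bQ lA_V$ over $\eta_Z$ (since $h^1(\cO_F)=0$ forces $\Pic^0(F)=0$). From there the argument is: by Lemma \ref{approximate unbounded polarization via bounded polarization} one writes $lA_V\sim_\bQ J+\sum a_iD_i$ over $Z$ with the $D_i$ vertical divisors of \emph{bounded support} but \emph{unbounded coefficients} $a_i$; then Lemma \ref{finiteness of canonical models} shows that as the $a_i$ range over the relevant polytope there are only finitely many log canonical models of $(V,B_V+\mu(J+\sum a_iD_i))$ over $Z$, and $X$ is recovered as one of these (it is the ample model of $A_V$ over $Z$). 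Running this in the bounded family $\cV\to\cS$ gives log boundedness.

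Your outline misses this mechanism entirely. Your point (ii) about $b$-semi-ampleness of the moduli part and the Prokhorov--Shokurov conjecture is not used and does not help here. Your stated target, proving that $A+mf^*H$ is ample on $X$ with bounded volume, is not justified: nothing in the hypotheses bounds $\vol(A+mf^*H)$ from above, since $A$ is only specified up to adding arbitrary vertical divisors. Most importantly, your ``main obstacle'' --- bounding the number of chambers of the relative movable cone, i.e.\ a Kawamata--Morrison-type statement --- is precisely the route the paper \emph{avoids}; the introduction says so explicitly. The replacement is the polytope-finiteness Lemma \ref{finiteness of canonical models}, which needs no cone conjecture because the varying divisors $D_i$ are vertical over $Z$ and hence the corresponding log canonical divisors are either big over $Z$ or $\bQ$-trivial over $\eta_Z$, so minimal and canonical models exist at every point of the polytope by \cite{birkarExistenceMinimalModels2010} and \cite{haconExistenceLogCanonical2013}.
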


\vspace{0.5cm}
{\textbf{\sffamily{Boundedness of stable minimal models and fibered Calabi--Yau varieties.}}} 
We now apply these general boundedness results to some special cases of  polarized log Calabi--Yau fibrations. First, we consider the case where $K_X + B$ is semi-ample.
It turns out that under some natural conditions, we can choose $H = lN$ for some bounded positive integer $l >1$. Then $H - N$ is automatically ample.
In this case, $((X, B), A)$ is a so-called \textit{stable minimal model} \cite{birkarModuliAlgebraicVarieties2022, jiangBoundednessModuliTraditional2023,zhuBoundednessStableMinimal2025}.
Jiao \cite{jiaoBoundednessPolarizedCalabiYau2022} shows that $(X,B)$ is crepant birationally bounded. When $\dim F=1$, the log boundedness in codimension one of $(X,B)$ follows from \cite{filipazziBoundednessNfoldsKappa2024}.

\begin{cor}\label{boundedness of klt stable minimal models}
Let $d\in \bN$, $u,v\in \bQ^{>0}$, and $\Phi\subset[0,1]\cap \bQ$ be a DCC set. Consider the set of $((X,B),A)$ such that
\begin{enumerate}
    \item $(X,B)$ is a projective klt pair of dimension $d$,
    \item the coefficients of $B$ are in $\Phi$,
    \item $K_X+B$ is semi-ample defining a contraction $f: (X,B)\to Z$,
    \item $\Ivol(K_X+B)=u$,
    \item  $A$ is  an integral divisor on  $X$ that is ample over $Z$, and $\vol(A|_F)\leq v$, where $F$ is the general fiber of $f: X\to Z$.
\end{enumerate}
 Then the set of such $(X,B)$ is log bounded in codimension one. 
 Moreover, if 
 $\Supp R^1f_*\cO_X\subsetneq Z$, then the set of such $(X,B)$ forms a log bounded family.
\end{cor}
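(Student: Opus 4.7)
The plan is to cast the given data as a polarized log Calabi-Yau fibration meeting the hypotheses of Theorems~\ref{mainthm1} and \ref{mainthm2} by constructing a polarization $H$ on the base $Z$ with controlled invariants. Since $K_X+B$ is semi-ample with contraction $f:X\to Z$, there is an ample $\bR$-divisor $N$ on $Z$ with $K_X+B\sim_\bR f^*N$ and $N^{\dim Z}=\Ivol(K_X+B)=u$. Restricting to a general fiber gives $(K_F+B|_F)\sim_\bR 0$, so $(F,B|_F)$ with polarization $A|_F$ is a polarized klt log Calabi-Yau pair with $\vol(A|_F)\leq v$ and DCC coefficients; by Birkar's boundedness of polarized klt log Calabi-Yau pairs \cite{birkarGeometryPolarisedVarieties2023}, the general fibers form a bounded family.

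Next, I would bound the base. By the canonical bundle formula, $N\sim_\bQ K_Z+B_Z+M_Z$ with $B_Z$ having DCC coefficients and $M_Z$ a b-nef moduli part; boundedness of the general fiber ensures that $M_Z$ can be taken with uniform Cartier index. Combined with $(K_Z+B_Z+M_Z)^{\dim Z}=u$, the boundedness theorem for canonically polarized log pairs \cite{haconBoundednessModuliVarieties2018} implies that $(Z,B_Z+M_Z)$ lies in a bounded family. Hence we may fix a uniform integer $l\geq 2$ such that $H:=l(K_Z+B_Z+M_Z)$ is very ample, $H-N$ is ample, and $H^{\dim Z}\leq r$ for some fixed $r$. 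Moreover, ACC for log canonical thresholds \cite{haconACCLogCanonical2014} applied over this bounded family of bases together with the bounded general fibers yields a uniform $\epsilon>0$ with $(X,B)$ being $\epsilon$-lc.

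With these invariants in place, $f:(X,B),A\to(Z,H)$ is a $(d,v,r,\epsilon)$-polarized log Calabi-Yau fibration whose boundary coefficients lie in the DCC set $\Phi$. Setting $\Delta:=B$ with $\delta:=\min(\Phi\setminus\{0\})>0$ (which exists by DCC), Theorem~\ref{mainthm1} gives log boundedness in codimension one of $(X,B+f^*H)$, and hence of $(X,B)$. For the refined assertion, the additional hypothesis $\Supp R^1f_*\cO_X\subsetneq Z$ together with a reduction of DCC coefficients to a finite set (via a standard perturbation and global ACC argument) lets us apply Theorem~\ref{mainthm2} to obtain actual log boundedness of $(X,B)$.

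The main obstacle will be executing the second step: bounding the base $Z$ together with the Cartier index of the moduli part $M_Z$, which demands a careful interplay between the canonical bundle formula, Birkar's polarized Calabi-Yau boundedness, and HMX-type results. Extracting a uniform $\epsilon$-lc constant and the DCC-to-finite reduction in the refined statement are secondary, but also nontrivial, technical hurdles.
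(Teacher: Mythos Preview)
Your approach matches the paper's: establish the canonical bundle formula, bound the base $(Z,B_Z+M_Z)$, extract a uniform $\epsilon$-lc constant, set $H=l(K_Z+B_Z+M_Z)$ for a bounded integer $l$, and then invoke Theorems~\ref{mainthm1} and~\ref{mainthm2}. One correction: $(Z,B_Z+M_Z)$ is a \emph{generalized} klt pair with nontrivial nef moduli part, so the relevant boundedness input is Birkar's boundedness of generalized pairs \cite{birkarBoundednessVolumeGeneralised2021} (specifically, $(Z,B_Z+M_Z)\in\cF_{gklt}(\dim Z,\Psi,u)$), not \cite{haconBoundednessModuliVarieties2018}; the paper likewise pulls the uniform $\epsilon$-lc bound and the Cartier index of $M_{Z'}$ directly from lemmas in \cite{birkarBoundednessVolumeGeneralised2021} rather than via your more indirect route through fiber boundedness and ACC. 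The DCC-to-finite reduction you flag as needed for Theorem~\ref{mainthm2} is also left implicit in the paper's proof.
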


Next we consider another important case of polarized log Calabi--Yau fibration, where the total space is a Calabi--Yau variety. Such a fibration is called a \textit{fibered Calabi--Yau variety}. In this case, we have $N \sim_{\bQ} 0$, so $H - N$ is automatically ample. Furthermore, we assume that the base $Z$ is rationally connected. Note that if $X$ is a strict Calabi--Yau manifold, then by \cite[Corollary 5.1]{birkarBoundednessEllipticCalabiYau2024}, this condition is automatically satisfied.
\begin{cor}\label{cor: bdd of fibered cy}
   Let $d\in\bN$ and $\epsilon,v\in\bR^{>0}$. Assume that
\begin{enumerate}
\item $(X,B)$ is a projective $\epsilon$-lc pair of dimension $d$,
\item $K_X+B\sim_{\bR}0$,
\item $f: X \to Z$ is a contraction to a rationally connected variety $Z$, and
\item $A$ is an integral divisor on $X$ such that $0<\vol(A|_F) \leq v$, where $F$ is the general fiber of $f:X\to Z$.
\end{enumerate}
Then the set of such $X$ is bounded in codimension one.
\end{cor}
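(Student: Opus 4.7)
The plan is to reduce to Theorem \ref{mainthm1} with $\Delta=0$. For this I need to supply a very ample polarization $H$ on $Z$ with $H^{\dim Z}\le r$ for some uniform $r$, upgrade $A$ to a relatively ample divisor, and verify the remaining conditions of a $(d,v,r,\epsilon)$-polarized log Calabi--Yau fibration. Since $K_X+B\sim_{\bR}0$ and $f_*\cO_X=\cO_Z$, we may take $N\sim_{\bR}0$, so $H-N$ is ample for any ample $H$; only the bound on $H^{\dim Z}$ remains, which amounts to bounding $Z$ itself.

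To bound $Z$, I would apply the canonical bundle formula to $f$ to obtain a generalized pair structure $(Z,B_Z+M_Z)$ with $K_Z+B_Z+M_Z\sim_{\bR}0$. Standard discriminant estimates for $\epsilon$-lc Calabi--Yau fibrations give that $(Z,B_Z+M_Z)$ is generalized $\epsilon'$-lc for some $\epsilon'=\epsilon'(d,\epsilon)>0$. Combined with the rational connectedness of $Z$ and the fact that $-K_Z\sim_{\bR}B_Z+M_Z$ with $B_Z\ge 0$ and $M_Z$ nef, $Z$ is of Fano type; generalized BAB then shows that such $Z$ form a bounded family, producing a uniform very ample $H$ on $Z$ with $H^{\dim Z}\le r$ for some $r=r(d,\epsilon)>0$.

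For the fiber polarization, a general fiber $(F,B|_F)$ is an $\epsilon$-lc Calabi--Yau pair of dimension $\le d$ polarized by the big integral divisor $A|_F$ of volume at most $v$. By Birkar's boundedness theorem for polarized Calabi--Yau pairs \cite{birkarGeometryPolarisedVarieties2023}, the triples $(F,B|_F,A|_F)$ lie in a bounded family, so there is a uniform integer $m=m(d,v,\epsilon)$ such that $mA|_F$ is very ample on every such $F$. Spreading out, $mA$ may be taken relatively very ample on $X$, with relative volume bounded by $v'=m^{d-\dim Z}v$. The data $f:(X,B),mA\to(Z,H)$ then fits a $(d,v',r,\epsilon)$-polarized log Calabi--Yau fibration (with no constraint on the coefficients of $B$), and Theorem \ref{mainthm1} applied with $\Delta=0$ yields log boundedness of $(X,f^*H)$ in codimension one. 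Forgetting $f^*H$ we conclude that $X$ is bounded in codimension one.

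The hardest part will be establishing the boundedness of $Z$. Although the canonical bundle formula, the rational connectedness hypothesis, and generalized BAB are each available in the literature, combining them uniformly in the $\bR$-coefficient setting requires careful control of both the discriminant part $B_Z$ (to ensure $\epsilon'$-lc singularities of the generalized pair) and the moduli part $M_Z$ (to exploit nefness against the rational connectedness). This is the main technical bottleneck of the argument, and is exactly where the assumption that $Z$ be rationally connected enters crucially.
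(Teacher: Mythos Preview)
Your overall strategy---reduce to Theorem \ref{mainthm1} by bounding the base and arranging a relatively ample polarization---matches the paper's. However, two steps in your execution contain genuine gaps.

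First, your claim that $Z$ is of Fano type (and hence bounded by BAB) is not justified. From $K_Z+B_Z+M_Z\sim_{\bR}0$ you only get that $-K_Z\sim_{\bR}B_Z+M_Z$ is pseudo-effective, not big; rational connectedness alone does not force bigness. The correct input is \cite[Theorem 1.7]{birkarSingularitiesFanoFibrations2023}, which shows that a rationally connected generalized $\delta$-lc Calabi--Yau pair is bounded \emph{only in codimension one}: there is a bounded $Z'$ with $Z'\dashrightarrow Z$ small and a very ample $H'$ on $Z'$ with $H'^{\dim Z'}\le r$. There is in general no bounded very ample divisor on $Z$ itself, so one must pass to $Z'$. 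The paper then invokes \cite[Propositions 3.6, 3.7]{birkarBoundednessEllipticCalabiYau2024} to replace $(X,B)$ by a small modification $(X',B')$ that actually fibres over $Z'$.

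Second, your ``spreading out'' step is false as stated: knowing that $mA|_F$ is very ample on every general fibre does \emph{not} imply that $mA$ is ample over $Z$. Relative ampleness is a global condition, and $A$ can fail it via vertical components or non-nefness along special fibres. The paper handles this by taking the ample model $X''$ of $A'$ over $Z'$, a genuine birational contraction $X'\dashrightarrow X''$, and then applies Theorem \ref{mainthm1} to $f'':(X'',B''),A''\to(Z',H')$. Since $X\dashrightarrow X''$ is only a birational contraction (not small), a final step---\cite[Corollary 2.13]{birkarBoundednessEllipticCalabiYau2024}---is needed to transport boundedness in codimension one back from $X''$ to $X$. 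None of these model changes appear in your proposal.
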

By \cite{birkarSingularitiesFanoFibrations2023}, we do not need to assume the boundedness of the torsion index of $K_X+B$.
If $\dim F = 1$, this case is studied in \cite[Theorem 1.4]{birkarBoundednessEllipticCalabiYau2024}. 
The case where $X$ has terminal singularities is treated in \cite[Theorem 8.2]{jiaoBoundednessPolarizedCalabiYau2022}.

\vspace{0.5cm}
{\textbf{\sffamily{Sketch of proof.}}} 
 We sketch the proofs of our main theorems, starting with Theorem \ref{thm: finite coeff bdd in codim 1}.
Given a $(d,\Phi,v,r,\epsilon)$-polarized log Calabi--Yau fibration $f:((X,B),A) \to (Z,H)$, note that the base $Z$ is bounded by assumption and the general fiber $((F,B_F),A_F)$ is bounded by \cite[Corollary 1.6]{birkarGeometryPolarisedVarieties2023}. We study the induced rational map from $Z$ to a ``moduli space'' of the general fibers. For this purpose, we use the strongly embedded fine moduli space $\cS$ of polarized log Calabi--Yau pairs constructed in \cite{birkarModuliAlgebraicVarieties2022,birkarGeometryPolarisedVarieties2023}. Since $\cS$ also parametrizes the polarizations, the universal family $(\cX,\cB)\to \cS$ is not necessarily of maximal variation, and the rational map $Z\dashrightarrow \cS$ is not necessarily bounded. Applying \cite{ambroModuliBdivisorLctrivial2005}, we obtain a new family $(\cX^!,\cB^!)\to \cS^!$ of maximal variation with $\mathbf{b}$-nef and big moduli part $\bm{\cM}^!$. Therefore, since the moduli part $\M_Z$ of $f:(X,B)\to Z$ is controlled by $H$, a volume argument shows that, up to a generically finite cover, the map $Z \dashrightarrow \cS^!$ is bounded; see Theorem~\ref{moduli map is bounded}.

The traditional strategy for proving boundedness of polarized fibrations is to modify the vertical part of the polarization $A$ so as to obtain a global ample divisor on $X$ with bounded volume; see \cite{birkarModuliAlgebraicVarieties2022,jiangBoundednessModuliTraditional2023,birkarBoundednessFanoType2024,zhuBoundednessStableMinimal2025,hashizumeBoundednessModuliSpaces2023}. However, this approach fails in our setting.
Instead, we construct a new polarization $L$ arising from the family $(\cX^!,\cB^!) \to \cS^!$ such that $L \equiv mA$ over the generic point of $Z$ for some fixed $m\in \bN$. More precisely,
we define a polarization $\cL$ on $(\cX,\cB) \to \cS$ by pulling back $\cA^!$ to a Galois cover of $\cX$, taking the Galois sum, and then descending it to $\cX$. For every $s \in \cS$, we have $\cL_s \equiv m\cA_s$; see Theorem~\ref{diagram of the universal Calabi--Yau fibration}(6). Finally, we define $L$ as the closure of the pullback of $\cL$ via the moduli map $Z \dashrightarrow \cS$.
Since $L$ arises from the fixed family $((\cX^!,\cB^!), \cA^!) \to \cS^!$ and the map $Z \dashrightarrow \cS^!$ is bounded up to a generically finite cover, by the invariance of plurigenera and an argument about the descent of volume from a generically finite cover, we can show that, after modifying the vertical part of $((X,B), L) \to Z$, the volume of $L$ can be controlled on a suitable birational model; 
see Lemma~\ref{volume is bounded up to a finite cover} and Theorem~\ref{volume is bounded}.

To proceed, we apply weak semistable reduction \cite{abramovichWeakSemistableReduction2000} and the minimal model program for lc pairs \cite{haconExistenceLogCanonical2013} in Theorem \ref{special model of a polarized log Calabi--Yau fibration} to obtain a new birational model $((X',\Delta'),L')\to Z$ of $((X,B),L)\to Z$ satisfying that
\begin{enumerate}
    \item $(X',\Delta'+\alpha L')$ is lc for some fixed positive real number $\alpha$, 
    \item $K_{X'}+\Delta'+\alpha L'$ is big, and
    \item $\vol(K_{X'} + \Delta' + \alpha L')$ is bounded from above.
\end{enumerate}
Then, we apply \cite{haconBirationalAutomorphismsVarieties2013,haconACCLogCanonical2014} to obtain the log birational boundedness of $(X',\Delta')$. 
Moreover, we can deduce that $\Supp(\Delta')$ contains both the strict transform of $\Supp(B)$ on $X'$ and the exceptional divisors over $X$. We then apply the MMP in family \cite{haconBoundednessModuliVarieties2018} to bound $(X,B)$ in codimension one.

Regarding the proof of Theorem \ref{mainthm1}, we consider two cases. If the horizontal part $B^\h$ of $B$ is nonzero, then $K_X$ is not pseudo-effective over $Z$. Let $t$ be the pseudo-effective threshold of $A$ with respect to $K_X$ over $Z$. By the argument as in \cite[Theorem 10.1]{birkarSingularitiesFanoFibrations2023}, we can run an MMP on $K_X+tA$ over $Z$ to decompose the new fibration into a Fano type fibration and a lower-dimensional polarized Calabi--Yau fibration. We then proceed by applying the boundedness of Fano type fibrations \cite{birkarBoundednessFanoType2024} and induction. If $B^\h=0$, then $B$ is vertical over $Z$. After reducing to the case where $Z$ is $\bQ$-factorial and modifying $B$ to be a very exceptional divisor over $Z$, we run an MMP on $K_X+B$ over $Z$ by \cite{birkarExistenceLogCanonical2012} to contract all components of $\Supp(B)$. This case then follows directly from Theorem \ref{thm: finite coeff bdd in codim 1} with $\Phi=\{0\}$.

Now we turn to the sketch of the proof of Theorem \ref{mainthm2}. The polarization $L$ constructed in Theorem \ref{thm: finite coeff bdd in codim 1} satisfies $L \equiv mA$ over the generic point of $Z$ for some fixed $m\in \bN$, and one might hope to modify its vertical part to extend this equivalence over all of $Z$. However, \cite[Example 5.1]{xieextensionofnumericallytrivial} shows that this is not possible in general.
Nonetheless, under the assumption that $\Supp R^1f_*\cO_X\subsetneq Z$, we may assume that $L \sim_\bQ mA$ over the generic point of $Z$. Then we construct a new polarization $J$ such that $J \sim_\bQ mA$ over $Z$ and the components of $\Supp(J-L)$ can be controlled uniformly, see Lemma \ref{approximate unbounded polarization via bounded polarization}. The remaining difficulty is that the components of $\Supp(J-L)$ are vertical over $Z$, hence non-big over $Z$, and the coefficients appearing in $J-L$ are uncontrolled. To address this issue, we study the finiteness of log canonical models where the boundary divisors vary in a polytope whose boundary contains non-big divisors, see Lemma \ref{finiteness of canonical models}. Finally, by the relative ampleness of $A$ over $Z$ and a standard argument of running an MMP in a family \cite{haconBoundednessModuliVarieties2018}, we establish the pure boundedness of $(X,B)$ from boundedness in codimension one.

\vspace{0.5cm}
{\textbf{\sffamily{Structure of the paper.}}} 
 This paper is organized as follows. In \S\ref{sec:Preliminaries} we recall some definitions and preliminary results. In \S\ref{sec:Polarized log Calabi--Yau fibrations: finite coefficients}, we prove Theorem \ref{thm: finite coeff bdd in codim 1}, which establishes boundedness for fibrations with finite coefficient sets. In \S\ref{sec:Polarized log Calabi--Yau fibrations: arbitrary coefficients}, we extend the argument to arbitrary coefficients and prove Theorem \ref{mainthm1}. In \S\ref{sec:Fibrations whose general fibers have vanishing irregularity}, we focus on fibrations whose general fibers have vanishing irregularity and prove Theorem \ref{mainthm2}. Finally, in \S\ref{sec:Stable minimal models and fibered Calabi--Yau varieties}, we deduce Corollaries \ref{boundedness of klt stable minimal models} and \ref{cor: bdd of fibered cy} as consequences of our main results.

\vspace{0.5cm}
{\textbf{\sffamily{Acknowledgement.}}} 
The first author carried out this work during his visit to the University of Utah and thanks Christopher D. Hacon for his hospitality. Both the first and second authors thank  Caucher Birkar for his constant support and helpful comments, as well as Christopher D. Hacon for helpful discussions. The third author expresses his gratitude to his Ph.D. advisor Meng Chen for great support and encouragement. He would also like to thank Chen Jiang, Fanjun Meng, and Lingyao Xie for very effective discussions about Section 5.  We would like to thank the anonymous referee for the numerous suggestions and corrections that improved this article considerably.

The second author is supported by National Key Research and Development Program of China (2025YFA1018100). The third author is supported by a KIAS Individual Grant (MG106901) at Korea Institute for Advanced Study.

\section{Preliminaries}\label{sec:Preliminaries}

\subsection{Notations and conventions}
	We collect some notations and conventions used in this paper.
	\begin{enumerate}

\item A projective morphism $f:X\to Z$ between normal quasi-projective varieties is called a \textit{contraction} if $f_*\cO_X=\cO_Z$. In particular, $f$ is surjective with connected fibers.

\item For a fibration  $f:X\to Z$, we use $X_{\eta}$ to denote the generic fiber of $f$ and $X_g$ to denote the general fiber of $f$. For an $\bR$-divisor $B$ on $X$, we write $B_{\eta}:=B|_{X_{\eta}}$ and $B_g:=B|_{X_g}$.
\item  Let $f : X \to Z$ be a morphism between  normal quasi-projective varieties, and let $M$ and $L$ be $\bR$-Cartier $\bR$-divisors on $X$. We say $M\sim_Z L$ (resp. $M \sim_{\bQ,Z}  L$, $M \sim_{\bR,Z}  L$) if there is a Cartier (resp. $\bQ$-Cartier, $\bR$-Cartier )  divisor $N$ on $Z$ such that $M - L \sim f^*N$ (resp.  $M - L \sim_{\bQ} f^*N$,  $M - L \sim_{\bR} f^*N$). 

\item  Let $X$ be a normal quasi-projective variety, and let $M$ be an $\bR$-divisor on $X$. Write $M=\sum m_i M_i$, where $M_i$ are the distinct irreducible components. We define $M_{\ge a}:=\sum_{m_i\ge a} m_i M_i$, $M_{\le a}:=\sum_{m_i\le a} m_i M_i$, $M_{> a}:=\sum_{m_i> a} m_i M_i$, and $M_{< a}:=\sum_{m_i< a} m_i M_i$.

\item Let $f : X \to Z$ be a morphism between normal quasi-projective varieties, and let $D$ be a $\bR$-divisor on $X$. We say $D$ is \textit{horizontal} over $Z$ if the induced map $\Supp D \to Z$ is dominant, otherwise we say $D$ is \textit{vertical} over $Z$. Given an $\bR$-divisor $D$ on $X$, there is a unique decomposition $D=D^\h+D^\vv$ such that 
	\begin{itemize}
		\item$\Supp D^\h$, $\Supp D^\vv$ have no common components,
		\item every component of $\Supp D^\h$ is horizontal over $Z$, and
		\item $D^\vv$ is vertical over $Z$.
	\end{itemize}
	We call $D^\h$ the \textit{horizontal part} of $D$ and $D^\vv$ the \textit{vertical part} of $D$ with respect to $f:X\to Z$.

\item We say that a set  $\Phi \subset\bR$ satisfies the \textit{descending chain condition}  (DCC, for short) if $\Phi$  does not contain any strictly decreasing infinite sequence. Similarly, we say that a set  $\Phi \subset\bR$  satisfies the \textit{ascending chain condition}  (ACC, for short) if $\Phi$ does not contain any strictly increasing infinite sequence.

\item Let $X$ be a normal  projective variety of dimension $d$, and let $D$ be a $\bQ$-divisor on $X$ such that the Iitaka dimension $\kappa(D)$ is non-negative. The \textit{Iitaka volume} of $D$, denoted by $\Ivol(D)$, is defined as  
\[
\Ivol(D)=\limsup_{m\to \infty}\frac{\kappa(D)!h^0(X,\cO_X(\lfloor mD\rfloor))}{m^{\kappa(D)}}.
\]
When $D$ is big, this is also called the \textit{volume} of $D$, denoted by $\vol(D)$. If $D$ is semi-ample and defines a contraction $f : X \to Z$ such that $D \sim_{\bQ} f^*H$ for some ample $\bQ$-divisor $H$ on $Z$, then 
$\Ivol(D) =\vol(H)= H^{\dim Z}$.
\end{enumerate}

\subsection{Pairs and singularities}

A \textit{sub-pair} $(X,B)$ consists of a normal quasi-projective variety $X$ and an $\bR$-divisor $B$ on $X$ such that $K_X+B$ is $\bR$-Cartier. If, in addition, $B \ge 0$, then $(X,B)$ is called a \textit{pair}.

Let $D$ be a prime divisor over $X$. Let $\pi \colon X' \to X$ be a log resolution of $(X,B)$ such that $D$ is a prime divisor on $X'$. We can write
\[
K_{X'} + B' = \pi^*(K_X + B).
\]
The \textit{log discrepancy} of $D$ with respect to $(X,B)$ is defined by
\[
a(D,X,B) := 1 - \mult_D B'.
\]

We say that $(X,B)$ is \textit{sub-klt} (resp. \textit{sub-lc}, \textit{sub-$\epsilon$-lc}) if $a(D,X,B) > 0$ (resp. $a(D,X,B) \ge 0$, $a(D,X,B) \ge \epsilon$) for every prime divisor $D$ over $X$. If $(X,B)$ is a pair, we drop the prefix ``sub'' and say that $(X,B)$ is \textit{klt} (resp. \textit{lc}, \textit{$\epsilon$-lc}).

A \textit{non-klt place} of $(X,B)$ is a prime divisor $D$ over $X$, that is, a prime divisor on some birational model of $X$, such that
$a(D,X,B) \le 0$.
A \textit{non-klt center} of $(X,B)$ is the image of a non-klt place on $X$.

A \textit{log place} of $(X,B)$ is a prime divisor $D$ over $X$ such that
$a(D,X,B) \in [0,1)$.
A \textit{log center} of $(X,B)$ is the image of a log place on $X$.

\subsection{Minimal models}

 Suppose that $f : X \to Z$ and $f ^m : X ^m\to Z$ are projective morphisms,  $\phi:X\dashrightarrow X^m$ is a birational map over $Z$ which does not extract any divisor, and $(X,B)$ and $(X^m,B^m)$ are lc pairs, where $B^m=\phi_*B$. If $a(E,X,B) > a(E,X^m,B^m )$ (resp. $a(E,X,B) \geq a(E,X^m,B^m )$) for all prime $\phi$-exceptional divisors $E \subset X$, $X^ m$ is $\bQ$-factorial, and $K_{X^m}+B^m$ is nef over $Z$, then we say that $\phi:X\dashrightarrow X^m$  is a \textit{minimal model} (resp. \textit{weak log canonical model})  of $(X,B)$ over $Z$. 
 
  A minimal model (resp. weak log canonical model) $\phi:X\dashrightarrow X^m$  of $(X,  B)$ over $Z$ is called a \textit{good minimal model} (resp. \textit{semi-ample model}) if $K_{X^m}+B^m$ is semi-ample over $Z$. In this case,
\[  R(X/Z, K_{X^m}+B^m) := \bigoplus_{l\geq 0} f^m_* \cO_{X^m} (l(K_{X^m}+B^m))\]
is a finitely generated $\cO_Z$-algebra, and let
\[X^c = \operatorname{Proj}  R(X/Z, K_{X^m}+B^m).\] 
  If $K_{X^m}+B^m$ is semi-ample and big over $Z$, then $X^c$ is called the \textit{log canonical model} of $(X,B)$ over $Z$.

\begin{definition}{(\cite[Definition 1.3]{birkarAugmentedBaseLocus2017})}
    Let $f:X\to Z$ be a contraction between two projective varieties, and let $L$ be an $\bR$-Cartier $\bR$-divisor on $X$. The \textit{relative exceptional locus} of $L$ (also called the \textit{relative null locus} when $L$ is nef over $Z$) is defined as \[\mathbb{E}(L/Z)=\bigcup_{L|_V \text{ is not big over $f(V)$}} V,\] where the union runs over the integral subvarieties $V\subseteq X$ with positive dimension.
\end{definition}

\begin{lem}\label{minimal models and canonical models commutes with base change}
Assume that \begin{itemize}
    \item $(X,B)$ is a lc pair and $f:X\to Z$ is a contraction,
    \item $\mu:Z'\to Z$ is a finite cover,
    \item $X'$ is the normalization of $X\times_Z Z'$ and denote the natural finite cover $X'\to X$ by $\pi$, and the contraction $X'\to Z'$ by $f'$,
    \item $(X',B')$ is a lc pair such that $K_{X'}+B'=\pi^*(K_X+B)$, and
    \item $\eta:X''\dashrightarrow X'/Z'$ is an isomorphism in codimension one and $B''$ is the strict transform of $B'$ on $X''$.
\end{itemize}
\[\xymatrix{(X'',B'')\ar@{-->}[r]^{\eta} \ar[dr]&(X',B')\ar[r]^{\pi}\ar[d]^{f'}&(X,B)\ar[d]_{f}\\ &Z'\ar[r]^{\mu}& Z}\]

Then we have the following statements:
\begin{enumerate}
    \item[(1)] If $(X,B)\dashrightarrow (X^m,B^m)$ is a good minimal model of $K_X+B$ over $Z$ and $(X'',B'')\dashrightarrow (X''^m,B''^m)$ is a good minimal model of $K_{X''}+B''$ over $Z'$, then $(X''^m,B''^m)$ is isomorphic in codimension one to the normalization of $(X^m,B^m)\times_Z Z'$,
    \item[(2)] If furthermore $K_X+B$ is big over $Z$, assume that $(X,B)\dashrightarrow (X^c,B^c)$ is the log canonical model of $K_X+B$ over $Z$, and $(X'',B'')\dashrightarrow (X''^c,B''^c)$ is the log canonical model of $K_{X''}+B''$ over $Z'$. Then $(X''^c,B''^c)$ is isomorphic to the normalization of $(X^c,B^c)\times_Z Z'$.
\end{enumerate}

\begin{proof}
(1). By \cite[Lemma 2.4(1)]{haconExistenceLogCanonical2013}, 
the set of exceptional divisors of $X\dashrightarrow X^m$ coincides with the support of $N_\sigma(K_X+B/Z)$, and the set of exceptional divisors of $X''\dashrightarrow X''^m$ coincides with the support of $N_\sigma(K_{X''}+B''/Z')$. Thus it suffices to prove \[\Supp(N_\sigma(K_{X''}+B''/Z'))=\eta^{-1}\pi^{-1}\Supp(N_\sigma(K_X+B/Z)).\]
    
Since $X'\rightarrow X$ is a finite cover, by \cite[\S 3, Theorem 5.16]{nakayamaZariski-decompositionandAbundance}, we have \[\pi^{-1}\Supp(N_\sigma(K_X+B/Z))=\Supp(N_\sigma(K_{X'}+B'/Z')).\] Since $(X',B')$ is isomorphic in codimension one to $(X'',B'')$, there is a one-to-one correspondence between $|m(K_{X'}+B')/Z'|$ and $|m(K_{X''}+B'')/Z'|$ for every $m\in \mathbb{N}$, hence \[\eta^{-1}\Supp(N_\sigma(K_{X'}+B'/Z'))=\Supp(N_\sigma(K_{X''}+B''/Z')),\] and we finish the proof.

    (2). Since $X'\rightarrow X$ is a finite cover, we have \[\pi^{-1}\mathbb{E}(K_X+B/Z)=\mathbb{E}(K_{X'}+B'/Z')\]
    by \cite[Theorem 1.1]{gomezStableAugmentedBase2022}.
    Since $(X'',B'')$ is isomorphic in codimension one to $(X',B')$, the divisorial part of $\mathbb{E}(K_{X''}+B''/Z')$ coincides with the strict transform of the divisorial part of $\mathbb{E}(K_{X'}+B'/Z')$.
 Let $(\widetilde{X}^c,\widetilde{B}^c)$ be the normalization of $(X^c,B^c)\times_Z Z'$. Since $X^m\rightarrow X^c$ contracts  $\mathbb{E}(K_{X^m}+B^m/Z)$ and $X''^m\rightarrow X''^c$ contracts  $\mathbb{E}(K_{X''^m}+B''^m/Z')$, we conclude that $(X''^c,B''^c)$ is isomorphic in codimension one to $(\widetilde{X}^c,\widetilde{B}^c)$.

Note that $K_{\widetilde{X}^c}+\widetilde{B}^c$ is ample because $K_{X^c}+B^c$ is ample and $\widetilde{X}^c\rightarrow X^c$ is a finite cover. Since $K_{\widetilde{X}^c}+\widetilde{B}^c$ and $K_{X''^c}+B''^c$ are both ample, and since $(X''^c,B''^c)$ and $(\widetilde{X}^c,\widetilde{B}^c)$ are isomorphic in codimension one, we conclude that $(X''^c,B''^c)$ is isomorphic to $(\widetilde{X}^c,\widetilde{B}^c)$.
    \end{proof}
\end{lem}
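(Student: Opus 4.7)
The overall strategy is to identify each of the four models $X^m$, $X^c$, $X''^m$, $X''^c$ with the variety obtained from its source by contracting an intrinsically defined locus. Such a contraction is unique up to isomorphism in codimension one (and up to honest isomorphism when the contracted divisor becomes ample), so it suffices to show that the loci contracted on the two sides of the base change match up under the finite cover $\pi$ and the small birational map $\eta$.

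For (1), I would characterize the $\phi$-exceptional prime divisors of a good minimal model $\phi:(X,B)\dashrightarrow (X^m,B^m)$ over $Z$ as exactly $\Supp N_\sigma(K_X+B/Z)$, using the standard argument from \cite{haconExistenceLogCanonical2013}; the same characterization holds over $Z'$. Thus the key identity to prove is
\[
\Supp N_\sigma(K_{X''}+B''/Z') \;=\; \eta^{-1}\pi^{-1}\Supp N_\sigma(K_X+B/Z).
\]
The equality $\pi^{-1}\Supp N_\sigma(K_X+B/Z) = \Supp N_\sigma(K_{X'}+B'/Z')$ follows from the behaviour of $N_\sigma$ under finite covers (e.g.\ Nakayama's book). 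For the small map $\eta$, pullback of sections yields a bijection $|m(K_{X'}+B')/Z'| \leftrightarrow |m(K_{X''}+B'')/Z'|$ for every $m$, so the supports of the negative parts correspond under $\eta$. Composing these equalities gives the claim, and then the normalization of $X^m\times_Z Z'$ agrees with $X''^m$ in codimension one by uniqueness of good minimal models up to small birational maps.

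For (2), I would use that when $K_{X^m}+B^m$ is semi-ample and big over $Z$, the morphism $X^m\to X^c$ contracts exactly $\mathbb{E}(K_{X^m}+B^m/Z)$, and similarly over $Z'$. Applying (1) to replace $X$ by $X^m$ and $X''$ by $X''^m$, I then invoke \cite{gomezStableAugmentedBase2022} to get $\pi^{-1}\mathbb{E}(K_{X^m}+B^m/Z) = \mathbb{E}(K_{X'^m}+B'^m/Z')$, and the small-map observation above to transfer this identity through $\eta$. Together these show $X''^c$ and the normalization $\widetilde{X^c}$ of $X^c\times_Z Z'$ are isomorphic in codimension one. To upgrade to an honest isomorphism, I note that both are normal projective over $Z'$ with ample log canonical divisor: $K_{\widetilde{X^c}}+\widetilde{B^c}$ is ample as the pullback of the ample $K_{X^c}+B^c$ along the finite map $\widetilde{X^c}\to X^c$, and $K_{X''^c}+B''^c$ is ample by definition of log canonical model. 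A small birational map between normal projective varieties each carrying an ample divisor that matches via the birational map must be an isomorphism.

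\textbf{Main obstacle.} The technical heart is ensuring that the relative Nakayama--Zariski decomposition and the relative exceptional locus are genuinely compatible with finite covers in the relative setting over $Z$; these are known facts, but one must track carefully that the pair structure on $X'$ obtained by pulling back $B$ via $\pi$ is exactly the one whose minimal/canonical model over $Z'$ agrees with the base change of the corresponding model over $Z$, so that no correction term obstructs the identification of linear systems.
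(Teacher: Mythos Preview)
Your proposal is correct and follows essentially the same route as the paper: in (1) you characterize the contracted divisors via $\Supp N_\sigma$ and use Nakayama for the finite cover plus the bijection of linear systems for the small map, exactly as the paper does; in (2) you use the relative exceptional locus $\mathbb{E}$, the Gomez result for its behaviour under finite covers, and the ampleness argument to upgrade from codimension-one to honest isomorphism. The only cosmetic difference is that you propose applying Gomez at the level of the minimal models $X^m$, $X''^m$ (after invoking (1)), whereas the paper applies it directly to $\pi:X'\to X$ and then transfers to $X''$ and to the minimal models; both orderings work.
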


\subsection{b-divisors}
    Let $X$ be a normal quasi-projective variety. A \textit{$\textbf{b}$-divisor} $\M$ over $X$ is a collection of $\bR$-divisors $\M_Y$ on $Y$ for each birational morphism $Y \to X$  that are compatible with respect to pushdown, that is, if $Y' \to X$ is another birational morphism and $\psi: Y' \dashrightarrow  Y$ is a morphism, then $ \psi_*\M_{Y'} = \M_Y$.

    	A \textbf{b}-divisor $\M$ is \textit{\textbf{b}-$\bR$-Cartier} if there is a birational morphism $Y\to X$ such that 
 $\M_{Y}$ is $\bR$-Cartier and $\M_{Y'}$ is the pullback of $\M_{Y}$ on $Y'$ for any birational morphism $Y'\to Y$.
In this case, we say that $\M$ descends to $Y$, and it is represented by $\M_Y$,  
 we write $\M=\overline{\M_{Y}}$.

A \textbf{b}-$\bR$-Cartier $\bR$-divisor $\M$ represented by $\M_Y$ for some birational model $Y\to X$  is \textit{\textbf{b}-nef} if $\M_Y$ is nef. Similarly, $\M$ is \textit{\textbf{b}-nef and big}  if $\M_Y$ is nef and big.

\begin{definition}[Discrepancy \textbf{b}-divisors]
The discrepancy \textbf{b}-divisor $\textbf{A}=\textbf{A}(X, B)$ of a sub-pair $(X, B)$ is the \textbf{b}-$\bR$-divisor of $X$ with the trace $\textbf{A}_ Y=\sum a_i A_i$ defined by the formula
\[K_Y = f ^*(K_ X + B) +\textbf{A}_Y,\]
where $f : Y \to X$ is a proper birational morphism of normal quasi-projective varieties. Similarly, we define  $\textbf{A}^*=\textbf{A}^*(X, B)$ by
\[\textbf{A}_Y^* =\sum_{a_i>-1} a_i A_i.\]
Note that $\textbf{A}(X, B)=\textbf{A}^*(X, B)$ when $(X, B)$ is sub-klt.
By the definition, we have $\cO_X ( \lceil\textbf{A}^*(X,B)\rceil )\simeq\cO_X$ if $(X, B)$ is lc. We also have $\cO_X ( \lceil\textbf{A}(X,B)\rceil )\simeq\cO_X$ when $(X, B)$ is klt.
    
\end{definition}

\subsection{Generalized pairs and singularities}

A \textit{generalized sub-pair} $(X,B,\M)/Z$ consists of:
\begin{enumerate}
    \item a normal quasi-projective variety $X$ equipped with a projective morphism $X\to Z$,
    \item an $\bR$-divisor $B$ on $X$, and
    \item a \textbf{b}-$\bR$-Cartier \textbf{b}-divisor $\M$ over $X$, represented by a projective birational morphism $f: X'\to X$ and an $\bR$-Cartier $\bR$-divisor $\M_{X'}$ on $X'$  
    such that $\M_{X'}$ is nef over $Z$ and $K_X+B+\M_X$ is $\bR$-Cartier.
\end{enumerate}

When $Z$ is a point, we omit it and say that the pair is quasi-projective, in which case we also say that $(X,B,\M)$ is a generalized sub-pair with nef part $\M$. If, in addition, $B \geq 0$, then $(X,B,\M)$ is a \textit{generalized pair}.

Let $D$ be a prime divisor over $X$. Replace $X'$ with a log resolution of $(X,B)$ such that $D$ is a prime divisor on $X'$. We can write  
\[
K_{X'}+B'+\M_{X'}=\pi^*(K_X+B+\M_X).
\]  
Then the \textit{generalized log discrepancy} of $D$ is defined as  
\[
a(D,X,B,\M)=1-\mult_D B'.
\]

We say that $(X,B,\M)$ is \textit{sub-klt} (resp. \textit{sub-lc}, \textit{sub-$\epsilon$-lc}) if $a(D,X,B,\M)>0$ (resp. $a(D,X,B,\M)\geq0$, $a(D,X,B,\M)\geq \epsilon$) for every prime divisor $D$ over $X$.  
If $(X,B,\M)$ is a generalized pair, we remove the prefix ``sub'' and say the generalized pair is \textit{klt} (resp. \textit{lc}, \textit{$\epsilon$-lc}).

\subsection{Canonical bundle formula}

\begin{definition}\label{def:klt trivial fib}
	An \textit{lc-trivial fibration} (resp. \textit{klt-trivial fibration})  $f : (X, B) \rightarrow Z$ consists of a projective surjective morphism $f : X \rightarrow Z$ with connected fibers  between normal quasi-projective varieties and a pair $(X, B)$ satisfying the following properties:
	\begin{enumerate}
		\item (X, B) is sub-lc (resp. sub-klt) over the generic point of $Z$,
		\item $\operatorname{rank}f_*\cO _X ( \lceil\textbf{A}^*(X,B)\rceil ) = 1$, and
		\item there exists an $\bR$-Cartier $\bR$-divisor $L_Z$ on $Z$ such that
		\[K_X + B \sim_{\bR} f^*L_Z.\]
	\end{enumerate}
\end{definition}

In \cite{ambroShokurovBoundaryProperty2004,ambroModuliBdivisorLctrivial2005}, klt-trivial fibrations as in Definition \ref{def:klt trivial fib} are called lc-trivial fibrations.

Let $f : (X, B) \rightarrow Z$ be an lc-trivial fibration such that $\dim Z>0$.
Fix a prime divisor $D$ on $Z$ and let $t_D$ be the lc threshold of $f^*D$ with respect to $(X,B)$ over the generic point of $D$. Now let  $B_Z:=\sum (1-t_D) D$, where the sum runs over all the prime divisors on $Z$.  Let $M_Z:=L_Z-(K_Z+B_Z)$. Then we have \[K_X+B\sim_{\bR} f^*(K_Z+B_Z+M_Z).\] 
We call $B_Z$ the \textit{discriminant $\bR$-divisor} and $M_Z$ the \textit{moduli $\bR$-divisor} of adjunction. Note that $B_Z$ is uniquely determined but $M_Z$ is determined only up to $\bR$-linear equivalence. 

Now let $\phi \colon X' \to X$ and $\psi \colon Z' \to Z$ be birational morphisms from normal quasi-projective varieties, and assume that the induced map $f' \colon X' \dashrightarrow Z'$ is a morphism.
Let $K_{X'}+B'$ be the pullback of $K_X+B$ on $X'$, and let $L_{Z'}=\psi^*L_Z$.
Then $f' \colon (X',B') \to Z'$ is an lc-trivial fibration, and we can define a discriminant $\bR$-divisor $B_{Z'}$ and a moduli $\bR$-divisor $M_{Z'}$ on $Z'$ such that
\[
K_{X'}+B' \sim_{\bR} f'^*(K_{Z'}+B_{Z'}+M_{Z'}), 
\]
with $B_Z=\psi_*B_{Z'}$ and $M_Z=\psi_*M_{Z'}$.
In particular, the lc-trivial fibration $f \colon (X,B)\to Z$ induces \textbf{b}-$\bR$-divisors $\textbf{B}$ and $\M$ on $Z$, called respectively the \textit{discriminant} and the \textit{moduli} \textbf{b}-$\bR$-divisors.

\begin{thm}[\cite{ambroShokurovBoundaryProperty2004,fujinoModuliBdivisorsLctrivial2014a,huLogAbundanceModuli2020}]
With the above notation and assumptions, suppose that $(X,B)$ is lc over the generic point of $Z$.
Then there exists a proper birational morphism $Z' \to Z$ from a normal quasi-projective variety such that $\M_{Z'}$ is nef, and $\M_{Z''}=\nu^*\M_{Z'}$ for any proper birational morphism $\nu \colon Z'' \to Z'$.
In particular, we can view $(Z,B_Z,\M)$ as a generalized pair.
\end{thm}

\begin{prop}  
[{\cite[Proposition 3.1]{ambroModuliBdivisorLctrivial2005}}]\label{amb05prop3.1}
Let $f : (X, B) \to Z$ be a klt-trivial fibration over a proper normal variety $Z$. Let $\tau: Z' \to Z $ be a surjective morphism from a proper normal variety $Z'$, let $X'$ be the normalization of the main component of $X\times_Z Z'$, and let $B'$ be the divisor on $X'$ such that $K_{X'}+B'=\tau_X^*(K_X+B)$. Then we say that $f':(X',B')\to Z'$ is the   \textit{klt-trivial fibration induced by base change}.
    Let $\M$ and $\M'$ be the corresponding moduli \textbf{b}-divisors of $f$ and $f'$ respectively. Then we have
    \[\tau^*\M=\M'.\]
\end{prop}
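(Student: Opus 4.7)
The plan is to reduce the statement to two basic cases and then combine them: the case where $\tau$ is birational and the case where $\tau$ is finite. By Stein factorization, any surjective $\tau\colon Z'\to Z$ between normal varieties factors as $\tau=\tau_{\mathrm{fin}}\circ\tau_{\mathrm{bir}}$ with $\tau_{\mathrm{bir}}$ a contraction (which I can further dominate by a birational morphism between smooth varieties via resolution) and $\tau_{\mathrm{fin}}$ finite. Because the moduli \textbf{b}-divisor is functorial under birational pullback on the base by construction, the birational case is essentially built into the definition: if $Z''\to Z$ is birational, then one has $K_{X''}+B''\sim_{\bR} f''^{*}(K_{Z''}+B_{Z''}+M_{Z''})$, and the moduli \textbf{b}-divisor on any common high resolution pulls back to itself. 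So the real work is the finite case.

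Assume now that $\tau$ is finite. I would first move to high models: pick a log resolution $\psi\colon Z_1\to Z$ on which $\M_{Z_1}$ descends, and a log resolution $\psi'\colon Z_1'\to Z'$ on which $\M'_{Z_1'}$ descends and which dominates the normalization of $Z_1\times_Z Z'$. Let $\tau_1\colon Z_1'\to Z_1$ be the induced finite morphism (after possibly further blowing up to make it a morphism). By birational invariance on either side, it suffices to prove the identity $\tau_1^{*}\M_{Z_1}=\M'_{Z_1'}$ of honest divisors. Over the generic points of prime divisors on $Z_1$ and $Z_1'$, I perform the standard lc-threshold computation, using
$K_{X'}+B'=\tau_X^{*}(K_X+B)\sim_{\bR}f'^{*}\tau^{*}L_Z$, which already tells me $L_{Z'}=\tau^{*}L_Z$ (up to $\bR$-linear equivalence).

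The core calculation is comparing discriminants. Fix a prime divisor $D'\subset Z_1'$ mapping to a prime divisor $D\subset Z_1$ with ramification index $e=e_{D'/D}$, and let $t_D,t_{D'}$ be the respective lc thresholds of $f_1^{*}D$ and $f_1'^{*}D'$ with respect to $(X_1,B_1)$ and $(X_1',B_1')$. Using $\tau_X^{*}f_1^{*}D=e\cdot f_1'^{*}D'+(\text{horizontal contributions pulled back from vertical divisors})$ and the pullback formula $K_{X_1'}+B_1'=\tau_X^{*}(K_{X_1}+B_1)$, a direct coefficient comparison over the generic point of $D'$ gives $1-t_{D'}=e(1-t_D)+(e-1)$, equivalently $e\,t_D=t_{D'}+(e-1)$. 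Summing over components then yields
\[
K_{Z_1'}+B_{Z_1'}=\tau_1^{*}(K_{Z_1}+B_{Z_1}),
\]
once one applies the Hurwitz-type identity $K_{Z_1'}=\tau_1^{*}K_{Z_1}+\sum(e_{D'/D}-1)D'$ coming from the fact that the base is smooth along $D'$ after our resolutions. Combining this with $L_{Z_1'}=\tau_1^{*}L_{Z_1}$ produces the desired identity $\M'_{Z_1'}=\tau_1^{*}\M_{Z_1}$ on these high models, and by birational invariance the equality propagates to the \textbf{b}-divisor level.

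The main obstacle I anticipate is the bookkeeping in the ramification computation: one must verify that the high models $Z_1,Z_1'$ can indeed be chosen simultaneously so that $\tau_1$ is a morphism with $\M,\M'$ both descending, and one must control the horizontal contributions appearing in $\tau_X^{*}f_1^{*}D$ so that they do not disturb the lc-threshold computation. This is essentially a local-on-$Z'$ check, reducing to the case where $D$ and $D'$ are smooth and $\tau_1$ is étale away from $D'$, which is what the log-resolution setup guarantees after a finite number of blow-ups. The finite-case calculation is the heart of Ambro's original argument; the birational half and the Stein factorization are organizational.
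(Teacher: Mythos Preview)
The paper does not supply its own proof of this proposition; it is stated with a citation to Ambro's original paper. Your outline is essentially Ambro's argument, so in that sense it matches what the paper relies on.

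There is, however, an arithmetic slip in your core step. Over the generic point of $D'$ one has $\tau_X^{*}(f_1^{*}D)=e\cdot f_1'^{*}D'$, and since sub-lc is preserved under finite pullback via $K_{X_1'}+B_1'=\tau_X^{*}(K_{X_1}+B_1)$, the correct relation is simply $t_{D'}=e\,t_D$. Hurwitz then gives
\[
\mathrm{coeff}_{D'}\bigl(\tau_1^{*}(K_{Z_1}+B_{Z_1})-K_{Z_1'}\bigr)=e(1-t_D)-(e-1)=1-e\,t_D=1-t_{D'},
\]
which is exactly $\mathrm{coeff}_{D'}B_{Z_1'}$. Your displayed identity $1-t_{D'}=e(1-t_D)+(e-1)$ has the wrong sign on the ramification term, and the formula you label ``equivalently'' is not in fact equivalent to it. Your final conclusion $K_{Z_1'}+B_{Z_1'}=\tau_1^{*}(K_{Z_1}+B_{Z_1})$ is nevertheless correct, so this is a recoverable slip rather than a conceptual gap.

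One structural remark: Stein factorization writes $\tau$ as a contraction followed by a finite map, but the contraction is not birational when $\dim Z'>\dim Z$, so your parenthetical about dominating it by a birational morphism does not cover that case. You should note that the same codimension-one threshold comparison handles a non-birational contraction as well (for $D'$ horizontal over $Z$ one gets $t_{D'}=1$ from sub-klt-ness of the generic fibre), or else reduce to the generically finite situation first.
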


\begin{thm}[\cite{ambroModuliBdivisorLctrivial2005}]
		\label{moduli part is nef and good}
		Let $f:(X,B)\rightarrow Z$ be a klt-trivial fibration over a normal projective variety $Z$ such that $B$ is a $\bQ$-divisor. 
Suppose that the geometric generic fiber $X_ {\overline{\eta}}= X \times _Z \operatorname{Spec}\overline{k(Z)}$ is projective and 
		$B _{\overline{\eta}}$ is effective. Then there exist  non-singular projective varieties $\overline{Z}$, $T$ and $V$,  and a commutative diagram
			\[\xymatrix{
		(X,B)  \ar[d] _{f}& &   (X_T,B_T) \ar[d]_{f_T} &\\
		Z	\ar@/_1pc/@{-->}[rrr] _\gamma &  \overline{Z}\ar[l]_{\tau} \ar[r]^{\rho}& 	 T \ar@/_1pc/@{-->}[ll]_i  \ar[r]^{\pi} & V
	} \]

		such that
		\begin{enumerate}
			\item $f_T:(X_T,B_T)\rightarrow T$ is a klt-trivial fibration,
			\item $\tau$ is generically finite and surjective, and $\rho$ is surjective, 
			\item there exists a nonempty open subset $U\subset \overline{Z}$ and an isomorphism
			\[\xymatrix{
				(X,B)\times_Z U  \ar[rr] _{\cong}\ar[dr]& &   (X_T,B_T)\times_{T} U \ar[dl]\\
				&  U,& 	 
			} \]
			
			\item let $\M$, $\N$ be the corresponding moduli $\mathbf{b}$-divisors of $f$ and $f_T$, then $\N$ is $\mathbf{b}$-nef and big, and  $\tau^*\M=\rho^*\N$,
			\item   $\pi$ is generically finite and surjective, $\Phi:Z\dashrightarrow V$ is bimeromorphic to the period map defined in \cite[Proposition 2.1]{ambroModuliBdivisorLctrivial2005}, and 
			\item $i:T\dashrightarrow Z$ is a rational map such that  $f_T:(X_T,B_T)\to T$ is equal to the pullback of $f:(X,B)\to Z$ via $i$ over some open subset of $T$.
			
		\end{enumerate}
		
	\end{thm}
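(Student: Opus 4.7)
\emph{Sketch of approach.} The plan is to construct $V$ from the period map of the variation of Hodge structure attached to the geometric generic fiber of $f$, and to produce $T$ as an intermediate space that realizes the iso-trivial reduction of $(X,B)/Z$ after a generically finite base change. The main ingredients will be semistable reduction, algebraicity of period maps, and positivity of Hodge bundles for variations of maximal variation.

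First, I would take a log resolution of $(X,B)$ and perform semistable reduction \`a la Abramovich-Karu or Kawamata after a generically finite cover $\tau:\bar Z \to Z$ with $\bar Z$ smooth projective, so that the pulled-back family carries a polarized $\bQ$-variation of Hodge structure whose monodromy is unipotent in codimension one. The associated period map $\bar\Phi:\bar Z\to D/\Gamma$ of \cite[Proposition 2.1]{ambroModuliBdivisorLctrivial2005} has algebraic image by the classical theorems of Griffiths-Schmid-Borel; taking a projective compactification of this image together with a further blow-up of $\bar Z$ makes $\bar\Phi$ into a morphism $\bar Z \to V$ with $V$ smooth projective, which realizes the rational map $\gamma$ appearing in item (5).

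Second, I would factor $\bar\Phi$ as $\bar Z \xrightarrow{\rho} T \xrightarrow{\pi} V$, where $T$ encodes not only the Hodge class but the actual isomorphism type of the pair $(X_{\bar z}, B_{\bar z})$ at Hodge-generic points $\bar z\in\bar Z$. Two points of $\bar Z$ are to be identified in $T$ precisely when the corresponding fibers are isomorphic as klt-trivial pairs. Since under our assumptions the pairs over a fixed point of $V$ form a bounded family, $T$ exists (up to birational equivalence) as a projective variety, equipped with a universal family $f_T:(X_T,B_T)\to T$ yielding (1), a surjection $\rho$ and a generically finite surjection $\pi$. Item (3) is automatic on the open locus where the two pullbacks of the families to $\bar Z$ agree, and (6) follows by choosing $i$ to be a rational inverse of the natural map $\bar Z \supset U' \to T$ composed with $\tau$.

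Third, the identity $\tau^*\M=\rho^*\N$ is a double application of Proposition \ref{amb05prop3.1}: base change to $\bar Z$ from both $Z$ and $T$ produces the same klt-trivial fibration on $\bar Z$, hence the same moduli $\textbf{b}$-divisor, which reads as $\tau^*\M$ from one side and $\rho^*\N$ from the other. The $\textbf{b}$-nefness of $\N$ is the Ambro-Fujino-Hu nefness theorem cited above. The main obstacle, and the deepest input, is the $\textbf{b}$-bigness of $\N$: because $\pi$ is generically finite onto the period image, the family $f_T$ has \emph{maximal variation} of Hodge structure, so the Kawamata-Viehweg-Zuo positivity theorems for Hodge bundles under maximal variation, applied to a nef representative of $\N$ on a suitable resolution of $T$, yield the bigness. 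The most delicate technical point is pasting the algebraic-geometric moduli construction together with the analytic period map on a single smooth projective $T$, which requires passing to a further generically finite cover to rigidify the variation and descend everything to the level of $\textbf{b}$-divisors.
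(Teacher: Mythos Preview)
Your overall architecture---period map to $V$, algebraization via Kawamata--Borel--Schmid, and bigness of $\N$ from maximal variation of Hodge structure---matches the strategy of \cite{ambroModuliBdivisorLctrivial2005} that the paper cites. The identity $\tau^*\M=\rho^*\N$ via Proposition~\ref{amb05prop3.1} is also exactly right.

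The point of departure is your construction of $T$. You propose to build $T$ as a quotient of $\bar Z$ by the equivalence relation ``fibers are isomorphic as pairs,'' and then equip it with a universal family. This is vague: such a quotient is not obviously algebraic, and producing a universal family on it requires a moduli-theoretic input you have not supplied. The paper (following Ambro's proof of \cite[Theorem~2.2]{ambroModuliBdivisorLctrivial2005}) sidesteps this entirely. One first algebraizes the period map to a morphism $\gamma^o:Z^o\to V^o$, then chooses a generically finite cover $T^o\to V^o$ so that the main component $\bar Z^o$ of $Z^o\times_{V^o}T^o$ admits a \emph{section} $\alpha:T^o\to\bar Z^o$ over $T^o$. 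The family over $T$ is then \emph{defined} as the pullback of $(X,B)\to Z$ along $i^o:=\tau^o\circ\alpha:T^o\to Z^o$; this is exactly what makes item~(6) immediate rather than something to be arranged afterward. The isomorphism in item~(3) then follows, after a further generically finite cover, from the isotriviality of the family along fibers of the period map. Your proposal inverts the logic: you try to construct the family first and recover $i$ as a rational inverse, whereas the actual argument uses $i$ (the section) to produce the family.

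So the gap is not in the Hodge-theoretic or positivity inputs, which you have identified correctly, but in the concrete mechanism for producing $(X_T,B_T)\to T$: replace your quotient-by-isomorphism idea with the section trick and the rest of your sketch goes through.
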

	\begin{proof}
The assertions (1)--(4) are stated in \cite[Theorem 3.3]{ambroModuliBdivisorLctrivial2005}, while  (5) and (6) can be derived from the proof of \cite[Theorem 2.2]{ambroModuliBdivisorLctrivial2005}. Indeed, by the algebraization theorem in \cite[Theorem 11]{kawamataKodairaDimensionCertain1983}, the period map defined in \cite[Proposition 2.1]{ambroModuliBdivisorLctrivial2005} is bimeromorphic to   a morphism $\gamma^o:Z^o\rightarrow V^o$ from a non-empty open subset of $Z$ to a non-singular quasi-projective variety $V^o$.
Let $T^o\rightarrow V^o$ be a generically finite surjective morphism from a non-singular quasi-projective variety $T^o$ such that if $\overline{Z}^o$  is the main part of $Z^o\times_{V^o} T^o$, then the induced morphism $\rho^o:\overline{Z}^o\rightarrow T^o$ has a  section $\alpha$.  By base change via the  section $i^o: T^o\stackrel{\alpha}{\longrightarrow}\overline{Z}^o\stackrel{\tau^o}{\longrightarrow}Z^o$, we induce a family $f_{T^o}:(X_{T^o},B_{T^o})\rightarrow T^o$. After replacing $\overline{Z}^o$ and $T^o$ by generically finite covers from non-singular quasi-projective varieties,  we have an isomorphism of pairs over $\overline{Z}^o$
\[(X,B)\times_{Z}\overline{Z}^o\stackrel{\sim}{\longrightarrow} (X_{T^o},B_{T^o})\times_{T^o} \overline{Z}^o.\]
Let $\overline{Z}$, $T$ and $V$ be non-singular projective compactifications of $\overline{Z}^o$, $T^o$ and $V^o$ respectively, and let $(X_T,B_T)$ be a normal projective compactification of $(X_{T^o},B_{T^o})$ so that $f_{T^o}$ induces a  klt-trivial fibration $f_T:(X_T,B_T)\rightarrow T$.
Then  (5) and (6) are satisfied.
	\end{proof} 

\subsection{Finite covers}

\begin{prop}\label{:prop:diagonal}
Let $f \colon (X,B) \to Z$ be a projective isotrivial fibration between normal quasi-projective varieties
with general fiber $(F,B_F)$.
Assume that there exists a Galois cover $Z' \to Z$ with Galois group $G$ such that
\[
(X,B) \cong \bigl((F,B_F) \times Z'\bigr)/G,
\]
where $G$ acts on $(F,B_F) \times Z'$ and the action is $G$-equivariant with respect to the projection
$(F,B_F) \times Z' \to Z'$.
Assume further that $(F,B_F)$ is klt and that $\kappa(F,K_F + B_F)\geq 0$.

Then there exists another Galois cover $\widetilde{Z} \to Z$ with the Galois group $H$ such that
\[
(\widetilde{X},\widetilde{B})
:= (X,B) \times_Z \widetilde{Z}
\cong (F,B_F) \times \widetilde{Z}.
\]
Moreover,
\[
(X,B)\simeq \bigl((F,B_F)\times \widetilde{Z}\bigr)/H,
\]
where $H$ acts diagonally on $(F,B_F)\times \widetilde{Z}$, that is, there exists a homomorphism
\[
\rho_F \colon H \to \Aut(F,B_F):=\{\sigma\in \Aut(F)\mid \sigma_*B_F=B_F\}
\]
such that for every $h\in H$,
\[
h\cdot ((f,b),\widetilde{z})
= \bigl(\rho_F(h)\cdot (f,b), h\cdot \widetilde{z}\bigr),
\]
for all $(f,b)\in (F,B_F)$ and $\widetilde{z}\in \widetilde{Z}$.
\end{prop}
\begin{remark}
One may replace $H$ by the quotient $H/\ker(\rho_F)$ and replace
$\widetilde{Z}$ by the corresponding Galois cover, so that the induced action on
$(F,B_F)$ becomes faithful. However, we do not require this, since in applications
we frequently replace $\widetilde{Z}$ by further Galois covers.
\end{remark}
\begin{proof}
   We follow the argument of \cite[Theorem 43]{kollarDeformationsEllipticCalabiYau2015}.

 Since $f \colon (X,B)\to Z$ becomes a product family after the Galois cover
$Z'\to Z$, we have
\[
\operatorname{Im}\bigl(G \to \Aut(F,B_F)\bigr)
=
\operatorname{Im}\bigl(\pi_1(Z)\to \Aut(F,B_F)\bigr).
\]
Consider the discrete part of the monodromy representation
\[
\rho^d \colon \pi_1(Z)\to \Aut(F,B_F)/\Aut^0(F,B_F),
\]
where $\Aut^0(F,B_F)$ is the connected component  of 
$\Aut(F,B_F)$ containing the identity.
Then
\[
H_d := \operatorname{Im}\bigl(\rho^d\bigr)
= \operatorname{Im}\bigl(G\to \Aut(F,B_F)/\Aut^0(F,B_F)\bigr)
\]
is a finite group.
Let $Z^d\to Z$ be the corresponding Galois cover with Galois group $H_d$.

The trivialization of the translation part $\Aut^0(F,B_F)$ is more subtle and it depends on additional choices.

Since $(F,B_F)$ is klt and $\kappa(F,K_F+B_F)\ge 0$, the group
$\Aut^0(F,B_F)$ is an abelian variety by \cite[Proposition~4.6]{ambroModuliBdivisorLctrivial2005}.
A general $\Aut^0(F,B_F)$-orbit $A_F \subset F$ defines an isotrivial abelian family
\[X^d \supset A^d \to Z^d.\]
By assumption, there exists an $f$-ample line bundle $L$ on $X$, whose pullback
defines a relatively ample line bundle $L_{A^d}$ on $A^d$.
We may assume that the degree $m$ of $L_{A^d}$ on the general fiber is at least $3$. 
By \cite[Remark 44]{kollarDeformationsEllipticCalabiYau2015}, there exists a closed subscheme
$T^d \subset A^d$ such that $T^d \to Z^d$ is \'etale and every fiber
$T^d_F \subset A_F$ is a translate of the subgroup of $m$-torsion points.
Since $L_{A^d}$ is $H_d$-invariant, the morphism $T^d\to Z^d$ is  $H_d$-equivariant, hence it defines a  monodromy representation
\[\pi_1(Z^d) \to \Aut^0(F,B_F).\]
Let $\Gamma \subset \pi_1(Z^d)$ be a finite-index subgroup that is normal in
$\pi_1(Z)$, and let $\widetilde{Z} \to Z$ be the corresponding Galois cover
with Galois group $H := \pi_1(Z)/\Gamma$.

By pullback, we obtain an isotrivial abelian fibration
$\widetilde{A} \to \widetilde{Z}$
 with a trivialization of the $m$-torsion points.
For $m \ge 3$, by \cite[p.~513, Lemma~2.9]{arbarelloGeometryAlgebraicCurves2011},
we have $\widetilde{A} \simeq A_F  \times \widetilde{Z}$.
Consequently, the same pullback also trivializes the fibration
\[(\widetilde{X},\widetilde{B})\cong (F,B_F)\times  \widetilde{Z}\to \widetilde{Z}.\]
The $H$-action on $(\widetilde{X},\widetilde{B})\cong (F,B_F)\times \widetilde{Z}$ is given by
\[
h\colon ((f,b),\widetilde{z}) \longmapsto
\bigl(\rho_{F,\widetilde{z}}(h)\cdot (f,b),\; h\cdot \widetilde{z}\bigr),
\]
where $\rho_{F,\widetilde{z}}\colon H \to \Aut(F,B_F)$.
Note that $\rho_{F,\widetilde{z}}$ preserves the $m$-torsion points, and that
the automorphisms of an abelian torsor (abelian variety without a specified origin) preserving a finite nonempty set
form a discrete group. It follows that $\rho_{F,\widetilde{z}}$ is in fact
independent of $\widetilde{z}$.
Hence the $H$-action on $(\widetilde{X},\widetilde{B})$
can be written as
\[
h\colon ((f,b),\widetilde{z}) \longmapsto
\bigl(\rho_F(h)\cdot (f,b),\; h\cdot \widetilde{z}\bigr).
\]
\end{proof}

The following lemma allows us to modify a generically finite cover into a finite cover.
 \begin{lem}\label{lem: finite modification}
     Let $\pi:T\to V$ be a generically finite cover between normal projective varieties. Then there exists a generically finite cover $S^!\to T$ from a smooth projective variety $S^!$ and a birational map  $S^*\dashrightarrow V$ from a projective variety $S^*$ such that  $S^! \to S^*$ is a finite cover. 
 \end{lem}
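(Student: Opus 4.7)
The plan is to apply Raynaud--Gruson's flatification theorem to $\pi \colon T \to V$ in order to convert the generically finite cover into a finite cover after birational modifications on both sides, and then invoke Hironaka's resolution of singularities to ensure smoothness of $S^!$.

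First, applying Raynaud--Gruson flatification produces a birational blowup $\mu \colon S^* \to V$ along a coherent ideal sheaf of $V$ such that the strict transform $\hat{T} \subset T \times_V S^*$ is flat over $S^*$. Since $\hat{T} \to S^*$ is proper, flat, and generically finite (inheriting the generic degree of $\pi$), fiber-dimension constancy for flat morphisms of finite type forces all fibers to be zero-dimensional, so $\hat{T} \to S^*$ is automatically a finite cover. Moreover $\hat{T} \to T$ is birational (it identifies $\hat T$ with the unique dominating component of the fiber product), and $\mu$ furnishes the required birational map $S^* \dashrightarrow V$.

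Next, I take a resolution of singularities $\sigma \colon S^! \to \hat{T}$ with $S^!$ smooth projective, available in characteristic zero by Hironaka. The composition $S^! \to \hat{T} \to T$ is birational, hence a generically finite cover as required. This supplies the data of $S^! \to T$ and of $S^*$ birational to $V$.

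The main obstacle is ensuring that the composition $S^! \to \hat{T} \to S^*$ is itself a finite cover, rather than merely generically finite: the resolution $\sigma$ may contract divisors in $S^!$ to lower-dimensional subsets of $\hat{T}$, and thereby produce positive-dimensional fibers over $S^*$. This is handled by iterating the construction --- apply Raynaud--Gruson flatification to the generically finite morphism $S^! \to S^*$, obtain a further birational modification of $S^*$ together with a finite strict transform, and resolve the resulting source again. The procedure terminates in characteristic zero by induction on the complexity of the non-flat locus (e.g.\ via the Hilbert--Samuel stratification), combined with functorial resolution of singularities, and produces the required smooth projective $S^!$ endowed with a finite morphism to a projective variety $S^*$ birational to $V$, compatible with the given morphism $S^! \to T \to V$.
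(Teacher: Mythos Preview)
Your first two paragraphs are fine: Raynaud--Gruson flatification followed by resolution does produce a smooth $S^!$ with a generically finite morphism to a variety $S^*$ birational to $V$, and $S^! \to T$ is generically finite. The problem is entirely in the last paragraph.

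The iteration you describe has no reason to terminate. Each time you resolve the singularities of the strict transform, you may introduce new exceptional divisors that are contracted by the map to $S^*$, so the next flatification step must blow up $S^*$ again, whose strict transform may again be singular, and so on. Your appeal to ``induction on the complexity of the non-flat locus via the Hilbert--Samuel stratification'' is not a known argument: the Hilbert--Samuel invariant controls singularities in resolution proofs, but here the obstruction is the non-finiteness of a morphism from a \emph{smooth} variety, and there is no evident invariant that drops at each step. Unless you can supply such an invariant with a proof that it strictly decreases, this is a genuine gap.

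The paper sidesteps this difficulty with a one-shot construction. It passes to the Galois closure: take a finite cover $S' \to T$ so that $S' \to V$ is generically Galois with group $G$, let $S''$ be the normalization of $V$ in $K(S')$ (so $V = S''/G$), and take a $G$-\emph{equivariant} resolution $S^! \to S''$. Then $S^* := S^!/G$ is automatically birational to $V$, and the quotient morphism $S^! \to S^*$ is finite simply because $G$ is finite. The finiteness is built into the construction rather than chased by iteration, which is why no termination argument is needed.
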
 
 \begin{proof}
Let $S'\to T$ be a finite cover such that $S'\to V$ is Galois over an open subset of $V$ with Galois group $G$. Let $S''$ be the closure of $V$ in $K(S')$. Then $S'' \dashrightarrow S'$ is birational and $V=S''/G$. 

Let $S^!\to S''$ be a $G$-equivariant resolution such that $S^!\to S'$ is a morphism, and let $S^*$ be the quotient of $S^!$ by $G$. Then, the map $S^*\dashrightarrow V$ is birational, $S^!\to T$ is a generically finite surjective morphism, and $S^!\to S^*$ is a finite cover.
        \[\xymatrix{
    & S'  \ar[d] & S'' \ar[d]\ar@{-->}[l] & S^! \ar[d]\ar[l]\ar@/_1pc/[ll]\\
    & T \ar[r]^\pi &V  & S^*  \ar@{-->}[l]   
}\]
 \end{proof}

The following lemma shows that relative $\bQ$-linear triviality can descend under finite covers.
\begin{lem}\label{linear trivial preserved under finite base change}
Assume that \begin{enumerate}
    \item $f:X\to Z$ is a contraction between two normal projective varieties,
    \item $D$ is a $\bQ$-Cartier $\bQ$-divisor on $X$,
    \item $\mu:Z'\to Z$ is a finite cover,
    \item  $X'$ is the normalization of $X\times_Z Z'$, and
    \item  denote the induced finite cover $X'\to X$ by $\pi$ and the induced contraction $X'\to Z'$ by $f'$.
\end{enumerate}
\[\xymatrix{X'\ar[r]^{\pi}\ar[d]_{f'}&X\ar[d]^f\\ Z'\ar[r]^{\mu}& Z}\]
If $\pi^*D\sim_{\bQ,Z'} 0$, then $D\sim_{\bQ,Z} 0$.
\begin{proof}
    Replacing $Z'$ with a finite cover and replacing $X'$ accordingly, we can assume that $\mu:Z'\to Z$ is a Galois cover with Galois group $G$. Then $G$ acts on $X'$ by base change, hence $\pi:X'\to X$ is also a Galois cover. Since $\pi^*D\sim_{\bQ,Z'} 0$, there is a $\bQ$-Cartier $\bQ$-divisor $L'$ on $Z'$ such that $\pi^*D\sim_\bQ f'^*L'$. 
    Since $\pi^*D$ is $G$-invariant, replacing $L'$ with $\frac{1}{|G|}\sum_{g\in G}g^*L'$, we can assume that $L'$ is $G$-invariant. Therefore, there exists a $\bQ$-Cartier $\bQ$-divisor $L$ on $Z$ such that $L'=\mu^*L$. Then $\pi^*D\sim_\bQ f'^*\mu^*L$, hence $D\sim_\bQ f^*L$ and we finish the proof.
\end{proof}
\end{lem}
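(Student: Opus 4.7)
The plan is to reduce to the case where $\mu$ is Galois and then average a relative trivializing divisor over the Galois group so that it descends to $Z$.

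First I would take a Galois closure: choose a finite cover $\tilde Z \to Z'$ so that $\tilde Z \to Z$ is Galois with group $G$, and let $\tilde X$ be the normalization of $X\times_Z \tilde Z$, with projections $\tilde\pi:\tilde X\to X$ and $\tilde f:\tilde X\to\tilde Z$. Pulling the hypothesis back from $Z'$ to $\tilde Z$, we see $\tilde\pi^*D$ is still $\bQ$-linearly trivial over $\tilde Z$, so after relabeling I may assume $\mu$ itself is Galois with group $G$. Base change then makes $\pi:X'\to X$ a Galois cover with the same group $G$, and $\pi^*D$ is automatically $G$-invariant since it is pulled back from the quotient $X$.

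Next, write $\pi^*D\sim_\bQ f'^*L'$ for some $\bQ$-Cartier $\bQ$-divisor $L'$ on $Z'$, and replace $L'$ by the average $L'':=\tfrac{1}{|G|}\sum_{g\in G} g^*L'$, which is $G$-invariant. The new linear equivalence $\pi^*D\sim_\bQ f'^*L''$ still holds: for each $g\in G$, $G$-invariance of $\pi^*D$ gives $\pi^*D\sim_\bQ g^*f'^*L'$, and averaging the corresponding equations expresses the difference $\pi^*D-f'^*L''$ as a $\bQ$-combination of pullbacks of principal divisors, hence as a $\bQ$-principal divisor. Since $L''$ is $G$-invariant and $Z=Z'/G$, it descends to a $\bQ$-divisor $L$ on $Z$ with $\mu^*L=L''$, and $L$ is $\bQ$-Cartier because $(\deg\mu)L=\mu_*\mu^*L=\mu_*L''$ is $\bQ$-Cartier. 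Thus $\pi^*D\sim_\bQ\pi^*f^*L$, and applying $\pi_*$ (using $\pi_*\pi^*=(\deg\pi)\cdot\mathrm{id}$ on $\bQ$-linear equivalence classes) yields $D\sim_\bQ f^*L$, i.e.\ $D\sim_\bQ 0/Z$.

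The only real subtlety I anticipate is the bookkeeping around averaging and descent, where one must verify that the average of pullbacks of rational functions is genuinely $\bQ$-principal and that a $G$-invariant $\bQ$-Cartier divisor on $Z'$ descends to a $\bQ$-Cartier divisor on $Z$. Both are standard consequences of $\mu$ being finite surjective together with the projection formula, so I expect no deeper obstacle; the essential mechanism is simply Galois descent applied to the trivializing divisor rather than to $D$ itself.
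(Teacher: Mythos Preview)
Your proposal is correct and follows essentially the same approach as the paper's proof: reduce to the Galois case, average the trivializing divisor $L'$ over the Galois group to make it $G$-invariant, descend it to $Z$, and conclude. You spell out a few details (why the averaged relation still holds, why $L$ is $\bQ$-Cartier, using $\pi_*$ to descend the $\bQ$-linear equivalence) that the paper leaves implicit, but the strategy is identical.
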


\subsection{Locally stable family}

\begin{definition}[Relative Mumford divisor]\label{def:relative mumford}
		Let $f:X\to Z$ be a flat finite type morphism with $S_2$ fibers of pure dimension $d$. 
		A subscheme $D\subset X$ is a \emph{relative Mumford divisor}   if there is an open set $U\subset X$ such that

		\begin{enumerate}
			\item $\operatorname{codim}_{X_z}(X_z\setminus U_z) \geq 2$ for each $z\in Z$, 
			\item $D\vert_U$ is a relative Cartier divisor,
			\item $D$ is the closure of $D\vert_U$, and
			\item $X_z$ is smooth at the generic points of $D_z$ for every $z\in Z$.
		\end{enumerate}
		
		By $D|_U$ being relative Cartier we mean that $D|_U$ is a Cartier divisor on $U$ and that 
		its support does not contain any irreducible component of any fiber $U_z$.  
		
		If $D\subset X$  is a relative Mumford divisor for $f:X\to Z$ and $T\to Z$ is a morphism, then the \textit{divisorial pullback}  $D_T$ on $X_T := X\times_Z T$ is the relative Mumford divisor defined to be the closure of the pullback of $D|_U $ to $U_T$.  In particular, for each $z\in Z$, we define $D_z = D|_{X_z}$ to be the closure of $D|_{U_z}$  which is the divisorial pullback of $D$ to $X_z$.
		
	\end{definition}

	\begin{definition}[Locally stable family]\label{def:locally stable}
		A \emph{locally stable family of slc pairs} $(X,B) \to Z$ over a reduced Noetherian scheme $Z$
		is a flat finite type morphism $X\to Z$ with $S_2$ fibers  and a $\bQ$-divisor $B$ on $X$ satisfying
		\begin{enumerate}
			\item  each prime component of $B$ is a relative Mumford divisor,
			\item $K_{X/Z} +B$ is $\bQ$-Cartier, and
			\item  $(X_z,B_z)$ is an slc pair for any point $z\in Z$.
		\end{enumerate}

	\end{definition}
Slc pairs naturally appear in the degeneration of lc pairs. For background on slc singularities, see \cite[\S5]{kollarSingularitiesMinimalModel2013}.

    Given a morphism $T \to Z$ of reduced schemes, we get the \textit{induced locally stable family} $ (X_T , B_ T ) \to T$ where $X_T = X \times_Z T$ and $B_T$ is defined by divisorial pullback.
\begin{definition}[Hodge line bundle]
If $f:(X,B)\to Z$ is  a locally stable family of pairs such that
$N(K_{X/Z}+B)\sim_Z 0$, we set 
\[
\lambda_{{\rm Hodge},f,N}
:= 
f_*\cO_X(N(K_{X/Z}+B)).
\]
\end{definition}

\begin{prop}\label{moduli part descends on locally stable morphisms}    
Let $f:(X,B)\to Z$ be a locally stable family of pairs such that
$N(K_{X/Z}+B)\sim_Z 0$.  
Then we have the following statements:
\begin{enumerate}
\item[(1)]  $\lambda_{{\rm Hodge},f,N}$ is the unique line bundle (up to isomorphism)
satisfying 
\[
\cO_X( N(K_{X/Z}+B)) \cong f^* \lambda_{{\rm Hodge},f,N}.
\]
\item[(2)] If $\varphi:Z'\to Z$ is a morphism and $f':(X',B')\to Z'$ denotes the pullback of $(X,B)\to Z$ by $\varphi$, then there is a canonical isomorphism
\[
 \varphi^* \lambda_{{\rm Hodge},f,N} 
 \overset{\sim}{\longrightarrow} \lambda_{{\rm Hodge},f',N}.\]
\item[(3)]  If $Z$ is smooth and the generic fiber of $X\to Z$ is normal, 
then $f:(X,B)\to Z$ is an lc-trivial fibration with 
$\cO_Z(N\M_{Z}) \cong \lambda_{{\rm Hodge},f,N}$, and the moduli \textbf{b}-divisor $\M$ of $f$ descends on $Z$.
\end{enumerate}
\end{prop}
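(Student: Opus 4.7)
For (1), the plan is to translate the hypothesis $N(K_{X/Z}+B)\sim 0/Z$ into the existence of a Cartier divisor $D$ on $Z$ with $\cO_X(N(K_{X/Z}+B))\cong f^{*}\cO_Z(D)$. Since a locally stable family is projective with $S_2$, reduced, and connected fibers, one has $f_{*}\cO_X=\cO_Z$; applying $f_{*}$ and the projection formula to the isomorphism above gives $\lambda_{\mathrm{Hodge},f,N}\cong\cO_Z(D)$, so $\cO_X(N(K_{X/Z}+B))\cong f^{*}\lambda_{\mathrm{Hodge},f,N}$. The same $f_{*}f^{*}=\mathrm{id}$ computation shows that any other line bundle $M$ on $Z$ with $f^{*}M$ isomorphic to $\cO_X(N(K_{X/Z}+B))$ must agree with $\lambda_{\mathrm{Hodge},f,N}$, proving uniqueness. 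For (2), I would use that base change preserves locally stable families and that $K_{X/Z}+B$ commutes with pullback, so $\cO_{X'}(N(K_{X'/Z'}+B'))\cong \pi^{*}\cO_X(N(K_{X/Z}+B))\cong f'^{*}\varphi^{*}\lambda_{\mathrm{Hodge},f,N}$ where $\pi:X'\to X$. Applying (1) to $f'$ identifies $\lambda_{\mathrm{Hodge},f',N}$ with $\varphi^{*}\lambda_{\mathrm{Hodge},f,N}$, and the canonical isomorphism is the flat base-change map $\varphi^{*}f_{*}\cF\to f'_{*}\pi^{*}\cF$ with $\cF=\cO_X(N(K_{X/Z}+B))$, which is an isomorphism because $\cF$ is pulled back from $Z$.

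For (3), I would first verify the lc-trivial fibration conditions. Since $Z$ is smooth, $K_{X/Z}$ is Cartier as a difference and $K_X+B\sim_{\bQ}f^{*}((K_Z+D/N))$ from the relation $N(K_X+B)=N(K_{X/Z}+B)+Nf^{*}K_Z$, which supplies the required $\bR$-linear triviality over $Z$. The generic fiber $(X_\eta,B_\eta)$ is slc by the definition of a locally stable family and normal by hypothesis, hence lc, so $(X,B)$ is sub-lc over $\eta$. The rank-one condition $\operatorname{rank}f_{*}\cO_X(\lceil\mathbf{A}^{*}(X,B)\rceil)=1$ reduces, by generic flatness, to $h^{0}(X_\eta,\cO_{X_\eta}(\lceil\mathbf{A}^{*}(X_\eta,B_\eta)\rceil))=1$, which follows from $K_{X_\eta}+B_\eta\sim_{\bQ}0$ together with lc singularities. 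Next, I would show that the discriminant divisor of the canonical bundle formula vanishes: for every prime divisor $P\subset Z$, the defining property of locally stable families (inversion of adjunction, as in \cite{kollarSingularitiesMinimalModel2013}) gives that $(X,B+f^{*}P)$ is slc near the generic point of $P$, so the lc threshold is $\geq 1$; it is also $\leq 1$ trivially, hence equals $1$. Thus $B_Z=0$ and $K_X+B\sim_{\bQ}f^{*}(K_Z+M_Z)$ for some moduli representative $M_Z$, and comparing Cartier divisor classes with (1) lets me normalize $M_Z$ so that $NM_Z$ is the Cartier divisor associated to $\lambda_{\mathrm{Hodge},f,N}$, giving $\cO_Z(N\M_Z)\cong \lambda_{\mathrm{Hodge},f,N}$.

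Finally, for the descent of $\M$, I would take any birational morphism $\pi:Z'\to Z$ from a smooth normal projective variety. Pulling the locally stable family back to $Z'$ yields a locally stable family $f':(X',B')\to Z'$ to which (3) applies, so $\cO_{Z'}(N\M_{Z'})\cong \lambda_{\mathrm{Hodge},f',N}$; by (2) this equals $\pi^{*}\lambda_{\mathrm{Hodge},f,N}\cong \cO_{Z'}(N\pi^{*}\M_Z)$. With the canonical normalization of $M_Z$ and $M_{Z'}$ dictated by the Hodge line bundles, this yields $\M_{Z'}=\pi^{*}\M_Z$, which is precisely the descent condition. I expect the main technical obstacle to be the careful identification of the discriminant divisor as zero (and the normalization that upgrades $\bQ$-linear equivalence to equality of divisors coming from canonical sections of the Hodge bundle); everything else reduces to the projection formula, flat base change, and preservation of local stability under pullback.
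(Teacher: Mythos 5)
First, note that the paper does not actually prove this proposition: it simply cites \cite[Proposition 14.7]{ascherModuliBoundaryPolarized2023} and remarks that the proof there extends from boundary polarized Calabi--Yau pairs to general Calabi--Yau pairs. So you are supplying an argument the paper outsources. Your treatment of (1) and (2) is the standard one and is fine (modulo the implicit properness/connected-fibers hypotheses needed for $f_*\cO_X=\cO_Z$ and for cohomology-and-base-change, which the proposition tacitly assumes), and in (3) your verification of the lc-trivial conditions and of $B_Z=0$ via Koll\'ar's inversion-of-adjunction statement for locally stable families is correct; you should, however, also record that the total space $X$ is normal (it is $S_2$ by flatness over the smooth $Z$ with $S_2$ fibers, and $R_1$ because the generic fiber is normal and the vertical fibers are reduced), since the paper's definition of an lc-trivial fibration requires normal varieties.

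The genuine gap is in the last step of (3), the descent of $\M$. For a birational $\pi:Z'\to Z$, the trace $\M_{Z'}$ of the moduli \textbf{b}-divisor of the \emph{original} fibration is an actual divisor, already pinned down by $L_{Z'}=\pi^*L_Z$ and the (unique) discriminant $B_{Z'}$; there is nothing left to ``normalize''. Descent is equivalent to the exact identity $B_{Z'}=-(K_{Z'}-\pi^*K_Z)$, i.e.\ a statement about log canonical thresholds, and this cannot follow from an isomorphism of Hodge line bundles, which only yields $N\M_{Z'}\sim_{\bQ} N\pi^*\M_Z$. Moreover, you silently identify $\M_{Z'}$ with the moduli divisor of the \emph{induced} family $(X\times_Z Z',B')\to Z'$; these differ a priori, because the family boundary and the crepant pullback boundary differ by $f'^*(K_{Z'}-\pi^*K_Z)$, with a corresponding shift of discriminants. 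Two ways to close the gap, both using ingredients you already have: (i) use the base-change identity $K_{X\times_Z Z'/Z'}+B'=\pi_X^*(K_{X/Z}+B)$ together with the vanishing of the discriminant of the induced locally stable family over the smooth $Z'$ (Koll\'ar's result again) to compute $B_{Z'}=-(K_{Z'}-\pi^*K_Z)$ directly, which gives $\M_{Z'}=\pi^*\M_Z$ on the nose; or (ii) note that $B_Z=0$ forces $B_{Z'}$ to be $\pi$-exceptional, so $\M_{Z'}-\pi^*\M_Z$ is $\pi$-exceptional and, by your Hodge-bundle comparison, $\bQ$-linearly trivial, hence zero by the negativity lemma. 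As written, the final deduction ``matching Hodge line bundles yields $\M_{Z'}=\pi^*\M_Z$'' does not go through without one of these supplements.
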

\begin{proof}
This is \cite[Proposition 14.7]{ascherModuliBoundaryPolarized2023}. While the proposition is stated only for families of boundary polarized Calabi--Yau pairs, their proof also applies to families of general Calabi--Yau pairs.
\end{proof}

We need the following lemma about numerically trivial divisors in a flat family.

\begin{lem}\label{lem:num trivial}
Let $f: X \to S$ be a projective flat morphism with integral fibers and of relative dimension $d$, and let $L$ be a flat family of divisors over $S$. If there exists a point $0 \in S$ such that $L_0 \equiv 0$, then $L_s \equiv 0$ for all $s \in S$.
\end{lem}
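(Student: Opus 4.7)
The plan is to show that intersection numbers on the fibers are locally constant in $s$ and then to invoke the Hodge index theorem to pass from vanishing intersection numbers to numerical triviality.

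I may assume $S$ is connected. I choose a relatively very ample Cartier divisor $H$ on $X/S$, so that $H_s:=H|_{X_s}$ is ample on every fiber. By flatness of $f$, for any integers $a,b$ the Euler characteristic $\chi(X_s,\cO_{X_s}(aL_s+bH_s))$ is locally constant in $s$; by Snapper's lemma it is a polynomial in $(a,b)$ of total degree at most $d$, and the intersection numbers $(L_s^i\cdot H_s^{d-i})_{X_s}$ for $i=0,\dots,d$ are integer linear combinations of such Euler characteristics. Hence each of these intersection numbers is locally constant in $s$. Since $L_0\equiv 0$, all such numbers with $i\geq 1$ vanish at $s=0$, hence on all of $S$; in particular $(L_s\cdot H_s^{d-1})=0$ and $(L_s^2\cdot H_s^{d-2})=0$ for every $s\in S$.

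Next I would apply the Hodge index theorem fiberwise: on a resolution of each irreducible component of $X_s$ the pullback of $H_s$ is nef and big, and the two vanishing relations above restrict to the analogous vanishings on each component. The nef-and-big form of Hodge index then forces the pullback of $L_s$ to be numerically trivial on that component, whence $L_s\equiv 0$ on $X_s$ by the projection formula.

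The main obstacle is this fiberwise Hodge-index step when $X_s$ is reducible or non-reduced, since a single intersection number on $X_s$ is only the sum of the contributions from its irreducible components, and one must argue that these individual contributions vanish separately. One way around this is to intersect against a richer family of relatively ample divisors (for example restrictions to individual components of a very ample divisor coming from a projective embedding $X\hookrightarrow \bP^N_S$), which isolates each component's contribution. For integral fibers, which cover the intended applications in this paper, the Hodge index argument applies directly.
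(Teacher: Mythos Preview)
Your proposal is correct and follows essentially the same approach as the paper: both use flatness to show that the intersection numbers $L_s^i\cdot H_s^{d-i}$ are constant in $s$, deduce the vanishings $L_s\cdot H_s^{d-1}=L_s^2\cdot H_s^{d-2}=0$, and finish with the Hodge index theorem on a surface section. Your direct appeal to Snapper's lemma is slightly cleaner than the paper's route via $(mH_s+L_s)^d=(mH_s)^d$, and your caveat about non-integral fibers is fair but irrelevant in the paper's applications, where the fibers are normal varieties.
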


 \begin{proof}
     Let $H$ be a relatively very ample line bundle on $X$. Take a closed point $s$ on $S$. Choose $m \gg 0$ such that
\[
\chi(X_s, n(mH_s + L_s)) = h^0(X_s, n(mH_s + L_s)) \quad \text{for } n\geq 1.
\]
Since $L$ is flat over $S$, it follows that
\[
\chi(X_s, n(mH_s + L_s)) = \chi(X_0, n(mH_0 + L_0)).\]
Therefore, we have 
\[h^0(X_s, n(mH_s + L_s))=h^0(X_0, n(mH_0 + L_0)).\]
From the leading term in the polynomial expansion in $n$, we obtain
\begin{equation} 
(mH_s + L_s)^d = (mH_0 + L_0)^d.\notag
\end{equation}
Similarly, we have
\begin{equation} 
(mH_s)^d = (mH_0)^d.\notag
\end{equation}
Since $L_0 \equiv 0$, it follows that
\begin{equation} 
(mH_s + L_s)^d = (mH_s)^d.\notag
\end{equation}
Expanding the left-hand side and canceling the dominant terms, we obtain
\begin{equation} 
H_s^{d-1} \cdot L_s = H_s^{d-2} \cdot L_s^2 = 0.\notag
\end{equation}
Restricting to a surface by taking  general hyperplane sections and applying the Hodge index theorem, we conclude that $L_s \equiv 0$.
 \end{proof}

 \subsection{Bounded families of pairs and morphisms}\label{sec:bdd family}

We say that a collection of log pairs $\cP$ is \textit{log birationally bounded} (resp., \textit{log bounded}, or \textit{log bounded in codimension one}) if there is a quasi-projective scheme $\cX$, a reduced divisor $\cE$ on $\cX$, and a projective morphism  $h: \cX\to T$, 
where $T$ is of finite type and $\cE$ does not contain any fiber, such that for every $(X, B) \in\cP$, there is a closed point $t\in T$ and a birational map $f : \cX_t  \dashrightarrow X$ (resp. isomorphic, or isomorphic in codimension one) 
such that $\cE_t$ contains the support of $f^{-1}_*B$ and any $f$-exceptional divisor (resp. $\cE_t$  coincides with the support of $f^{-1}_*B$, or $\cE_t$  coincides with the support of $f^{-1}_*B$).

We say that a collection of morphisms $\cF$ is  \textit{bounded} if there exist quasi-projective schemes $\cX,\cZ$, and projective morphisms $\cX\xrightarrow{\phi} \cZ\to T$, where $T$ is of finite type, such that for every morphism $X\to Z$ in $\cF$, there is a closed point $t\in T$ such that $\cX_t\to \cZ_t$ is isomorphic to $X\to Z$. 

\section{Polarized log Calabi--Yau fibrations: finite coefficients}\label{sec:Polarized log Calabi--Yau fibrations: finite coefficients}

In this section, we consider the boundedness of weak polarized log Calabi--Yau fibrations $f: ((X,B),A) \to (Z,H)$ such that the coefficients of $B$ belong to a finite set $\Phi$.
We will prove the following more general form of Theorem \ref{thm: finite coeff bdd in codim 1}.
 
\begin{thm}\label{thm:bdd of weak pcy fibration}
 Let $d\in \bN$, $v,r,\epsilon\in\mathbb{Q}^{>0}$, and $\Phi\subset[0,1]\cap \bQ$ be a finite set. Then there exists a positive integer $l$ and a bounded set of couples $\cP$ depending only on $d,\Phi,v,r,\epsilon$ satisfying the following.
 
 Assume that $f:((X,B),A)\to (Z,H)$ is a weak $(d,\Phi,v,r,\epsilon)$-polarized log Calabi--Yau fibration, and $H_Z\geq 0$ is a general element of $|6dH|$. Then there exists a couple $(V,\Theta)$ and an effective integral divisor $J$ on $V$ such that
 \begin{enumerate}
     \item there is a contraction $h:V\to Z$ and $V$ is $\bQ$-factorial,
     \item $V\dashrightarrow X/Z$ is an isomorphism in codimension one,
     \item $(V,\Theta+\Supp(J))$ belongs to $\cP$,
     \item $\Theta$ contains $h^*H_Z$ and the strict transform of $B$, and
     \item $J \equiv lA_V$ over the generic point of $Z$, where $A_V$ is the strict transform of $A$ on $V$.
 \end{enumerate}

\end{thm}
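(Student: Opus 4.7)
The plan is to exploit bounded families at both ends of the fibration. Since $H$ is very ample with $H^{\dim Z}\le r$, the base $Z$ varies in a bounded family together with a bounded very ample polarization. The general fiber $(F,B_F)$ is $\epsilon$-lc with coefficients in the finite set $\Phi$, satisfies $K_F+B_F\sim_{\bR}0$, and carries an integral ample divisor $A|_F$ of bounded volume; by \cite{birkarGeometryPolarisedVarieties2023}, the polarized log Calabi--Yau pairs $((F,B_F),A|_F)$ lie in a bounded family, parametrized by a strongly embedded fine moduli space $\cS$ with universal family $(\cX,\cB)\to \cS$ carrying a universal polarization.

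Because $\cS$ parametrizes polarizations in addition to the underlying pair, the induced moduli map $Z\dashrightarrow \cS$ need not be generically finite onto its image. To fix this, apply Theorem \ref{moduli part is nef and good} to the klt-trivial fibration $(\cX,\cB)\to \cS$ to obtain a smooth projective variety $\cS^!$, a generically finite cover $\cS^!\to T\subseteq \cS$ of maximal variation, and a family $(\cX^!,\cB^!)\to \cS^!$ whose moduli $\mathbf{b}$-divisor is $\mathbf{b}$-nef and big. Using Lemma \ref{lem: finite modification}, we may convert the generically finite cover into a finite cover onto a projective $S^\ast$ birational to $T$. Pulling back along the moduli map, we obtain, after replacing $Z$ by a bounded birational model, a rational map $Z\dashrightarrow \cS^!$ which is generically finite onto its image. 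Because $\cS^!$ is fixed and $Z$ is bounded, this map is controlled up to a bounded-degree generically finite cover.

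Next, we construct an auxiliary polarization on $X$. By Proposition \ref{moduli part descends on locally stable morphisms}, the Hodge line bundle on (a locally stable model of) $(\cX^!,\cB^!)\to \cS^!$ descends to a line bundle $\Lambda$ on $\cS^!$, and for some bounded positive integer $N$ we have $N(K_{\cX^!/\cS^!}+\cB^!)\sim (f^!)^*\Lambda$. Pulling $\Lambda$ back along $Z\dashrightarrow \cS^!$ and transferring via the base change $X\dashrightarrow \cX^!\times_{\cS^!}Z$ yields, after a birational modification of $X$, an effective divisor $L$ on $X$ that is numerically proportional to $A$ over the generic point of $Z$ up to a bounded multiple $\ell$ (the fiberwise polarization on $\cX^!\to\cS^!$ is that of $\cX\to\cS$, and the Hodge class recovers $A_\eta$ up to numerical equivalence via the CY relation). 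This is the key step transferring the moduli polarization data down to $X$.

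It remains to bound $(X,B+f^*H)$ and produce $(V,\Theta,J)$. Apply weak semistable reduction \cite{abramovichWeakSemistableReduction2000} and run an MMP to obtain a birational model $(X',\Delta',L')$ of $(X,B,L)$ so that $\Delta'$ contains the strict transform of $\Supp(B)$ and all exceptional divisors, $(X',\Delta'+\alpha L')$ is lc, and $K_{X'}+\Delta'+\alpha L'$ is big for some fixed $\alpha>0$. Combining the bounded map $Z\dashrightarrow \cS^!$ with invariance of plurigenera and descent of volume under the generically finite cover, bound $\vol(K_{X'}+\Delta'+\alpha L')$ from above. Then \cite{haconACCLogCanonical2014,haconBirationalAutomorphismsVarieties2013} give log birational boundedness of $(X,B+f^*H+L)$, and an MMP in family \cite{haconBoundednessModuliVarieties2018} over the bounded base produces the required $\bQ$-factorial model $V\dashrightarrow X$ isomorphic in codimension one over $Z$, with $\Theta$ picking up $h^*H_Z$ and the strict transform of $B$. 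Taking $J$ to be the strict transform on $V$ of a suitable bounded integral multiple of $L$ gives $J\equiv lA_V$ over the generic point of $Z$ for some bounded $l$. The main obstacle will be the simultaneous construction of $L$ with the required numerical proportionality to $A$ over the generic point and the upper bound on $\vol(K_{X'}+\Delta'+\alpha L')$: both must survive the passage through the generically-finite-but-not-finite moduli map, so invariance-of-plurigenera arguments and descent of volume must be carried out with care.
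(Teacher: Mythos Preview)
Your overall architecture matches the paper's: bound the fibers via Birkar's moduli of polarized Calabi--Yau pairs, pass to a family $(\cX^!,\cB^!)\to\cS^!$ of maximal variation via \cite{ambroModuliBdivisorLctrivial2005}, bound the moduli map, construct an auxiliary polarization $L$, use weak semistable reduction and an MMP to produce a model with bounded log canonical volume, then apply \cite{haconACCLogCanonical2014,haconBirationalAutomorphismsVarieties2013} and MMP in family. However, your construction of $L$ contains a genuine error that breaks the argument.

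You propose to obtain $L$ from the Hodge line bundle $\Lambda = \lambda_{\mathrm{Hodge},f^!,N}$ on $\cS^!$, claiming that ``the Hodge class recovers $A_\eta$ up to numerical equivalence via the CY relation.'' This is false. The Hodge line bundle lives on the \emph{base} $\cS^!$, and pulling it back to $X$ through $Z\dashrightarrow\cS^!$ and $f:X\to Z$ produces a divisor that is numerically trivial on every fiber of $f$. By contrast, $A$ is ample on fibers. The relation $N(K_{\cX^!/\cS^!}+\cB^!)\sim (f^!)^*\Lambda$ that you invoke says precisely that $K_{\cX^!/\cS^!}+\cB^!$ is fiberwise torsion; it carries no information about $A$. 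So nothing derived from $\Lambda$ alone can satisfy $L_g\equiv mA_g$ for $m>0$. In the paper, the Hodge line bundle is used for a different purpose: to compare moduli $\mathbf{b}$-divisors and thereby bound the moduli map $W\to\cS^*$ (their Theorem \ref{moduli map is bounded}), not to build $L$.

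The paper's construction of $L$ (Step 4 of Theorem \ref{diagram of the universal Calabi--Yau fibration}) is substantially subtler than anything in your proposal. One first equips $\cX^!$ with a polarization $\cA^!$ by pulling back the universal polarization $\cA$ along a rational section $i:\cT\dashrightarrow\cS$. Then, after arranging that $\bar\cS\to\cS$ is an \'etale Galois cover with group $G$, one averages: $\bar\cL:=\sum_{g\in G} g^*\rho_{\cX}^*\cA^!$ is $G$-invariant and descends to $\cL$ on $\cX$. The heart of the matter is to show $\cL_s\equiv |G|\,\cA_s$ on every fiber; this requires proving that the $G$-action on $\cX_s\times\bar S$ is \emph{diagonal} (using that $\mathrm{Aut}^0(\cX_s,\cB_s)$ is an abelian variety, via \cite{ambroModuliBdivisorLctrivial2005} and an argument from \cite{kollarDeformationsEllipticCalabiYau2015}), so that $\rho_{\cX}$ is $G$-equivariant and the averaged pullbacks are fiberwise numerically equivalent to $|G|\cdot\cA^!$. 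Without this, you have no divisor on $X$ that both comes from the bounded side $\cS^!$ and is numerically tied to $A$ on fibers, and the subsequent volume bound on $K_{X'}+\Delta'+\alpha L'$ and the relation $J\equiv lA_V$ over $\eta_Z$ have no content. You should also note the preliminary reduction (Lemma \ref{lem: A>0}) to $A\ge 0$ with fixed $\vol(A|_F)$, which the paper needs before invoking the moduli space.
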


\begin{lem}\label{lem: A>0}

     Assume that Theorem \ref{thm:bdd of weak pcy fibration} holds when $A$ is an effective integral divisor and $\vol(A|_F) = v$ for some fixed $v \in \bQ^{>0}$, where $F$ is the general fiber of $f:X\to Z$. Then the theorem holds in general.
     \begin{proof}
If $(F, B_F)$ is the general fiber of $f: (X, B) \to Z$ and $A_F := A|_F$, then by \cite[Theorem 1.3]{birkarGeometryPolarisedVarieties2023}, there exists a positive integer $m$ depending only on $\dim F$ and $\epsilon$ such that $H^0(F, \mathcal{O}_X(mA|_F)) \neq 0$.
By upper-semicontinuity of cohomology and \cite[Lemma 3.2.1]{birkarExistenceMinimalModels2010}, $mA \sim_Z G$ for some effective integral divisor $G$ on $X$. Replacing $A$ and $v$ with $G$ and $m^{\dim F} v$ respectively, we may assume that $A \geq 0$.
Moreover, by \cite[Corollary 1.6]{birkarGeometryPolarisedVarieties2023}, the pair $(F, \Supp (B_F + A_F))$ belongs to a log bounded family. Hence, we can assume that $\vol(A_F)$ is fixed.
     \end{proof}
 \end{lem}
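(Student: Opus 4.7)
The plan is to perform two successive reductions on the polarization $A$: first to an effective divisor, and then to one whose fiber volume takes a single fixed value, at which point the hypothesis of the lemma applies directly.

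\emph{Step 1 (effectivity).} Let $F$ be a general fiber of $f$; by adjunction $(F,B_F)$ is an $\epsilon$-lc log Calabi-Yau pair and $A_F := A|_F$ is ample integral with $\vol(A_F) \leq v$. Applying \cite[Theorem 1.3]{birkarGeometryPolarisedVarieties2023} to the polarized log Calabi-Yau pair $(F,B_F),A_F$ yields a positive integer $m$ depending only on $d,\epsilon,v$ with $h^0(F,\cO_F(mA_F)) \neq 0$, so $f_*\cO_X(mA)$ has positive generic rank. Since $Z$ is projective, by Serre vanishing on $Z$, for $n \gg 0$ the sheaf $f_*\cO_X(mA)\otimes\cO_Z(nH)\cong f_*\cO_X(mA+nf^*H)$ admits a nonzero global section, whose vanishing locus produces an effective integral divisor $G \sim mA+nf^*H$ on $X$. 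Because $nf^*H$ is a pullback from $Z$, the divisors $G$ and $mA$ define the same class in $\Pic(X/Z)$, so $G$ is still ample over $Z$. Moreover $G|_F \sim (mA+nf^*H)|_F = mA_F$, so $\vol(G|_F) = m^{\dim F}\vol(A_F) \leq m^d v$. Replace the polarization $A$ by $A' := G$ and $v$ by $v' := m^d v$. Over the generic point of $Z$ we have $A' \equiv mA$, so a conclusion $J \equiv l' A'_V$ over the generic point will translate into $J \equiv (l'm) A_V$ there.

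\emph{Step 2 (fixing the volume).} With $A'$ now effective, \cite[Corollary 1.6]{birkarGeometryPolarisedVarieties2023} applied to $(F,B_F),A'_F$ implies that the set of $(F,\Supp(B_F+A'_F))$ forms a log bounded family. Since $A'_F$ is integral with bounded support and $\vol(A'_F) \leq v'$, its coefficients are bounded positive integers, hence $\vol(A'_F)$ takes only finitely many rational values $v_1,\ldots,v_N$. Partition the set of weak $(d,\Phi,v,r,\epsilon)$-polarized log Calabi-Yau fibrations accordingly; by the hypothesis of the lemma each subclass yields a positive integer $l_i$ and a bounded set of couples $\cP_i$. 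Taking $l := m\cdot\lcm(l_1,\ldots,l_N)$ and $\cP := \bigcup_i \cP_i$ provides the required uniform output for the original general data.

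The main subtlety is the verification in Step 1 that the newly constructed polarization $A' = G$ remains ample over $Z$. This is handled by the key observation that relative ampleness depends only on the numerical class in $\Pic(X/Z)$, together with the linear equivalence $G \sim mA+nf^*H$, which makes $G$ differ from $mA$ only by a vertical pullback from $Z$. The rest is bookkeeping absorbed into the constants $m$, $v'$, and $\lcm(l_1,\ldots,l_N)$.
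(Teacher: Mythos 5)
Your proposal is correct and takes essentially the same two-step route as the paper: Birkar's effective nonvanishing result (\cite[Theorem 1.3]{birkarGeometryPolarisedVarieties2023}) to replace $A$ by an effective polarization after multiplying by a bounded $m$, and then the log boundedness of $(F,\Supp(B_F+A_F))$ (\cite[Corollary 1.6]{birkarGeometryPolarisedVarieties2023}) to reduce to finitely many possible values of $\vol(A|_F)$, absorbing the bookkeeping into $l$ and $\cP$. The only difference is cosmetic: the paper takes the horizontal part $G^h$ of some $G\sim mA$, whereas you twist by $nf^*H$ to obtain a globally effective $G\sim mA+nf^*H$, which is trivial over the generic point of $Z$ and so affects nothing in the conclusion; if anything, your variant keeps relative ampleness of the new polarization more transparently.
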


From now until the end of this section, we will assume that $A$ is an effective integral divisor and that $\vol(A|_F) = v$ for some fixed $v \in \bQ^{>0}$.
\subsection{Family of polarized log Calabi--Yau pairs}

	\begin{definition}{(\cite{birkarModuliAlgebraicVarieties2022,birkarGeometryPolarisedVarieties2023})}
	Let $d\in \bN$, $v\in\mathbb{Q}^{>0}$, and $\Phi\subset[0,1]\cap \bQ$ be a finite set.  A \textit{$(d, \Phi, v)$-polarized  log Calabi--Yau pair}  $((X, B),A)$ is defined by the data:
	\begin{enumerate}
		\item $(X, B)$ is projective slc pair of dimension  $d$ with $K_X+B\sim_{\bQ} 0$,
		\item the coefficients of $B$ are in $\Phi$,
		\item $A \geq 0$ is an ample integral divisor with volume $\vol(A) = v$,
		\item $(X,B+tA)$ is slc  for some $t \in\bQ^{ >0}$.
	\end{enumerate}
	If $(X,B)$ is klt, then $((X, B),A)$ is called a \textit{klt $(d, \Phi, v)$-polarized log Calabi--Yau pair.}
\end{definition}

Given a weak $(d,\Phi,v,r,\epsilon)$-polarized log Calabi--Yau fibration $f:((X,B),A)\to (Z,H)$, it follows that the general fiber $((F,B_F),A_F)$ of $f$ is a klt $(\dim F,\Phi,v)$-polarized  log Calabi--Yau pair, hence it is bounded by \cite[Corollary 1.6]{birkarGeometryPolarisedVarieties2023}.

In the following theorem, we use the moduli theory for polarized log Calabi--Yau pairs \cite{birkarModuliAlgebraicVarieties2022} to  construct a locally stable family of polarized log Calabi--Yau pairs $f_{\cS}:((\cX,\cB),\cA)\to \cS$ such that, over an open subset of $Z$, the fibration $f:((X,B),A)\to Z$ is obtained as the pullback of $f_{\cS}$.
We then apply \cite{ambroModuliBdivisorLctrivial2005} to $f_{\cS}$ to produce a new family $f_{\cS^!}:(\cX^!,\cB^!)\to \cS^!$ of maximal variation. As a consequence, the moduli $\mathbf{b}$-divisor $\bm{\cM}^!$ of $f_{\cS^!}$ descends to a nef and big divisor $\bm{\cM}_{\cS^!}$ on $\cS^!$, which plays a crucial role in establishing the boundedness of the moduli map in Theorem \ref{moduli map is bounded}.
 A key step in the proof of Theorem \ref{diagram of the universal Calabi--Yau fibration} is the construction of a new polarization $\cL$ on $\cX$ induced from $\cX^!$, such that $\cL_s\equiv m\cA_s$ for some bounded integer $m\in\bN$ and all $s\in \cS$. This construction allows us to prove the boundedness of the log canonical volume of a certain log general type pair in Theorem \ref{volume is bounded}, and it is also essential for the proof of Theorem \ref{mainthm2}. Finally, we establish several auxiliary results that will be used in later subsections.

\begin{thm}\label{diagram of the universal Calabi--Yau fibration}
Let $d\in \bN$,  $v\in\mathbb{Q}^{>0}$, and $\Phi\subset[0,1]\cap \bQ$ be a finite set. Let $f:(X,B)\rightarrow Z$ be a klt-trivial fibration, and let $A$ be an effective integral divisor on $X$. Assume that the general fiber $((F,B_F),A_F)$ of $f$ is a klt $(d,\Phi,v)$-polarized log  Calabi--Yau pair. 
    Then there exists a commutative diagram 
    \[\xymatrix{((X,B),A) \ar[d]^f&((X_U,B_U),A_U)\ar[d] ^{f_U} \ar@{_{(}->}[l] \ar[r]&
    ((\cX,\cB),\cA,\cL )\ar[d]^{f_{\cS}} &\overline{\cX}\ar[r]^(.3){\rho_{\cX}} \ar[l]_(.3){\tau_{\cX}} \ar[d] & ((\cX^!,\cB^!),\cL^! )\ar[d] ^{f_{\cS^!}} & \\
  Z&U\ar[r]^{\phi} \ar@{_{(}->}[l]&  \cS\ar@/_1pc/[rrd]_{\gamma}  & \overline{\cS}\ar[r]^{\rho} \ar[l]_{\tau} \ar[l] &( \cS^!,\cM^!) \ar[d]^\pi
  \\&&&& (\cS^*, \cH)
    }\]
    satisfying the following:
     
\begin{enumerate}
    \item $\cS$, $\overline{\cS}$, and $\cS^!$ are smooth schemes.

    \item $\cS^!$ and $\cS^*$ are projective schemes.

    \item  $\tau\colon \overline{\cS}\to \cS$ is finite, 
$\pi\colon \cS^!\to \cS^*$ is generically finite, 
$\rho\colon \overline{\cS}\to \cS^!$ is dominant, 
and $\gamma\colon \cS\to \cS^*$ is a morphism.

    \item The generic fiber of the base change of
    $(\cX,\cB)\to \cS$ to $\overline{\cS}$
    is isomorphic to the generic fiber of the base change of
    $(\cX^!,\cB^!)\to \cS^!$ to $\overline{\cS}$.

    \item $\overline{\cX}$ is a common resolution of the main components of
    $\cX\times_{\cS}\overline{\cS}$ and
    $\cX^!\times_{\cS^!}\overline{\cS}$.

    \item There exist $\bQ$-Cartier integral divisors $\cA$ and $\cL$ on $\cX$,
    and a $\bQ$-Cartier integral divisor $\cL^!$ on $\cX^!$,
    such that for some $m\in\bN$, depending only on $(d,\Phi,v)$, the following hold:
    \begin{itemize}
        \item $\cL_s \equiv m\cA_s$ for all $s\in \cS$, and
        \item $\tau_{\cX}^*\cL = \rho_{\cX}^*\cL^!$.
    \end{itemize}

    \item The morphisms $(\cX,\cB+\alpha\cL)\to \cS$ and $(\cX^!,\cB^!+\alpha\cL^!)\to \cS^!$ are locally stable for some $\alpha\in\bQ^{>0}$ depending only on $(d,\Phi,v)$.

    \item There exists a very ample divisor $\cH\geq 0$ on $\cS^*$ such that
    \begin{itemize}
        \item the morphism $\pi$ is étale and Galois over
        $\cS^*\setminus \Supp(\cH)$, and
        \item every fiber of
        $((\cX^!,\cB^!),\cL^!)\to \cS^!$
        over $\cS^!\setminus \Supp(\pi^*\cH)$
        is a klt $(d,\Phi,m^d v)$-polarized log Calabi--Yau pair.
    \end{itemize}

    \item The moduli $\mathbf{b}$-divisor $\bm{\cM}^!$ of
    $(\cX^!,\cB^!)\to \cS^!$ descends to $\cS^!$.
    Moreover, there exists an effective divisor
    $\cM^!\sim_{\bQ}\bm{\cM}^!_{\cS^!}$
    such that $l\cM^!$ is Cartier and
    $l\cM^!\geq \pi^*\cH$
    for some $l\in\bN$ depending only on $(d,\Phi,v)$.

    \item There exists an open subset $U\subset Z$ and a morphism
    $\phi\colon U\to \cS$ such that
    $((X_U,B_U),A_U)\to U$
    is isomorphic to the base change of
    $((\cX,\cB),\cA)\to \cS$ via $\phi$.

    \item If the composition $\gamma\circ\phi$ extends to a morphism
    $\psi\colon Z\to \cS^*$,
    then
    \[
        \psi(Z)\not\subset \pi(\Supp(\cM^!)).
    \]
\end{enumerate}
    \begin{proof}
    \begin{enumerate} [label=\textsl{Step} \arabic{enumi}., wide=13pt, itemsep=13pt]
    \item In this step, we construct a universal family $((\cX_{(1)},\cB_{(1)}),\cA_{(1)})\to \cS_{(1)}$ parametrizing the general fibers of
$f\colon ((X,B),A)\to Z$.

By \cite[Lemma~10.2]{birkarModuliAlgebraicVarieties2022}, there exists
$\alpha\in\bQ^{>0}$ and $r\in\bN$, depending only on $(d,\Phi,v)$, such that the following hold:
\begin{itemize}
    \item $(F,B_F+\alpha A_F)$ is klt for the general fiber $((F,B_F),A_F)$ of $f$, and
    \item  $r(K_F+B_F+\alpha A_F)$ is very ample without higher cohomology.
\end{itemize}
Set
$n := h^0\bigl(r(K_F+B_F+\alpha A_F)\bigr)-1 $.
Then, $r(K_F+B_F+\alpha A_F)$ defines an embedding $F\hookrightarrow \bP^n$.
Since $r(K_F+B_F+\alpha A_F)$ is very ample without higher cohomology, there exists a non-empty open subset
$U\subset Z$ such that $r(K_{X_U}+B_U+\alpha A_U)$ defines an embedding
$X_U\hookrightarrow \bP^n_U$.

By \cite[Proposition~9.5]{birkarModuliAlgebraicVarieties2022}, there exists a finite type scheme
$\cS_{(1)}$ representing the functor of strongly embedded
$(d,\Phi_{1/c},v,\alpha,r,\bP^n)$-polarized log Calabi--Yau families
(see \cite[Definition~9.3]{birkarModuliAlgebraicVarieties2022})
over reduced schemes, where $c\in\bN^{>0}$ satisfies $c\Phi\subset\bN$.
After replacing $\cS_{(1)}$ by a suitable locally closed subscheme, we may assume that
$\cS_{(1)}$ parametrizes klt $(d,\Phi,v)$-polarized log Calabi--Yau pairs.
Let
\[
((\cX_{(1)}\subset \bP^n_{\cS_{(1)}},\cB_{(1)}),\cA_{(1)})\to \cS_{(1)}
\]
be the corresponding universal family.
Then 
$(\cX_{(1)},\cB_{(1)}+\alpha\cA_{(1)})\to \cS_{(1)}$
is locally stable and 
$K_{\cX_{(1)}}+\cB_{(1)} \sim_{\bQ,\cS_{(1)}} 0 $.
Moreover, there exists a moduli map $\phi\colon U\to \cS_{(1)}$ such that
the restriction $((X_U,B_U),A_U)\to U$ is isomorphic to the pullback of the universal family
via $\phi$.

        	\item   In this step, we apply Theorem \ref{moduli part is nef and good} to the universal family  obtained in Step 1.
        	
             By applying Theorem \ref{moduli part is nef and good} to a projective compactification of $(\cX_{(1)},\cB_{(1)}) \rightarrow \cS_{(1)}$, we have a non-singular quasi-projective variety  $\overline{\cS}_{(1)}$, non-singular projective varieties $\cT$ and $\cV$, and a commutative diagram
        \[\xymatrix{
			(\cX_{(1)},\cB_{(1)})  \ar[d]_{f_{\cS_{(1)}}} & &   (\cX_{\cT},\cB_{\cT}) \ar[d]_{f_{\cT}} &\\
			\cS_{(1)} 	\ar@/_1pc/@{-->}[rrr] _\gamma &  \overline{\cS}_{(1)} \ar[l]_{\tau} \ar[r]^\rho& 	 \cT \ar@/_1pc/@{-->}[ll]_i  \ar[r]^\pi & \cV,
		}\]
		such that
		
        \begin{itemize}
			\item $(\cX_{\cT},\cB_{\cT})\rightarrow \cT$ is a klt-trivial fibration,
			\item $\tau:\overline{\cS}_{(1)} \rightarrow \cS_{(1)}$ and $\pi:\cT\rightarrow \cV$ are generically finite, surjective morphisms, $\rho:\overline{\cS}_{(1)} \rightarrow \cT$ is a dominant morphism,
			\item there exists a nonempty open subset $\cU\subset \overline{\cS}_{(1)}$ and an isomorphism
			\[\xymatrix{
				(\cX_{(1)},\cB_{(1)})\times_{\cS_{(1)}} \cU  \ar[rr] ^{\cong}\ar[dr]& &   (\cX_{\cT},\cB_{\cT})\times_{\cT} \cU \ar[dl]\\
				&  \cU& 	 
			}\]
			
			\item the moduli $\mathbf{b}$-divisor of $f_{\cT}$ is $\mathbf{b}$-nef and big,
			\item $\gamma:\cS_{(1)}\dashrightarrow \cV$  is bimeromorphic to the period map defined in \cite[Proposition 2.1]{ambroModuliBdivisorLctrivial2005}, and 
			\item $i:\cT\dashrightarrow \cS_{(1)}$ is a generically finite rational map such that $f_{\cT}:(\cX_{\cT},\cB_{\cT})\to\cT$ is equal to the pullback of  $f_{\cS_{(1)}}$ via $i$   over some open subset of $\cT$.
		\end{itemize}
  
  	\item In this step, we shrink $\cS_{(1)}$ and construct a smooth projective variety $\cS^!$ over which $(\cX^!,\cB^!)\to \cS^!$ is a locally stable family 
    of maximal variation. Then, we verify  (1)--(4).
  	
        Let $\cS_{(2)}$ be an open subset of $\cS_{(1)}$ and $\overline{\cS}_{(2)}$ be an  open subset of $\cU$ such that
        \begin{itemize}
        \item  $\cS_{(2)}$ is smooth,
        	\item $\gamma$ is a morphism on $\cS_{(2)}$, 
        	\item $\overline{\cS}_{(2)}\rightarrow \cS_{(2)}$ is a finite cover, and
        	\item $i|_{\cT^o}:\cT^o\to \cS_{(2)}$ is a finite morphism for some open subset $\cT^o$ of $\cT$.
        \end{itemize}
         Let $((\cX_{(2)},\cB_{(2)}),\cA_{(2)})\rightarrow \cS_{(2)}$ be the corresponding base change. Then,  the pullback of $(\cX_{(2)},\cB_{(2)}+\alpha\cA_{(2)})\rightarrow \cS_{(2)}$ via $i$ defines a locally stable morphism  $(\cX_{\cT^o},\cB_{\cT^o}+\alpha\cA_{\cT^o})\rightarrow \cT^o$.

        By \cite[Lemma 4]{kollarModuliPolarizedCalabi2020}, there exists a generically finite cover $\overline{\cT}^o\rightarrow \cT^o$ and a  compactification $\overline{\cT}^o\hookrightarrow \cS^!$ such that the pullback of $(\cX_{\cT^o},\cB_{\cT^o}+\alpha\cA_{\cT^o})\rightarrow \cT^o$ on $\overline{\cT}^o$ extends to a locally stable morphism 
        $(\cX^!,\cB^!+\alpha\cA^!)\rightarrow \cS^!$.
        
        By Lemma \ref{lem: finite modification}, after replacing $\cS^!$  with a generically finite cover from a smooth projective variety and $(\cX^!,\cB^!+\alpha\cA^!)\rightarrow \cS^!$  with the corresponding base change,
        we may assume that there exists a birational map $\cS^*\dashrightarrow \cV$ such that $\pi:\cS^!\to \cS^*$  is a finite cover.
           Replacing $\cS_{(2)}$ by an open subset and shrinking $\overline{\cS}_{(2)}$ accordingly, we may assume that $\gamma:\cS_{(2)}\to \cS^*$ is a morphism.
        
After replacing $\overline{\cS}_{(2)}$ by a finite cover, we may assume that $\overline{\cS}_{(2)}\rightarrow \cS^!$ is a dominant morphism. In this case, we have an isomorphism
\[
(\cX_{(2)}, \cB_{(2)}) \times_{\cS_{(2)}} \overline{\cS}_{(2)} \cong (\cX^!, \cB^!) \times_{\cS^!} \overline{\cS}_{(2)}.
\]

Next, after replacing $\overline{\cS}_{(2)}$ by another finite cover, we may assume that $\overline{\cS}_{(2)} \rightarrow \cS_{(2)}$ is a Galois cover with Galois group $G$. Replacing $\cS_{(2)}$ by an open subset and shrinking $\overline{\cS}_{(2)}$ accordingly, we may assume that $\overline{\cS}_{(2)} \rightarrow \cS_{(2)}$ is an étale Galois cover. Therefore, $\overline{\cS}_{(2)}$ is smooth.

  	\item In this step, we construct new polarizations $\cL_{(2)}$ and $\cL^!$ on $\cX_{(2)}$  and $\cX^!$ respectively that satisfy (6).

        Consider the following diagram:
\[
\xymatrix{
\cX_{(2)} \ar[d]
  & \cX_{(2)}\times_{\cS_{(2)}} \overline{\cS}_{(2)} \overset{\Psi}{\cong} \cX^!\times_{\cS^!}\overline{\cS}_{(2)}
    \ar[l]_-{\tau_{\cX}}
    \ar[r]^-{\rho_{\cX}}
    \ar[d]
  & \cX^! \ar[d] \\
\cS_{(2)}\ar@/_1pc/[rrr]_{\gamma}
  & \overline{\cS}_{(2)}
    \ar[l]_{\tau}
    \ar[r]^{\rho}
  & \cS^!\ar[r]^\pi & \cS^*
}
\]
Recall that $\overline{\cS}_{(2)} \rightarrow \cS_{(2)}$ is an \'etale Galois cover with Galois group $G$.  
For each $g \in G$ acting on $\overline{\cS}_{(2)}$, let 
\[
\sigma'_g \colon \cX_{(2)} \times_{\cS_{(2)}} \overline{\cS}_{(2)}
\longrightarrow 
\cX_{(2)} \times_{\cS_{(2)}} \overline{\cS}_{(2)}
\]
be the morphism induced by the base change $\text{id}_{\cX_{(2)}} \times_{\cS_{(2)}} g$, and let
\[
\sigma_g \colon \cX^! \times_{\cS^!} \overline{\cS}_{(2)}
\longrightarrow 
\cX^! \times_{\cS^!} \overline{\cS}_{(2)}
\]
be the morphism $\Psi \circ \sigma'_g \circ \Psi^{-1}$.  
Then the action of $G$ on  $\cX^! \times_{\cS^!} \overline{\cS}_{(2)}$ is $G$-equivariant with respect to the projection
$\cX^! \times_{\cS^!} \overline{\cS}_{(2)} \rightarrow \overline{\cS}_{(2)}$, and
$\cX^! \times_{\cS^!} \overline{\cS}_{(2)} \rightarrow \cX_{(2)}$
is also an \'etale Galois cover with Galois group $G$.
Let   \[\overline{\cL}_{(2)}:=\sum_{g\in G}g^* \rho_{\cX}^* \cA^!,\]
        then $\overline{\cL}_{(2)}$ is $G$-invariant, and hence there exists an effective $\mathbb{Q}$-Cartier integral divisor $\cL_{(2)}$ on $\cX_{(2)}$ such that $\overline{\cL}_{(2)}=\tau_{\cX}^* \cL_{(2)}$.

Denote by $\cS^!_{(2)}$ and $\cS^*_{(2)}$ the images of $\overline{\cS}_{(2)}$ in $\cS^!$ and $\cS^*$, respectively, and let $\cX_{\cS^!_{(2)}}$ be the base change of $\cX^!$ over $\cS^!_{(2)}$.
Let $v \in \cS^*_{(2)}$ be a closed point. Then $\pi^{-1}(v)$ is a finite set of points since $\pi:\cS^!\to \cS^*$ is finite. Let $s \in \pi^{-1}(v)$ be a closed point.
Set $S := \gamma^{-1}(v)$ and $\overline{S} := \tau^{-1}(S)$.
Let $(\cX_s,\cB_s)\to s$, $(\cX_S,\cB_S)\to S$, and $(\cX_{\overline{S}},\cB_{\overline{S}})\to \overline{S}$ be the families obtained by base change, where \[(\cX_{\overline{S}},\cB_{\overline{S}}):=(\cX_S,\cB_S) \times_S \overline{S} \overset{\Psi}{\cong} (\cX_s,\cB_s) \times \overline{S}.\]
Now the group $G$ acts on $(\cX_s,\cB_s) \times \overline{S}$, and this action is $G$-equivariant with respect to the projection $(\cX_s,\cB_s)\times \overline{S} \to \overline{S}$.

By Proposition~\ref{:prop:diagonal}, there exists a Galois cover
$\widetilde{\tau}\colon \widetilde{S}\to S$ with Galois group $H$,
which also acts on $\cX_s$, such that
\[
(\cX_{\widetilde{S}},\cB_{\widetilde{S}})
:= (\cX_S,\cB_S) \times_S \widetilde{S}
\cong (\cX_s,\cB_s) \times \widetilde{S}.
\]
Moreover,
\[
(\cX_S,\cB_S)\simeq \bigl((\cX_s,\cB_s)\times \widetilde{S}\bigr)/H,
\]
where $H$ acts diagonally on $(\cX_s,\cB_s)\times \widetilde{S}$.
After replacing $\widetilde{S}$ by a higher Galois cover, we may assume
that $\widetilde{\tau}$ factors through $\overline{S}$.
We have the following diagram:
\[
\xymatrix{
&&(\cX_{\widetilde{S}},\cB_{\widetilde{S}})\ar[dd]\ar[ld]^{\widetilde{\pi}_{\cX}}\ar[rd]^{\widetilde{\rho}_{\cX}}\ar[lld]_{\widetilde{\tau}_{\cX}}
\\
(\cX_S,\cB_S) \ar[dd]
  & (\cX_{\overline{S}},\cB_{\overline{S}})
    \ar[l]_-{\tau_{\cX}}
    \ar[rr]^-{\rho_{\cX}\hspace{2em}}
    \ar[dd]
  & &\bigcup_{s\in\pi^{-1}(v)}(\cX_s,\cB_s) =:(\cX_v,\cB_v) \ar[dd] \\
&&\widetilde{S}\ar[ld]^{\widetilde{\pi}}\ar[rd]^{\widetilde{\rho}}\ar[lld]_{\widetilde{\tau}\hspace{2em}}
  \\
S
  & \overline{S}
    \ar[l]_{\tau}
    \ar[rr]^{\rho}
  && \pi^{-1}(v)
}
\]
Denote $\cA^!_v := \cA^!|_{\cX_v}$. 
Then, for each $g \in G$, there exists $h \in H$, which is a lift of $g$, such that
\[
\widetilde{\pi}_{\cX}^* g^* \rho_{\cX}^* \cA^!_v
= h^* \widetilde{\pi}_{\cX}^* \rho_{\cX}^* \cA^!_v
= h^* \widetilde{\rho}_{\cX}^* \cA^!_v
= \widetilde{\rho}_{\cX}^* h^* \cA^!_v,
\]
which is vertical over $\cX_v$ via $\widetilde{\rho}_{\cX}$.
Here the last equality follows from the fact that $H$ acts diagonally on $(\cX_s,\cB_s) \times \widetilde{S}$. 
Therefore, $(\widetilde{\pi}_{\cX})_* \widetilde{\pi}_{\cX}^* g^* \rho_{\cX}^* \cA^!_v$ is also vertical over $\cX_v$ via $\rho_{\cX}$. 
Hence,
\[
\overline{\cL}_{(2),v} := \overline{\cL}_{(2)}|_{\cX_{\overline{S}}} 
= \sum_{g \in G} g^* \rho_{\cX}^* \cA^!_v
\]
is vertical over $\cX_v$ via $\rho_{\cX}$.
Since $v$ can be any closed point of $\cS^*_{(2)}$, it follows that $\overline{\cL}_{(2)}$ is vertical over $\cX_{\cS^!_{(2)}}$.
Let
\[
\overline{\cS}_{(2)} \longrightarrow \overline{\cS}^!_{(2)} \longrightarrow \cS^!_{(2)}
\]
be the Stein factorization of $\rho\colon \overline{\cS}_{(2)} \to \cS^!_{(2)}$.
Replacing $\cS^!_{(2)}$ by $\overline{\cS}^!_{(2)}$, we may assume that $\rho$ has connected fibers. After shrinking $\cS^!_{(2)}$, and then replacing $\cX_{\cS^!_{(2)}}$ accordingly, 
we may assume that
\[
\rho_{\cX}\colon \cX^! \times_{\cS^!} \overline{\cS}_{(2)} \to \cX_{\cS^!_{(2)}}
\]
is equidimensional with integral fibers.
Therefore, there exists a divisor $\cL^!_{(2)}$ on $\cX_{\cS^!_{(2)}}$ such that
$\overline{\cL}_{(2)} = \rho_{\cX}^* \cL^!_{(2)}$.
Let $\cL^!$ be the closure of $\cL^!_{(2)}$ on $\cX^!$,
then we have
\[
\tau_{\cX}^* \cL_{(2)} = \rho_{\cX}^* \cL^!.
\]
        
Note that $i$ is a morphism on $\cS^!_{(2)}$, and let $\overline{\cT}$ be the preimage of
$i(\cS^!_{(2)})$ in $\overline{\cS}$.
Since $\cA^! = i^*\cA_{(2)}$, we have
\[
\rho_{\cX}^*\cA^!|_{\overline{\cT}}
= \rho_{\cX}^* i^*\cA_{(2)}|_{\overline{\cT}}
= \tau_{\cX}^*\cA_{(2)}|_{\overline{\cT}}.
\]
By Lemma~\ref{lem:num trivial}, it follows that
\[
(\rho_{\cX}^*\cA^!)_s \equiv (\tau_{\cX}^*\cA_{(2)})_s
\quad \text{for all } s \in \overline{\cS}_{(2)}.
\]
Moreover, since $\tau_{\cX}$ is the quotient morphism by $G$, the divisor
$\tau_{\cX}^*\cA_{(2)}$ is $G$-invariant. Hence,
\[
\tau_{\cX}^*\cA_{(2)}
= \frac{1}{|G|} \sum_{g\in G} g^*\tau_{\cX}^*\cA_{(2)}.
\]
Therefore, for any $s \in \overline{\cS}_{(2)}$, we obtain
\[
(\tau_{\cX}^*\cL_{(2)})_s
= (\overline{\cL}_{(2)})_s
\equiv \biggl(\sum_{g\in G} g^* \rho_{\cX}^* \cA^!\biggr)_s
\equiv \biggl(\sum_{g\in G} g^*\tau_{\cX}^*\cA_{(2)}\biggr)_s
= |G|(\tau_{\cX}^*\cA_{(2)})_s.
\]
Since the morphism $\overline{\cS}_{(2)} \to \cS_{(2)}$ is surjective, we conclude that
\[
(\cL_{(2)})_s \equiv |G|(\cA_{(2)})_s
\quad \text{for all } s \in \cS_{(2)}.
\]

   \item In this step, we verify  (7)--(9).
   
   By the construction in the previous step, the general fiber of $((\cX_{(2)},\cB_{(2)}),\cL_{(2)}) \to \cS_{(2)}$ is a $(d,\Phi,|G|^d v)$-polarized log Calabi--Yau pair.
After replacing $\cS_{(2)}$ with an open subset and decreasing $\alpha$, we may assume that $(\cX_{(2)},\cB_{(2)}+\alpha\cL_{(2)}) \to \cS_{(2)}$ is locally stable.
Applying \cite[Lemma 4]{kollarModuliPolarizedCalabi2020} to an open subset $(\cS^!)^o \subset \cS^!$ over which $(\cX^!,\cB^!+\alpha\cL^!) \to \cS^!$ is locally stable, and then replacing $\overline{\cS}_{(2)}$ with a finite cover, we may assume that $(\cX^!,\cB^!+\alpha\cL^!) \to \cS^!$ is locally stable and that $\overline{\cS}_{(2)} \to \cS^!$ is dominant.
In this process, we may have lost the local stability of 
$(\cX^!,\cB^!+\alpha\cA^!) \to \cS^!$ and the finiteness of 
$\pi:\cS^!\to \cS^*$; however, these properties will not be used later.  
Therefore, (7) holds.

      For (8),  let $\cH\geq 0$ be a very ample divisor on $\cS^*$. Because $\cS^!\rightarrow \cS^*$ is  generically finite, $\pi^*\cH$ is a big divisor on $\cS^!$. Then we can choose $\cH$ general such that
        \begin{itemize}
            \item $\pi$ is \'etale and Galois over $\cS^*\setminus \Supp(\cH)$, and
            \item every fiber of $((\cX^!,\cB^!),\cL^!)\rightarrow \cS^!$ over $\cS^!\setminus \Supp(\pi^*\cH)$ is a klt $(d,\Phi,v)$-polarized log Calabi--Yau pair.
        \end{itemize}
        
       Now, we address (9). Since $\cS^!$ is smooth and  $(\cX^!,\cB^!)\rightarrow \cS^!$ is locally stable of maximal variation, by Proposition \ref{moduli part descends on locally stable morphisms}, the moduli \textbf{b}-divisor $\bm{\cM}^!$ descends to a  nef and big divisor $\bm{\cM}^!_{\cS^!}$  on $\cS^!$.  We can choose a general member $0\leq \cM^!\in |\bm{\cM}^!_{\cS^!}|_{\mathbb{Q}}$ such that $l\cM^!$ is Cartier and
           $\pi^*\cH \leq l\cM^!$ for some $l\in \bN$ depending only on $(d,\Phi,v)$.

      Let  $\cS_{(3)}$ be the open subset of $\cS_{(2)}$ such that $\gamma^{-1}(\pi(\Supp(\cM^!)))\cap \cS_{(3)}=\emptyset$  and $\overline{\cS}_{(3)}$ be the preimage of $\cS_{(3)}$. 
         Let $((\cX_{(3)},\cB_{(3)}),\cA_{(3)}, \cL_{(3)})\rightarrow \cS_{(3)}$  and $\overline{\cX}_{(3)}\rightarrow \overline{\cS}_{(3)}$  be the corresponding base change.

  	\item In this step,  we construct $\cS$ and verify (10) and (11).
         
Note that  $\cS_{(3)}$ is an open subset of $\cS_{(1)}$, and the moduli map 
$\phi \colon U \to \cS_{(1)}$ obtained in Step~1 may map into 
$\cS_{(1\setminus3)} := \cS_{(1)} \setminus \cS_{(3)}$. Since the constructions in Steps~2--5 depend only on the given family, 
they apply equally to the induced family 
$((\cX_{(1\setminus3)},\cB_{(1\setminus3)}),\cA_{(1\setminus3)}) \to \cS_{(1\setminus3)}$ over the complement locus.
By Noetherian induction, after repeating this process finitely many times, we obtain a locally closed decomposition 
$\cS=\bigcup \cS_j$ of $\cS_{(1)}$. 
Let $\overline{\cS}_j$, $\cS^!_j$, and $\cS^*_j$ be the schemes constructed in Steps~2--5 corresponding to $\cS_j$.
Let $\overline{\cS}=\bigcup \overline{\cS}_j$, and replace $\cS^!$ and $\cS^*$ by 
$\bigcup \cS^!_j$ and $\bigcup \cS^*_j$, respectively.
Then we obtain the following diagram: 
  \[\xymatrix{
	((\cX,\cB),\cA,\cL )\ar[d] &\overline{\cX}\ar[r]^(.3){\rho_{\cX}} \ar[l]_(.3){\tau_{\cX}} \ar[d] & ((\cX^!,\cB^!),\cL^!) \ar[d]  & \\
	\cS\ar@/_1pc/[rrr]_{\gamma}  & \overline{\cS}\ar[r]^{\rho} \ar[l]_{\tau} \ar[l] & \cS^! \ar[r]^\pi & \cS^*,
}\]
        that satisfies the requirements (1)--(9), where $\overline{\cX}$ is the  common resolution of the main components of $\cX \times_\cS \overline{\cS}$ and $\cX^! \times_{\cS^!} \overline{\cS}$.
       
Recall  that  in Step 1, $((X_U,B_U),A_U)\to U$ is isomorphic to the pullback of $((\cX_{(1)},\cB_{(1)}),\cA_{(1)})\to \cS_{(1)}$ via the moduli map $\phi \colon U \to \cS_{(1)}$. After replacing $U$ by an open subset, we may assume that $\phi$ induces a morphism $\phi \colon U \to \cS$. Then $((X_U,B_U),A_U)\to U$ is isomorphic to the pullback of $((\cX,\cB),\cA)\to \cS$ via $U \to \cS$. Therefore, (10) follows.

Finally, we deal with  (11). Suppose that $\gamma\circ \phi$ extends to a morphism $\psi:Z\rightarrow \cS^*$. By the construction of $\cS_{(3)}$, we have $\gamma^{-1}(\pi(\Supp(\cM^!)))=\emptyset$. Since $\psi|_U$ factors through $\cS$, we have $\psi(Z)\not \subset \pi(\Supp(\cM^!))$.
    \end{enumerate}
    \end{proof}
\end{thm}

\subsection{Boundedness of moduli map}
In this subsection, we construct a birational model $(W,D)$ of $Z$ such that $(W,D)$ is log bounded and the map $W \dashrightarrow \cS^*$ induced by the moduli map $Z \dashrightarrow \cS^*$ is a bounded morphism.

\begin{thm}\label{moduli map is bounded}
    Let $d\in \bN$, $v,r,\epsilon\in \mathbb{Q}^{>0}$, and $\Phi\subset[0,1]\cap \bQ$ be a finite set. 
       Let  $f:((X, B),A)\rightarrow (Z, H)$ be a weak $(d,\Phi,v,r,\epsilon)$-polarized log Calabi--Yau fibration.  
    Let \[\xymatrix{((X,B),A)  \ar@{-->}[r] \ar[d]&
     	((\cX,\cB),\cA,\cL )\ar[d] &\overline{\cX}\ar[r]^(.3){\rho_{\cX}} \ar[l]_(.3){\tau_{\cX}} \ar[d] & ((\cX^!,\cB^!),\cL^! )\ar[d]  & \\
     	(Z,H)\ar@{-->}[r]_{\phi}&  \cS\ar@/_1pc/[rrr]_{\gamma}  & \overline{\cS}\ar[r]^{\rho} \ar[l]_{\tau} \ar[l] & (\cS^!,\cM^!) \ar[r]^\pi & (\cS^*, \cH)
     }\]
   be  the commutative diagram obtained in Theorem \ref{diagram of the universal Calabi--Yau fibration}.  
   Then there exists a birational morphism $h:W\to Z$ from a normal projective variety $W$ and a reduced divisor $D$ on $W$ such that
   \begin{enumerate}
   \item the induced rational map $\psi_W:W\dashrightarrow \cS^*$ is a morphism,

      \item $D\supset \Supp(h_*^{-1}B_Z+E+\psi_W^*\cH)$, where $B_Z$ is the discriminant $\bQ$-divisor with respect to $f:(X,B)\to Z$, and $E$ is the sum of reduced exceptional divisors of $h$, and 
      \item $K_W+D-h^*H$ is big.
   \end{enumerate}
   Moreover, the set of such pairs $(W,D)$ is log bounded, and the morphism $\psi_W:W\to \cS^*$ is bounded.

    \begin{proof}
    \begin{enumerate}
     [label=\textsl{Step} \arabic{enumi}., wide=13pt, itemsep=13pt]
     
\item  In this step,  we construct a birational model $W$ of $Z$ such that $W\dashrightarrow Z$ and $W\dashrightarrow \cS^*$ are morphisms.
     	
Since the coefficients of $B$ belong to the finite set $\Phi$, by \cite[Lemma 6.7]{birkarVariationsGeneralisedPairs2022}, there exists $q\in\bN$ and $\delta\in\bQ^{>0}$ depending only on $d, \Phi, v, \epsilon$, such that we can write the canonical bundle formula
\[
q(K_X + B) \sim qf^*(K_Z + B_Z + \M_Z)
\]
such that $(Z, B_Z,\M)$ is a $\delta$-lc generalized pair, $qB_Z$ is integral, and $q\M_{Z'}$ is Cartier, where $\M_{Z'}$ is the moduli divisor on any sufficiently high resolution $Z' \to Z$. In particular, the coefficients of $B_Z$ belong to a fixed finite set $\cI$. Replacing $\cI$ by $\cI \cup \{1 - \frac{\delta}{2} \}$, we may assume that $1 - \frac{\delta}{2} \in \cI$.

      Let $g: Z' \to Z$ be a log resolution of $(Z, B_Z)$ such that the moduli \textbf{b}-divisor $\M$ of $f$ descends to $Z'$, and the rational map $\gamma \circ \phi: Z \dashrightarrow \cS^*$ extends to a morphism $\psi': Z' \to \cS^*$. In particular, $\M_{Z'}$ is nef.
Define
\[
B_{Z'} := g_*^{-1} B_Z + (1 - \frac{\delta}{2} ) E_{Z'},
\]
where $E_{Z'}$ is the sum of all reduced $g$-exceptional divisors. Since $(Z, B_Z,\M)$ is $\delta$-lc, it follows that
\[
K_{Z'} + B_{Z'} + \M_{Z'} - g^*(K_Z + B_Z + \M_Z)
\]
is effective and has the same support as $E_{Z'}$. Moreover, the coefficients of $B_{Z'}$ belong to the finite set $\cI$.

  By the boundedness of the length of extremal rays, $K_Z + B_Z + \M_Z + 3dH$ is ample. Since 
\[
K_{Z'} + B_{Z'} + \M_{Z'} - g^*(K_Z + B_Z + \M_Z)
\]
is effective, it follows that
\[
K_{Z'} + B_{Z'} + \M_{Z'} + 3d g^*H + 3d \psi'^*\cH
\]
is big. Consider $(Z', B_{Z'},\M+3d \overline{g^*H} + 3d \overline{\psi'^*\cH})$ as a  $\frac{\delta}{2}$-lc generalized pair with nef part $\M+ 3d \overline{g^*H} + 3d\overline{\psi'^*\cH}$. By \cite[Lemma 4.4]{birkarEffectivityIitakaFibrations2016}, the divisor 
\[
K_{Z'} + B_{Z'} + \M_{Z'} + 3d g^*H + 3d \psi'^*\cH
\]
admits a generalized log canonical model $Z' \dashrightarrow W$. In particular, the birational map $Z' \dashrightarrow W$ does not extract any divisor.
Since $d \geq \dim Z'$, the boundedness of the length of extremal rays ensures that the birational map $Z' \dashrightarrow W$ is automatically over both $Z$ and $\cS^*$, inducing morphisms $h: W \to Z$ and $\psi_W: W \to \cS^*$. 
Let $B_W$ be the pushforward of $B_{Z'}$. Then 
\[
\Supp(B_W) \supset \Supp(h_*^{-1} B_Z + E),
\]
where $E$ is the sum of reduced exceptional divisors of $h$.
 \[\xymatrix{\overline{Z}\ar[d] \ar[r]_{\pi_{Z'}} \ar[rd]_{\overline{\psi}}&Z' \ar[r]_{g} \ar@{-->}@/^1pc/[rr]
	 \ar[rd]_{\psi'} &Z \ar@{-->}[d]^{\psi}&W \ar[l]^h\ar[ld]^{\psi_W}& \\
	\overline{\cS}\ar[r]&\cS^!\ar[r]_{\pi}&   \cS^*}\]
 
\item In this step, we show that $l\M_{Z'}-\psi'^*\cH$ is pseudo-effective for some  $l\in \bN$  depending only on $(d,\Phi,v)$.

    Let $\pi_{Z'}:\overline{Z}\rightarrow Z'$ be a generically finite cover from a smooth variety $\overline{Z}$ such that
    \begin{itemize}
        \item  $\psi':Z'\rightarrow \cS^*$ lifts to a morphism $\overline{\psi}:\overline{Z}\rightarrow \cS^!$ which factors through
$\overline{\cS}$, and
        \item the generic fiber of $(X,B)\times_Z \overline{Z}\rightarrow \overline{Z}$ is isomorphic to the generic fiber of $(\cX^!,\cB^!)\times_{\cS^!}\overline{Z}\rightarrow \overline{Z}$.
    \end{itemize}
Since $(\cX^!,\cB^!) \to \cS^!$ is locally stable over the smooth base $\cS^!$, the morphism 
\[
(\cX^!,\cB^!) \times_{\cS^!} \overline{Z} \to \overline{Z}
\]
is also locally stable over the smooth base $\overline{Z}$. By parts (2) and (3) of Proposition \ref{moduli part descends on locally stable morphisms}, the moduli \textbf{b}-divisor $\overline{\M}$ of $(\cX^!,\cB^!) \times_{\cS^!} \overline{Z} \to \overline{Z}$ descends to $\overline{Z}$ and satisfies
\[
\overline{\psi}^*  \cM^! \sim_{\mathbb{Q}} \overline{\M}_{\overline{Z}}.
\]
 Let $\widetilde{\M}$ be the moduli $\mathbf{b}$-divisor of
$(X,B)\times_Z \overline{Z} \to \overline{Z}$.
Since the generic fiber of
$(X,B)\times_Z \overline{Z} \to \overline{Z}$
is isomorphic to the generic fiber of
$(\cX^!,\cB^!)\times_{\cS^!}\overline{Z} \to \overline{Z}$,
it follows from \cite[Lemma~3.5]{birkarAntipluricanonicalSystemsFano2019}
that $\widetilde{\M}_{\overline{Z}} \simeq \overline{\M}_{\overline{Z}}$. We may still denote the moduli \textbf{b}-divisor of $(X,B) \times_Z \overline{Z} \to \overline{Z}$ by $\overline{\M}$ without confusion, and it descends to $\overline{Z}$.

Since $\pi_{Z'}: \overline{Z} \to Z'$ is a generically finite cover and $\M$ descends to $Z'$, it follows from Proposition \ref{amb05prop3.1} that
\[
\overline{\M}_{\overline{Z}} \sim_{\mathbb{Q}} \pi_{Z'}^* \M_{Z'}.
\]
By parts (9) and (11) of Theorem \ref{diagram of the universal Calabi--Yau fibration}, there exists $l \in \bN$ depending only on $(d,\Phi,v)$ such that 
\[
\pi^* \cH \leq l \cM^!,
\]
and $\psi'(Z') \not\subset \pi(\Supp(\cM^!))$. Then, we obtain
\[
\pi_{Z'}^* l \M_{Z'} \sim_{\mathbb{Q}} l \overline{\M}_{\overline{Z}} \sim_{\mathbb{Q}} \overline{\psi}^* l \cM^! \geq \overline{\psi}^* \pi^* \cH \sim_{\mathbb{Q}} \pi_{Z'}^* \psi'^* \cH.
\]
Therefore, $l \M_{Z'} - \psi'^* \cH$ is pseudo-effective.

\item In this step, we show that $\vol(K_W + B_W + \M_W + 3d h^*H + 3d \psi_W^*\cH)$ is bounded from above.

Since $Z'\dashrightarrow W$ is the generalized log canonical model of 
\[
K_{Z'}+B_{Z'}+\M_{Z'}+ 3d g^*H +3d \psi'^*\cH,
\]
and $l\M_{Z'}-\psi'^*\cH$ is pseudo-effective by Step 2, we have
\begin{equation}
	\begin{aligned}
		&\vol(K_W+B_W+\M_W+3d h^*H+3d \psi_W^*\cH) \\
		\leq & \vol(K_{Z'}+B_{Z'}+\M_{Z'}+ 3d g^*H+3d \psi'^*\cH) \\
		\leq & \vol(K_{Z'}+B_{Z'}+(3dl+1)\M_{Z'}+ 3d g^*H).
	\end{aligned}
\end{equation}

By Step 1, $q\M_{Z'}$ is Cartier. Hence, replacing $l$ with $ql$, we may assume that 
\[
l(\M_{Z'}+ 3d g^*H)
\]
is Cartier. 

Since the coefficients of $B_{Z'}$ belong to the finite set $\cI$, by \cite[Theorem 8.1]{birkarEffectivityIitakaFibrations2016}, there exists $e\in (0,1)$ depending only on $d, \cI, l$ such that 
\[
K_{Z'}+B_{Z'}+e\M_{Z'}+3d g^*H
\]
is big. Choose $\lambda\in (0,1)$ such that
\[
\lambda e +(1-\lambda)(3dl+1) = 1.
\]
Then, we have
\begin{equation}	\nonumber
	\begin{aligned}
		&\lambda(K_{Z'}+B_{Z'}+e\M_{Z'}+ 3d g^*H)\\
		+&(1-\lambda)(K_{Z'}+B_{Z'}+(3dl+1)\M_{Z'}+ 3d g^*H)\\
		=&K_{Z'}+B_{Z'}+\M_{Z'}+ 3d g^*H.
	\end{aligned}
\end{equation}
Thus, we obtain
\begin{equation}
	\begin{aligned}
		&\vol(K_{Z'}+B_{Z'}+(3dl+1)\M_{Z'}+ 3d g^*H) \\
		\leq & \frac{1}{(1-\lambda)^d} \vol(K_{Z'}+B_{Z'}+\M_{Z'}+ 3d g^*H).
	\end{aligned}
\end{equation}

By the definition of the weak polarized log Calabi--Yau fibration, $H-(K_Z+B_Z+\M_Z)$ is pseudo-effective. Since 
\[
K_{Z'}+B_{Z'}+\M_{Z'}-g^*(K_Z+B_Z+\M_Z)
\]
is effective and exceptional over $Z$, and $\dim Z\leq d$, we have 
\begin{equation}
	\begin{aligned}
		&\vol(K_{Z'}+B_{Z'}+\M_{Z'}+3d g^*H) \\
		= &\vol(K_Z+B_Z+\M_Z+3d H) \\
		\leq & (3d+1)^d H^{\dim Z} \\
		\leq & (3d+1)^d r.
	\end{aligned}
\end{equation}

Combining equations (3.1)--(3.3), we conclude that
\[
\vol(K_W+B_W+\M_W+3d h^*H+3d \psi_W^*\cH) \leq \frac{(3d+1)^d}{(1-\lambda)^d} r.
\]

\item In this step, we show that  $W$ belongs to a bounded family. Moreover,   the morphism $\psi_W: W \to \cS^*$ is also bounded.

Since $Z'$ is smooth and $\M$ descends to $Z'$, after replacing $Z'$ with a higher model so that $Z' \to W$ is a morphism,  
       $(W,B_W,\M_W+3d \overline{h^*H}+3d \overline{\psi_W^*\cH})$ is a $\frac{\delta}{2}$-lc generalized pair with nef part $\M+ 3d \overline{g^*H} +3d \overline{\psi'^*\cH}$, satisfying the following conditions:
\begin{itemize}
    \item The coefficients of $B_W$ belong to the finite set $\cI$,
    \item $l(\M_{Z'}+ 3d g^*H +3d \psi'^*\cH)$ is Cartier, and
    \item $K_W+B_W+\M_W+3d h^*H+3d \psi_W^*\cH$ is ample with bounded volume,
\end{itemize}
it follows from \cite[Lemma 6.6]{birkarVariationsGeneralisedPairs2022} that  $(W,B_W,\M+3d \overline{h^*H}+3d \overline{\psi_W^*\cH})$ is log bounded.
In particular, there exists $m \in \mathbb{N}$ depending only on $d, \Phi, v, \epsilon, r$ such that 
\[
H_W := m(K_W + B_W + \M_W + 3d h^*H + 3d \psi_W^*\cH)
\]
is very ample and 
         $ \vol(H_W)$ is bounded from above.
          
Let $\Gamma_{\psi_{W}}\subset W\times \cS^*$ be the graph of the morphism $\psi_{W}: W\rightarrow \cS^*$. 
Since $H_W$ and $\cH$ are very ample, the product $W\times \cS^*$ can be embedded into a projective space via the Segre embedding $\bP^{N_1}\times \bP^{N_2}\subset \bP^{N}$. 
Moreover, the restriction of $\cO_{\bP^N}(1)$ to $\Gamma_{\psi_{W}}\cong W$ is given by $H_{W}+\psi_{W}^*\cH$.
Note that
\[
\begin{aligned}
\vol(H_W+\psi_W^*\cH)
&= \vol\Bigl(m(K_W+B_W+\M_W+3dh^*H) + (3dm+1)\psi_W^*\cH\Bigr) \\
&\leq \vol\Bigl((m+1)(K_W+B_W+\M_W+3dh^*H) + (3dm+1)\psi_W^*\cH\Bigr) \\
&\leq \vol\Bigl((m+1)\bigl(K_W+B_W+\M_W+3dh^*H+3d\psi_W^*\cH\bigr)\Bigr),
\end{aligned}
\]
which is bounded from above by Step~3. The first inequality follows from the fact that 
$K_W+B_W+\M_W+3dh^*H$ is big (see Step~1). Consequently, $\Gamma_{\psi_W}$ is bounded.
Since every morphism $\psi_{W}: W\rightarrow \cS^*$ is determined by its graph $\Gamma_{\psi_{W}}$, it follows that the morphism $\psi_{W}: W\rightarrow \cS^*$ is bounded.

\item  In this step,  we define a reduced divisor $D$ on $W$ and conclude the proof.

Since $h^*H$ and $\psi_W^*\cH$ are base point free, it follows that $3d(H_W+h^*H+\psi_W^*\cH)$ is very ample. 
We can find  a general reduced divisor
\[
0\leq D\in |3d(H_W+h^*H+\psi_W^*\cH)|
\]
such that $D$ contains the support of $h_*^{-1}B_Z+E+\psi_{W}^*\cH$, where $E$ is the sum of the reduced exceptional divisors of $h$. 
Moreover, by the boundedness of the length of extremal rays, the divisor $K_W+D-h^*H$ is big. 

Let $d'=\dim W$, by \cite[Lemma~3.2]{haconBirationalAutomorphismsVarieties2013}, we have
\[
\begin{aligned}
((4d'+2)H_W)^{d'-1}\cdot D
&\leq 2^{d'}\vol(K_W+D+(4d'+2)H_W) \\
&\leq 2^{d'}\vol(K_W+B_W+\M_W+D+(4d'+2)H_W) \\
&= \alpha^{d'}
\vol(K_W+B_W+\M_W+3d h^*H+3d \psi_W^*\cH),
\end{aligned}
\]
which is bounded from above by Step~3, where
$\alpha = 2+2m(4d'+3d+2)$.
Hence, $(W,D)$ is log bounded, completing the proof.

        \end{enumerate}
    \end{proof}
\end{thm}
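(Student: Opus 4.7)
The plan is to construct $W$ as the generalized log canonical model of a suitable big divisor on a log resolution $Z' \to Z$, and then invoke the boundedness of generalized polarized pairs of DCC coefficients and bounded volume. The starting point is the canonical bundle formula $K_X + B \sim_{\bR} f^*(K_Z + B_Z + M_Z)$: since $\coeff(B) \subset \Phi$ is finite, \cite[Lemma 6.7]{birkarVariationsGeneralisedPairs2022} yields a fixed $q$ with $qM$ \textbf{b}-Cartier, a finite set $\cI \supset \coeff(B_Z)$, and $(Z, B_Z + M_Z)$ generalized $\delta$-lc for a fixed $\delta>0$. I would pass to a log resolution $g: Z' \to Z$ on which $M$ descends and $\gamma\circ\phi$ extends to $\psi': Z' \to \cS^*$, and set $B_{Z'} := g_*^{-1}B_Z + (1-\delta/2)E_{Z'}$ with $E_{Z'}$ the reduced exceptional divisor, so that $(Z', B_{Z'} + M_{Z'})$ crepantly dominates $(Z, B_Z + M_Z)$ up to an effective exceptional divisor.

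The crucial comparison is that $lM_{Z'} - \psi'^*\cH$ is pseudo-effective for some $l$ depending only on $(d,\Phi,v)$; this is made plausible by part (9) of Theorem \ref{diagram of the universal Calabi--Yau fibration}, which provides $l\cM^! \geq \pi^*\cH$ on $\cS^!$. To transfer the inequality to $Z'$, I would pass to a generically finite cover $\pi_{Z'}: \bar Z \to Z'$ lifting $\psi'$ to a morphism $\bar\psi: \bar Z \to \cS^!$, over which the pullback of $f$ and of $f_{\cS^!}$ share the same generic fiber. Since $(\cX^!, \cB^!)\to \cS^!$ is locally stable over the smooth base $\cS^!$, Proposition \ref{moduli part descends on locally stable morphisms} identifies the moduli \textbf{b}-divisor of the pulled-back family with $\bar\psi^*\bm{\cM}^!_{\cS^!}$; \cite[Lemma 3.5]{birkarAntipluricanonicalSystemsFano2019} shows this agrees with the moduli \textbf{b}-divisor of $(X,B)\times_Z \bar Z$, which by Proposition \ref{amb05prop3.1} equals $\pi_{Z'}^*M_{Z'}$. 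Pushing down then yields the desired pseudo-effectivity.

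With this in hand, I would bound $\vol(K_{Z'} + B_{Z'} + M_{Z'} + 3dg^*H + 3d\psi'^*\cH)$ from above by replacing $3d\psi'^*\cH$ with $3dlM_{Z'}$ modulo pseudo-effective error, then interpolating with a bigness statement for $K_{Z'} + B_{Z'} + eM_{Z'} + 3dg^*H$ for some fixed $e \in (0,1)$ (via \cite[Theorem 8.1]{birkarEffectivityIitakaFibrations2016}) to reduce everything to $\vol(K_Z + B_Z + M_Z + 3dH) \leq (3d+1)^d r$, using the weak-polarized hypothesis. The generalized log canonical model $h: W \to Z$ then exists by \cite[Lemma 4.4]{birkarEffectivityIitakaFibrations2016}; boundedness of the length of extremal rays forces it to factor through $\cS^*$, giving $\psi_W: W \to \cS^*$, and the criterion \cite[Lemma 6.6]{birkarVariationsGeneralisedPairs2022} supplies a uniformly very ample $H_W$. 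Finally, take $D$ to be a general member of $|p(3dH_W + h^*H + \psi_W^*\cH)|$ containing the prescribed supports: bigness of $K_W + D - h^*H$ follows again from boundedness of extremal rays, and boundedness of the morphism $\psi_W$ reduces to bounding the degree of its graph under a Segre embedding.

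The hard part is the pseudo-effectivity comparison with $l$ uniform: the moduli \textbf{b}-divisor of $f$ is intrinsic to its generic fiber, whereas $\psi'$ factors through the auxiliary moduli scheme $\cS^*$, so bridging the two requires the cover $\bar Z$ together with the base-change invariance (Proposition \ref{amb05prop3.1}) and the model-independence \cite[Lemma 3.5]{birkarAntipluricanonicalSystemsFano2019} of moduli \textbf{b}-divisors. Once this is in place, the remaining steps are standard applications of the boundedness and MMP machinery for generalized pairs.
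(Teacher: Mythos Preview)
Your proposal is correct and follows essentially the same approach as the paper: the construction of $W$ as the generalized log canonical model of $K_{Z'}+B_{Z'}+\M_{Z'}+3dg^*H+3d\psi'^*\cH$, the pseudo-effectivity of $l\M_{Z'}-\psi'^*\cH$ via the cover $\bar Z\to Z'$ and the identifications from Propositions \ref{moduli part descends on locally stable morphisms} and \ref{amb05prop3.1}, the volume bound by interpolation with \cite[Theorem 8.1]{birkarEffectivityIitakaFibrations2016}, and the boundedness conclusions via \cite[Lemma 6.6]{birkarVariationsGeneralisedPairs2022} and the Segre-embedding graph argument all match the paper. The only detail you leave implicit is the use of part (11) of Theorem \ref{diagram of the universal Calabi--Yau fibration}, which guarantees $\psi'(Z')\not\subset\pi(\Supp(\cM^!))$ so that the pulled-back inequality $\bar\psi^* l\cM^! \geq \bar\psi^*\pi^*\cH$ is genuinely informative rather than vacuous.
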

 Let $d\in \bN$, $v,r,\epsilon\in \mathbb{Q}^{>0}$, and $\Phi\subset[0,1]\cap \bQ$ be a finite set. Let $\alpha$ be the positive rational number defined in Theorem \ref{diagram of the universal Calabi--Yau fibration}.
    Let $f:((X,B),A)\rightarrow (Z,H)$  be a weak $(d,\Phi,v,r,\epsilon)$-polarized log Calabi--Yau fibration.
 By Theorem \ref{moduli map is bounded}, there exists a family of pairs $(\cW, \cD) \to T$ over a finite type scheme $T$, and a projective morphism $\Theta: \cW \to \cS^*$  such that $(W, D) \cong (\cW_t, \cD_{t})$,  and $\psi_W:W\to \cS^*$ is equivalent to $\Theta_t:\cW_t\to \cS^*$ for some closed point $t \in T$. 
 
Let $\overline{\cW}$ be the normalization of the main component of $\cW \times_{\cS^*} \cS^!$, and let $\overline{\cD}_{\overline{\cW}}$ denote the preimage of $\cD$ via the map $\overline{\cW} \to \cW$. 
After replacing $(\overline{\cW}, \overline{\cD}_{\overline{\cW}})$ with its log resolution and passing to a stratification of $T$, we may assume that the pair $(\overline{\cW}, \overline{\cD}_{\overline{\cW}})$ is log smooth over $T$.
Let $\overline{\Theta}: (\overline{\cW},\overline{\cD}_{\overline{\cW}})\rightarrow \cS^!$ be the induced morphism, and let $\overline{\cF}:((\overline{\cX}_{\overline{\cW}},\overline{\cB}_{\overline{\cW}}),\overline{\cL}_{\overline{\cW}})\rightarrow \overline{\cW}$ be the pullback of $((\cX^!,\cB^!),\cL^!)\rightarrow \cS^!$ via $\overline{\Theta}$.  
We have the following commutative diagram:
   \[\xymatrix{
    & (\overline{\cX}_{\overline{\cW}},\overline{\cB}_{\overline{\cW}}),\overline{\cL}_{\overline{\cW}} \ar[d] \ar[r]^-{\overline{\cF}} & (\overline{\cW},\overline{\cD}_{\overline{\cW}}) \ar[d]^{\overline{\Theta}} \ar[r] & (\cW,\cD) \ar[d] ^\Theta\ar[r]&T\\
    & (\cX^!,\cB^!),\cL^! \ar[r] & \cS^!  \ar[r] ^\pi& \cS^*   
}\]

\begin{lem}\label{volume is bounded up to a finite cover}
  There exists $w\in \bN$ depending only on $d,\Phi,v,r,\epsilon$ such that 
    \[\vol(K_{\overline{\cX}_{\overline{\cW}_t}}+\overline{\cB}_{\overline{\cW}_t}+\alpha\overline{\cL}_{\overline{\cW}_t}+\overline{\cF}_t^*\overline{\cD}_{\overline{\cW}_t})\leq w\]
    for every closed point $t\in T$.
    \begin{proof}
Since $(\cX^!,\cB^!+\alpha\cL^!)\rightarrow \cS^!$ is locally stable, it follows that 
\[
(\overline{\cX}_{\overline{\cW}},\overline{\cB}_{\overline{\cW}}+\alpha\overline{\cL}_{\overline{\cW}})\rightarrow \overline{\cW}
\]
is also locally stable. 
Since $(\overline{\cW},\overline{\cD}_{\overline{\cW}})$ is log smooth, it follows from \cite[Corollary 4.55]{kollarFamiliesVarietiesGeneral2023} that 
\[
(\overline{\cX}_{\overline{\cW}},\overline{\cB}_{\overline{\cW}}+\alpha\overline{\cL}_{\overline{\cW}}+\overline{\cF}^*\overline{\cD}_{\overline{\cW}})
\]
is lc. After passing to a stratification of $T$, we may assume that 
\[
(\overline{\cX}_{\overline{\cW}},\overline{\cB}_{\overline{\cW}}+\alpha\overline{\cL}_{\overline{\cW}}+\overline{\cF}^*\overline{\cD}_{\overline{\cW}})\to T
\]
admits a fiberwise log resolution 
$(\overline{\cY}_{\overline{\cW}},\overline{\cR}_{\overline{\cW}})\to T$.
Then, by \cite[Theorem 1.8 (3)]{haconBirationalAutomorphismsVarieties2013},
\[
\vol(K_{\overline{\cX}_{\overline{\cW}_t}}+\overline{\cB}_{\overline{\cW}_t}+\alpha\overline{\cL}_{\overline{\cW}_t}+\overline{\cF}_t^*\overline{\cD}_{\overline{\cW}_t})
= \vol(K_{\overline{\cY}_{\overline{\cW}_t}}+\overline{\cR}_{\overline{\cW}_t,>0})
\]
is independent of $t\in T$.
    \end{proof}
\end{lem}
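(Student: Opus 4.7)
The plan is to upgrade the local stability provided by Theorem \ref{diagram of the universal Calabi--Yau fibration}(6) to a locally stable family over $T$ with log canonical total space, and then invoke the invariance of log plurigenera of Hacon--McKernan--Xu to show that the volume is locally constant along the fibers of $\bar{\cW}\to T$. Since $T$ is of finite type, only finitely many values of this volume can arise, and taking the maximum yields the desired bound $w$ depending only on $d,\Phi,v,r,\epsilon$.

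First, because local stability is preserved under arbitrary base change, the pullback $(\bar{\cX}_{\bar{\cW}},\bar{\cB}_{\bar{\cW}}+\alpha\bar{\cL}_{\bar{\cW}})\to\bar{\cW}$ of $(\cX^!,\cB^!+\alpha\cL^!)\to\cS^!$ via $\bar{\Theta}$ is itself locally stable. Combining this with the hypothesis that $(\bar{\cW},\bar{\cD}_{\bar{\cW}})$ is log smooth over $T$, one can apply the compatibility of locally stable families with log smooth boundary pullbacks (e.g.\ \cite[Corollary 4.55]{kollarFamiliesVarietiesGeneral2023}) to conclude that the enlarged pair
\[
(\bar{\cX}_{\bar{\cW}},\bar{\cB}_{\bar{\cW}}+\alpha\bar{\cL}_{\bar{\cW}}+\bar{\cF}^*\bar{\cD}_{\bar{\cW}})
\]
is log canonical. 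After a locally closed decomposition of $T$, one may further assume the existence of a simultaneous fiberwise log resolution $(\bar{\cY}_{\bar{\cW}},\bar{\cR}_{\bar{\cW}})\to T$ of this lc pair, using generic flatness of resolutions and the finite-type hypothesis on $T$.

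With these preparations the quantity in question becomes, up to birational invariance of the volume for lc pairs, the log canonical volume of a flat family of klt pairs of log general type over each stratum of $T$. The invariance of log plurigenera of Hacon--McKernan--Xu (\cite[Theorem 1.8(3)]{haconBirationalAutomorphismsVarieties2013}) then shows that this volume is independent of the fiber on each stratum, and the maximum over the finitely many strata provides the uniform bound $w$. The main technical point will be the verification that pulling back $\bar{\cD}_{\bar{\cW}}$ by the locally stable family preserves log canonicity fiber by fiber in a family-compatible way; once local stability and log smoothness of the boundary are both in place, this is precisely the content of the Kollár compatibility statement cited above, so the remainder is routine.
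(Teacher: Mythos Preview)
Your proposal is correct and follows essentially the same approach as the paper: pull back local stability along $\bar{\Theta}$, use \cite[Corollary 4.55]{kollarFamiliesVarietiesGeneral2023} together with the log smoothness of $(\bar{\cW},\bar{\cD}_{\bar{\cW}})$ to get lc, pass to a fiberwise log resolution after stratifying $T$, and then invoke \cite[Theorem 1.8(3)]{haconBirationalAutomorphismsVarieties2013}. One small slip: the pair you feed into the invariance argument is lc (the pulled-back boundary $\bar{\cF}^*\bar{\cD}_{\bar{\cW}}$ has coefficient $1$), not klt as you write, but this is exactly the setting the cited theorem handles once you pass to the log resolution $(\bar{\cY}_{\bar{\cW}},\bar{\cR}_{\bar{\cW}})$; also, the local stability input is part (7) of Theorem \ref{diagram of the universal Calabi--Yau fibration}, not (6).
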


\subsection{Log birational boundedness}
In this subsection, we do some preparation for the proof of log birational boundedness of weak $(d,\Phi,v,r,\epsilon)$-polarized log Calabi--Yau fibrations $f:((X,B),A)\to (Z,H)$.

In the following theorem, we construct a special birational model $((X', \Delta'), A')\to (Z', D')$ of $((X, B), A)\to Z$, where $(Z', D') \to Z$ factors through the log bounded birational model $(W, D)$ of $Z$ constructed in Theorem \ref{moduli map is bounded}.

\begin{thm}\label{special model of a polarized log Calabi--Yau fibration}
Let $d\in \bN$, $v\in \mathbb{Q}^{>0}$, and $\Phi\subset [0,1]\cap \bQ$ be a finite set. Let $\alpha$ be the rational number defined in Theorem \ref{diagram of the universal Calabi--Yau fibration}. Assume that
\begin{itemize}
    \item $f:(X,B)\to Z$ is a log Calabi--Yau fibration such that $(X,B)$ is klt, and $A$ is an effective integral divisor on $X$,
    \item the general fiber $((X_g,B_g),A_g)$ is a $(d,\Phi,v)$-polarized log Calabi--Yau pair,
    \item there is a canonical bundle formula $K_{X}+B\sim_{\mathbb{Q}}f^*(K_Z+B_Z+\M_Z)$,
    \item $W\to Z$ is a birational morphism, and
    \item $D$ is a reduced divisor on $W$ containing the strict transform of $\Supp(B_Z)$ together with the exceptional divisors over $Z$.
\end{itemize}

Then we can construct a commutative diagram
\[
\xymatrix{
((\overline{X},\overline{B}),\overline{A}) \ar[r] \ar[d]^{\overline{f}} 
& ((X',B'),A') \ar@{-->}[rr] \ar[d]^{f'} 
& & ((X,B),A )\ar[d]^f \\
(\overline{Z},\overline{D}) \ar[r]^{\pi'} 
& (Z',D') \ar[r] 
& (W,D) \ar[r] 
& Z
}
\]
satisfying the following properties:
\begin{enumerate}[label=(\roman*)] 
    \item $Z'\to W$ is a birational morphism,
    \item $f':X'\to Z'$ is a contraction, and $B', A'$ are horizontal $\bQ$-divisors on $X'$,
    \item the generic fiber of $(X',B'+\alpha A')\to Z'$ is isomorphic to the generic fiber of $(X,B+\alpha A)\to Z$,
    \item $\pi':\overline{Z}\to Z'$ is a finite cover,
    \item $(Z',D')$ and $(\overline{Z},\overline{D})$ are log smooth, where $D'$ is the sum of the strict transform of $D$ and all exceptional divisors over $W$, and $\overline{D}$ is the preimage of $D'$ under $\pi'$,
    \item $\overline{X}$ is the normalization of $X'\times_{Z'}\overline{Z}$, and $\overline{B},\overline{A}$ are horizontal $\bQ$-divisors equal to the pullback of $B',A'$ on $\overline{X}$ over the generic point of $Z'$, and
    \item $\overline{f}:((\overline{X},\overline{B}),\overline{A})\to \overline{Z}$ is a family of $(d,\Phi,v)$-polarized log Calabi--Yau pairs.
\end{enumerate}

Furthermore, if $\overline{f}:(\overline{X},\overline{B})\to \overline{Z}$ has klt fibers over codimension one points of $\overline{Z}\setminus \overline{D}$, then, setting $\Delta':=B'+\red(f'^*D')$, we have
\begin{enumerate}
    \item $f'$ has integral fibers over codimension one points of $Z'\setminus D'$,
    \item $(X',\Delta'+\alpha A')$ is lc,
    \item $K_{X'}+\Delta'\sim_{\bQ} f'^*(K_{Z'}+D'+\M_{Z'})$, and
    \item $\Supp(\Delta')$ contains the strict transform of $\Supp(B)$ together with all exceptional divisors over $X$.
\end{enumerate}
\end{thm}

    \begin{proof}
    \begin{enumerate} [label=\textsl{Step} \arabic{enumi}., wide=13pt, itemsep=13pt]
    \item In this step we construct a birational morphism $Z'\to W$ and a finite cover $\overline{Z}\to Z'$.
    
        Let $Y$ be a log resolution of  $(X,B+\alpha A)$. Let $B_Y$ be the strict transform of $B$ plus the reduced horizontal exceptional divisors over $Z$, and let $A_Y$ be the strict transform of $A$. Let $Z^o\subset Z$ be an open subset such that \begin{itemize}
            \item $W\to Z$ is an isomorphism over $Z^o$,
            \item $f:((X,B),A)\rightarrow Z$ is a family of $(d,\Phi,v)$-polarized log Calabi--Yau pairs over $Z^o$, and
            \item $f_Y:(Y,B_Y+A_Y)\rightarrow Z$ is log smooth over $Z^o$.
        \end{itemize}
         Then $B_Y$ and $A_Y$ are effective $\bQ$-divisors which are horizontal over $Z^o$.  By \cite[Theorem 2.1 and Proposition 4.4]{abramovichWeakSemistableReduction2000}, there is an extension $Z^o \hookrightarrow Z'$ such that\begin{itemize}
             \item $Z'$ is a log resolution of $(W, D)$, 
             \item there is an equidimensional toroidal morphism $f'_Y: Y' \to Z'$,
             \item if $B'_Y, A'_Y$ are the closures of $B_Y|_{Z^o}$ and $A_Y|_{Z^o}$, respectively, then they are contained in the toroidal boundary of $Y'$, and 
             \item $((Y', B'_Y), A'_Y )\to Z'$ is an extension of $((Y, B_Y), A_Y )\times_Z Z^o \to Z^o$.
         \end{itemize} 
         
Let $D'$ be the sum of the strict transform of $D$ on $Z'$ and all reduced exceptional divisors over $W$.
By \cite[Proposition~5.1]{abramovichWeakSemistableReduction2000}, there exists a finite cover
$\pi'\colon \overline{Z}\to Z'$ such that
$\overline{f}_Y\colon \overline{Y}\to \overline{Z}$ is an equidimensional toroidal morphism
with reduced fibers, where $\overline{Y}$ is the normalization of
$Y'\times_{Z'}\overline{Z}$.

Note that the finite cover $\overline{Z}\to Z'$ is a Kawamata covering.
To ensure the smoothness of $\overline{Z}$ in the construction, we add extra branch loci
artificially (see \cite[Theorem~1.8.2]{Kawamatabook}).
Let $R'$ be the divisor on $Z'$ whose support contains the union of
$\Supp(D')$ and the branch divisors of $\pi'$.
Define $\overline{R}:=\red(\pi'^*R')$.
Then $(\overline{Z},\overline{R})$ is log smooth by
\cite[Lemma~5.9]{abramovichWeakSemistableReduction2000}.

Let $\overline{D}:=\red(\pi'^*D')$ be the reduced divisor on $\overline{Z}$.
By construction, $\overline{D}\subset \overline{R}$.
Let $\overline{B}_Y$ and $\overline{A}_Y$ be the pullbacks of $B'_Y$ and $A'_Y$ to $\overline{Y}$.
Then they are contained in the toroidal boundary of $\overline{Y}$.

By \cite[Proposition~2.16]{ambroPositivityModuliPart2022},
$(\overline{Y},\overline{B}_Y+\mu\overline{A}_Y+\overline{f}_Y^*\Sigma)$ is lc
for any reduced simple normal crossing divisor $\Sigma$ on $\overline{Z}$,
where $\mu\in(0,1)$ is sufficiently small.
It follows that
\[
\overline{f}_Y\colon (\overline{Y},\overline{B}_Y+\mu\overline{A}_Y)\to \overline{Z}
\]
is a locally stable morphism by
\cite[Corollary~4.55]{kollarFamiliesVarietiesGeneral2023}.

\item In this step we construct a family of $(d,\Phi,v)$-polarized log Calabi--Yau pairs $((\overline{X},\overline{B}),\overline{A})\to\overline{Z}$.

Since $(\overline{Y},\overline{B}_Y+\mu\overline{A}_Y)\to\overline{Z}$ is locally stable and $\overline{Z}$ is smooth, every lc center of $(\overline{Y},\overline{B}_Y+\mu\overline{A}_Y)$ dominates $\overline{Z}$ by \cite[Corollary 4.56]{kollarFamiliesVarietiesGeneral2023}. As a general fiber $(Y'_g,B'_g+\mu A'_g)$ is klt, we conclude that $(\overline{Y},\overline{B}_Y+\mu\overline{A}_Y)$ is klt. The general fiber $(\overline{Y}_g,\overline{B}_{Y_g})$ has a semi-ample model $(X_g,B_g)$, and hence it admits a good minimal model by \cite[Lemma 2.9.1]{haconBoundednessModuliVarieties2018}. Thus, by \cite[Theorem 1.1]{haconExistenceLogCanonical2013}, running an MMP on $K_{\overline{Y}}+\overline{B}_Y$ over $\overline{Z}$ yields a good minimal model $(\overline{X}',\overline{B}')$ over $\overline{Z}$. Let $\overline{A}'$ be the pushforward of $\overline{A}_Y$.

By \cite[Corollary 4.57.1]{kollarFamiliesVarietiesGeneral2023}, $(\overline{X}',\overline{B}')\to\overline{Z}$ is also locally stable. Since $K_{\overline{X}'}+\overline{B}'$ is semi-ample over $\overline{Z}$ and has Kodaira dimension $0$ on the generic fiber, and since $\overline{X}'\to\overline{Z}$ is equidimensional, upper semi-continuity of fiber dimensions gives
\[K_{\overline{X}'}+\overline{B}'\sim_{\bQ,\overline{Z}}0.\]
Define $(\overline{X}',\overline{B}'+\mu\overline{A}')\dashrightarrow(\overline{X},\overline{B}+\mu\overline{A})$ to be the log canonical model of $K_{\overline{X}'}+\overline{B}'+\mu\overline{A}'$ over $\overline{Z}$. As $\overline{X}'\dashrightarrow\overline{X}$ does not extract any divisor, \begin{equation}\label{eq:key}
K_{\overline{X}}+\overline{B}\sim_{\bQ,\overline{Z}}0.
\end{equation}
Finally, by \cite[Corollary 4.57.2]{kollarFamiliesVarietiesGeneral2023}, $((\overline{X},\overline{B}),\overline{A})\to\overline{Z}$ is a stable family of polarized log Calabi--Yau pairs. Since the general fiber is $(d,\Phi,v)$-polarized, by the definition of $\alpha$ the family $(\overline{X},\overline{B}+\alpha\overline{A})\to\overline{Z}$ is locally stable.

\item In this step we construct a contraction $f':X'\to Z'$ together with horizontal $\bQ$-divisors $B',A'$ on $X'$, and show that the generic fiber of $(X',B'+\alpha A')\to Z'$ is isomorphic to that of $(X,B+\alpha A)\to Z$.

By the Hurwitz formula \cite[\S2.41.4]{kollarSingularitiesMinimalModel2013} we have
\[K_{\overline{Z}}+\overline{R}=\pi^*(K_{Z'}+R'),\]
where both $(\overline{Z},\overline{R})$ and $(Z',R')$ are log smooth by construction. By \cite[Corollary 4.55]{kollarFamiliesVarietiesGeneral2023}, $(\overline{Y},\overline{B}_Y+\mu\overline{A}_Y+\overline{f}_Y^*\overline{R})$ is lc. Let $\pi_Y:\overline{Y}\to Y'$ be the natural finite cover. Since étale morphisms are stable under base change, the ramification divisor of $\pi_Y$ is contained in $\Supp(\overline{f}_Y^*\overline{R})$. Thus, by \cite[\S2.41.4]{kollarSingularitiesMinimalModel2013},
\[K_{\overline{Y}}+\overline{B}_Y+\mu\overline{A}_Y+\overline{f}_Y^*\overline{R}
=\pi_Y^*\big(K_{Y'}+B'_Y+\mu A'_Y+\red(f'^*_YR')\big),\]
and $(Y',B'_Y+\mu A'_Y+\red(f'^*_YR'))$ is lc. 

Since the general fiber $(Y'_g,B'_{Y_g})$ admits a semi-ample model $(X_g,B_g)$,
by \cite[Theorem~1.1]{haconExistenceLogCanonical2013} we can run an MMP on
\[
K_{Y'}+B'_Y+\red(f'^*_Y R')
\]
over $Z'$ (equivalently, on
$K_{Y'}+B'_Y+\red(f'^*_Y R')-a f'^*_Y R'$ for $a\ll 1$).
This yields a good minimal model
\[
(X'',B''+\red(f''^*R'))
\]
over $Z'$, with morphism $f''\colon X''\to Z'$.
Let $A''$ be the pushforward of $A'_Y$.

By Lemma~\ref{minimal models and canonical models commutes with base change}(1),
$\overline{X}'$ is isomorphic in codimension one to the normalization of
$X''\times_{Z'}\overline{Z}$.
Now let
\[
(X',B'+\red(f'^*R')+\mu A')
\]
be the log canonical model of
$K_{X''}+B''+\red(f''^*R')+\mu A''$ over $Z'$, where $f'\colon X'\to Z'$.
Then the generic fiber of
\[
(X',B'+\red(f'^*R')+\alpha A')\to Z'
\]
coincides with that of
\[
(X,B+\alpha A)\to Z.
\]

As both $B'_Y$ and $A'_Y$ are horizontal over $Z'$, so are $B'$ and $A'$. By Lemma \ref{minimal models and canonical models commutes with base change}(2), $\overline{X}$ is isomorphic to the normalization of $X'\times_{Z'}\overline{Z}$ and 
\[K_{\overline{X}}+\overline{B}+\alpha\overline{A}+\overline{f}^*\overline{R}
=\pi_X^*\big(K_{X'}+B'+\alpha A'+\red(f'^*R')\big),\]
where $\pi_X:\overline{X}\to X'$. Using $\overline{A}=\pi_X^*A'$, this simplifies to
\[K_{\overline{X}}+\overline{B}+\overline{f}^*\overline{R}
=\pi_X^*(K_{X'}+B'+\red(f'^*R')).\]
Combining (\ref{eq:key}) with Lemma \ref{linear trivial preserved under finite base change}, we conclude that
\[K_{X'}+B'+\red(f'^*R')\sim_{\bQ, Z'}0.\]
Finally, since $(\overline{X},\overline{B}+\alpha\overline{A}+\overline{f}^*\overline{R})$ is lc, \cite[Corollary 2.43]{kollarSingularitiesMinimalModel2013} implies that $(X',B'+\red(f'^*R')+\alpha A')$ is also lc.

\item In this step we prove the furthermore part. From now on we assume that $(\overline{X},\overline{B})\rightarrow \overline{Z}$ has klt fibers over codimension one points in $\overline{Z}\setminus \overline{D}$, and denote $\Delta':=B'+\red(f'^*D')$.

Let $P$ be a prime divisor on $Z'$ not contained in $\Supp(D')$, and let $\widetilde{B_Z}$ be the strict transform of $B_Z$ on $W$. Since $\Supp(\widetilde{B_Z})\subseteq \Supp(D)$ and $\Supp(D')$ contains both the strict transform of $D$ and all exceptional divisors over $Z$, by the definition of the discriminant part in the canonical bundle formula, we conclude that $f':X' \to Z'$ has a reduced fiber over the generic point of $P$.

Let $\overline{P}$ be an irreducible component of the preimage of $P$ on $\overline{Z}$. By assumption, $(\overline{X},\overline{B})$ has a klt fiber over the generic point of $\overline{P}$. By inversion of adjunction, $(\overline{X},\overline{B}+\overline{f}^*\overline{P})$ is plt near the fiber over the generic point of $\overline{P}$. By \cite[\S2.41.4]{kollarSingularitiesMinimalModel2013}, over the generic point of $\overline{P}$ the divisor $K_{\overline{X}}+\overline{B}+\overline{f}^*\overline{P}$ is equivalent to the pullback of $K_{X'}+B'+f'^*P$. Hence, near the fiber of the generic point of $P$, the pair $(X',B'+f'^*P)$ is plt by \cite[Corollary 2.43]{kollarSingularitiesMinimalModel2013}. Therefore, $f'^*P$ is irreducible over the generic point of $P$. This proves (1).

Because $f'$ is equidimensional and has reduced fibers over codimension one points of $Z'\setminus D'$, we obtain
\[
\red(f'^*R')= \red(f'^*D)+f'^*(R'-D').
\]
Since 
\[
K_{X'}+B'+\red(f'^*R')\sim_{\mathbb{Q},Z'}0
\]
and $(X',B'+\red(f'^*R')+\alpha A')$ is lc, it follows that
\[
K_{X'}+\Delta'=K_{X'}+B'+\red(f'^*D)\sim_{\mathbb{Q},Z'}0
\]
and $(X',\Delta'+\alpha A')$ is also lc. This proves (2).

Next, observe that if $P$ is a prime divisor on $Z'$ not contained in $\Supp(D')$, then $(X',\Delta'+f'^*P)$ is plt over the generic point of $P$, which implies that the discriminant divisor of $f':(X',\Delta')\to Z'$ is contained in $\Supp(D')$. If $P$ is a prime divisor contained in $\Supp(D')$, then $\lct(X',\Delta';P)=0$. Thus,
\[
K_{X'}+\Delta'\sim_{\mathbb{Q}}f'^*(K_{Z'}+D'+\M_{Z'}),
\]
where $\M$ is the moduli $\mathbf{b}$-divisor corresponding to $f:(X,B)\to Z$. This proves (3).

Finally, we prove (4). First, we show that $\Supp(f'^*D')$ contains all exceptional divisors over $X$. Suppose $E'$ is a prime divisor on $X'$ exceptional over $X$ but not contained in $\Supp(f'^*D')$. Since $(X,B)\to Z$ and $(X',B')\to Z'$ have the same generic fiber, $E'$ is vertical over $Z'$. As $f':X'\to Z'$ is equidimensional, $P':=f'(E')$ is a prime divisor on $Z'$ not contained in $\Supp(D')$. Because $\Supp(D')$ contains all exceptional divisors over $Z$, the image of $P'$ on $Z$ is also a prime divisor $P$. Let $F$ be a component of $f^{-1}P$ dominating $P$. Then $F$ is a non-klt center of $(X',B'+f'^*P')$ over the generic point of $P'$, distinct from $E'$, since $E'$ is exceptional over $X$. This contradicts the fact that $(X',B'+f'^*P')$ is plt near the fiber over the generic point of $P'$.

Let $Q'$ be an irreducible component of the strict transform of $\Supp(B^\vv)$ in $X'$, and let $Q$ be the image of $Q'$ in $X$. Then $Q\subseteq \Supp(B^\vv)$. Since $f':X'\to Z'$ is equidimensional, $f'(Q')$ is a prime divisor on $Z'$. By \cite[Lemma 2.6.(b)]{jiaoBoundednessPolarizedCalabiYau2022}, every $f$-vertical log center of $(X,B)$ dominates a generalized log center of $(Z,B_Z,\M)$. It follows that $f(Q)$ is a generalized log center of $(Z,B_Z,\M)$, and hence $f'(Q')$ is a generalized log place of $(Z,B_Z,\M)$. By construction, the divisor $D'$ contains the strict transform of $\Supp(B_Z)$ together with all the exceptional divisors over $Z$. Therefore, $f'(Q')\subseteq \Supp(D')$, and consequently $Q'\subseteq \Supp(f'^*D')$. Hence, $\Supp(f'^*D')$ contains the strict transform of $\Supp(B^\vv)$. On the other hand, since the fibrations $(X,B)\to Z$ and $(X',B')\to Z'$ have the same generic fiber, $\Supp(B')$ contains the strict transform of $\Supp(B^\h)$. Combining the above, we conclude that $\Supp(\Delta')$ contains the strict transform of $\Supp(B)$ and all exceptional divisors over $X$. This proves (4). 
        \end{enumerate}
    \end{proof}

In the following theorem, we aim to bound the log canonical volume of the special birational model
constructed in Theorem \ref{special model of a polarized log Calabi--Yau fibration}.

\begin{thm}\label{volume is bounded}
    Let $d\in \bN$,  $v,r,\epsilon\in \mathbb{Q}^{>0}$, and $\Phi\subset[0,1]\cap \bQ$ be a finite set. Then there exists a rational number $\alpha\in (0,1)$ and positive numbers $m,w$ depending only on $d,\Phi,v,r,\epsilon$ satisfying the following:
    
    If $f:((X, B),A)\rightarrow (Z, H)$ is a weak $(d,\Phi,v,r,\epsilon)$-polarized log Calabi--Yau fibration, then there exists a polarized log Calabi--Yau fibration $f':((X',\Delta'),L')\rightarrow Z'$ such that
    \begin{enumerate}
    \item $X'\dasharrow X$ is a birational map, and $Z'\to Z$ is a birational morphism,
    \item the generic fiber of $f:(X,B)\rightarrow Z$ is isomorphic to the generic fiber of $f':(X',\Delta')\rightarrow Z'$,
    \item $L'_g:=L'|_{X'_g}$ is numerically equivalent to $mA'_g:=mA'|_{X'_g}$ on $X'_g$, where $A'$ is the strict transform of $A$ on $X'$, and
    \item The coefficients of $\Delta'$ are in $\Phi\cup \{1\}$.
\end{enumerate}
Moreover, we have
\begin{enumerate}  \setcounter{enumi}{4} 
    \item $\Delta'$ contains the strict transform of $\Supp(B)$ on $X'$ and all exceptional divisors over $X$,
    \item $(X',\Delta'+\alpha L')$ is lc,
    \item $K_{X'}+\Delta'+\alpha L'- h'^*H$ is big, where $h':X'\to Z$, and 
    \item $\vol(K_{X'}+\Delta'+\alpha L')\leq w$.
\end{enumerate}

    \begin{proof}
    \begin{enumerate} [label=\textsl{Step} \arabic{enumi}., wide=13pt, itemsep=13pt]
\item In this step we construct a polarized log Calabi--Yau fibration 
\[
f':((X',\Delta'),L')\to Z'
\]
by Theorem \ref{special model of a polarized log Calabi--Yau fibration}.

By Theorem \ref{moduli map is bounded}, there exists a birational morphism $h:W\to Z$ and a reduced divisor $D$ on $W$ such that 
\begin{itemize}
    \item $(W,D)$ is log bounded,
    \item the induced rational map $\psi_W:W\dashrightarrow \cS^*$ is a bounded morphism,
    \item $D\supseteq \Supp(h_*^{-1}B_Z+E+\psi_W^*\cH)$, where $E$ is the sum of the reduced exceptional divisors of $h$, and $\cH$ is a very ample divisor on $\cS^*$,
    \item $K_W+D-h^*H$ is big.
\end{itemize}

Let $\overline{W}$ be the normalization of the main component of 
$W \times_{\cS^*} \cS^!$, and let $D_{\overline{W}}$ denote the preimage of $D$ via
$\overline{W} \to W$.
After replacing $(\overline{W},D_{\overline{W}})$ with its log resolution,
we may assume that $(\overline{W},D_{\overline{W}})$ is log smooth.
Then $\overline{W} \to W$ is generically finite.
Let \[f_{\overline{W}}:((X_{\overline{W}},B_{\overline{W}}),L_{\overline{W}})\to \overline{W}\]
be the pullback of $((\cX^!,\cB^!),\cL^!)\to \cS^!$ via $\overline{W}\to \cS^!$.

Let $L$ on $X$ be the closure of the pullback of $\cL$ via the moduli map
$U \to \cS$ for some open subset $U\subset Z$.
By Theorem~\ref{diagram of the universal Calabi--Yau fibration},
the general fiber $((X_g,B_g),L_g)$ is a
$(\dim X_g,\Phi,v')$-polarized log Calabi--Yau pair,
where $v'$ depends only on $d,\Phi,v$.

Applying Theorem~\ref{special model of a polarized log Calabi--Yau fibration},
we obtain a family of
$(\dim X_g,\Phi,v')$-polarized log Calabi--Yau pairs
\[
\overline{f}\colon ((\overline{X},\overline{B}),\overline{L})\to \overline{Z},
\]
and a polarized log Calabi--Yau fibration
\[
f'\colon ((X',\Delta'),L')\to Z'
\]
satisfying (1)--(4).
We may assume that $Z'$ is the log resolution of $(W,D)$
extracting all exceptional divisors of $\overline{W}\to W$.

         \item In this step we prove (5)--(7).
         
     By Theorem \ref{special model of a polarized log Calabi--Yau fibration} (2)(4), 
to show that $\Delta'$ contains the strict transform of $\Supp(B)$ on $X'$ and all exceptional divisors over $X$, and that $(X',\Delta'+\alpha L')$ is lc, 
it suffices to prove that $(\overline{X},\overline{B})\to \overline{Z}$ has klt fibers in $\overline{Z}\setminus \overline{D}$.

         Let $\widetilde{Z}\to \overline{Z}$ be a generically finite morphism such that \begin{itemize}
             \item $\widetilde{Z}\to Z\dashrightarrow \cS$ is a morphism and factors through $\overline{\cS}\to \cS$, and
             \item $\widetilde{Z}\to Z'\to W$ factors through $\overline{W}\to W$.
         \end{itemize}
         We have the following commutative diagram:
         \[
\xymatrix{
     & & (Z',D') \ar[d]^g & (\overline{Z},\overline{D}) \ar[l] & (\widetilde{Z} ,\widetilde{D})\ar[l]\ar[ld] \ar[dd] \\
  U\ar[rd]\ar@{^{(}->}[r]&  Z & (W,D) \ar[l]^h \ar[d] & (\overline{W},D_{\overline{W}}) \ar[l] \ar[d] \\
   & \cS \ar[r] & ({\cS^*},{\cH}) & {\cS^!} \ar[l] & \overline{\cS} \ar[l] \ar@/^1pc/[lll]
}
\]
 Let $((\widetilde{X},\widetilde{B}),\widetilde{L})$ be the normalization of the main component of the base change of $((\overline{X},\overline{B}),\overline{L})$ by $\widetilde{Z}\to \overline{Z}$. Let $((\widetilde{X}_W,\widetilde{B}_W),\widetilde{L}_W)$ be the normalization of the main component of the base change of $((X_{\overline{W}},B_{\overline{W}}),L_{\overline{W}})$ by $\widetilde{Z}\to \overline{W}$.

By Theorem \ref{diagram of the universal Calabi--Yau fibration} (4) and (6), the generic fiber of $((\widetilde{X},\widetilde{B}),\widetilde{L})\to \widetilde{Z}$ is isomorphic to the generic fiber of $((\widetilde{X}_W,\widetilde{B}_W),\widetilde{L}_W)\to \widetilde{Z}$. Moreover, since both $((\widetilde{X},\widetilde{B}),\widetilde{L})\to \widetilde{Z}$ and $((\widetilde{X}_W,\widetilde{B}_W),\widetilde{L}_W)\to \widetilde{Z}$ are families of polarized log Calabi--Yau pairs, by the separatedness of the moduli of polarized log Calabi--Yau pairs, we obtain
\[
((\widetilde{X},\widetilde{B}),\widetilde{L})\cong ((\widetilde{X}_W,\widetilde{B}_W),\widetilde{L}_W).
\]

By Theorem \ref{diagram of the universal Calabi--Yau fibration} (8), Theorem \ref{moduli map is bounded} (2), and the fact that $D_{\overline{W}}$ is the preimage of $D$, we conclude that $(X_{\overline{W}},B_{\overline{W}})\to \overline{W}$ has klt fibers over $\overline{W}\setminus D_{\overline{W}}$. Therefore, $(\widetilde{X},\widetilde{B})\to \widetilde{Z}$ has klt fibers over $\widetilde{Z}\setminus \widetilde{D}'$, where $\widetilde{D}'$ is the preimage of $D_{\overline{W}}$. Since $\overline{D}$ contains the preimage of $D$ on $\overline{Z}$, it follows that $\Supp(\widetilde{D}')\subseteq \Supp(\widetilde{D})$, where $\widetilde{D}$ is the preimage of $\overline{D}$. Hence $(\overline{X},\overline{B})$ has klt fibers over $\overline{Z}\setminus \overline{D}$.

\medskip
We now show that \[K_{X'}+\Delta'+\alpha L'-h'^*H\] is big. By Theorem \ref{moduli map is bounded} (3), $K_W+D-h^*H$ is big. Since $D'$ contains the strict transform of $D$ plus the reduced exceptional divisors over $W$, it follows that $K_{Z'}+D'-(K_W+D)$ is effective. Hence $K_{Z'}+D'-g^*h^*H$ is big. Let $0<a\ll1$. By Theorem \ref{special model of a polarized log Calabi--Yau fibration} (3), we have
\[
K_{X'}+\Delta'+\alpha L'-h'^*H
= f'^*(K_{Z'}+D'+\M_{Z'}-g^*h^*H) + aL' + (\alpha-a)L'.
\]
Since $L'$ is big over $Z'$ and $L'\geq 0$, it follows that $K_{X'}+\Delta'+\alpha L'-h'^*H$ is the sum of a big $\bQ$-divisor and an effective $\bQ$-divisor, and hence big.

        \item In this step we prove that $\vol(K_{X'}+\Delta'+\alpha L')$ is bounded from above.

        Consider the following commutative diagram:
        \[\xymatrix{
        (X',B'+\alpha L')\ar[d]_{f'} & (\widetilde{X},\widetilde{B}+\alpha\widetilde{L})\ar[d]_{\widetilde{f}} \ar[l]_-{\mu} \ar[r]^-{\eta} & \widetilde{X}' \ar[r]^-{\nu} & (X_{\overline{W}},B_{\overline{W}}+\alpha L_{\overline {W}})\ar[d]^{f_{\overline{W}}} \\
        (Z',D') & (\widetilde{Z},\widetilde{D})\ar[l]_-\pi \ar[rr]^-\tau&  & ({\overline {W}},D_{\overline{W}})
        }
        \] 
        Here $\widetilde{X}\xrightarrow{\eta} \widetilde{X}'\xrightarrow{\nu} X_{\overline{W}}$ is the Stein factorization of $\widetilde{X}\rightarrow X_{\overline{W}}$, hence $\nu$ is a finite morphism and $\eta$ is a birational morphism. Now we claim that \[\eta_*\mu^*(K_{X'}+\Delta'+\alpha L')=\nu^*(K_{X_{\overline{W}}}+B_{\overline{W}}+\alpha L_{\overline{W}}+f_{\overline{W}}^*D_{\overline{W}}).\]

Since the generic fiber of 
\[
(X',\Delta'+\alpha L')\times_{Z'}\widetilde{Z}\rightarrow \widetilde{Z}
\] 
is equal to the generic fiber of 
\[
(X_{\overline {W}},B_{\overline {W}}+\alpha L_{\overline{W}}+f_{\overline{W}}^*D_W)\times_{\overline{W}} \widetilde{Z}\rightarrow \widetilde{Z},
\] 
it suffices to compare vertical divisors. Let $\widetilde{P}$ be a prime divisor on $\widetilde{X}$ which is vertical over $\widetilde{Z}$, and assume that its image $\widetilde{P}' := \eta(\widetilde{P})$ is also a prime divisor on $\widetilde{X}'$. Let $P_{\overline{W}}$ denote the image of $\widetilde{P}$ on $X_{\overline{W}}$. We claim that the image of $\widetilde{P}$ on $X'$ is a prime divisor as well.

Indeed, since $P_{\overline{W}}$ is a prime divisor and $f_{\overline{W}}$ is equidimensional, its image $f_{\overline{W}}(P_{\overline{W}})$ is a prime divisor on $\overline{W}$. By Step~1, the morphism $Z'\to W$ is a log resolution of $(W,D)$ extracting all exceptional divisors of $\overline{W}\to W$. It follows that 
$\pi \circ \widetilde{f}(\widetilde{P})$
is a prime divisor on $Z'$. Since both $f'$ and $\widetilde{f}$ are equidimensional with fibers of the same dimension, the image $\mu(\widetilde{P})$ is a prime divisor on $X'$, which we denote by $P'$.

We now distinguish two cases:

\textbf{Case (1):} $\coeff_{P'}\Delta'=1$. In this case, $f'(P')$ is a prime divisor contained in $\Supp(D')$. By construction, $\pi^{-1}(D')$ is the union of $\tau^{-1}(D_{\overline{W}})$ and some $\tau$-exceptional divisors. Therefore, $f_{\overline{W}}(P_{\overline{W}})$ is a prime divisor contained in $\Supp(D_{\overline{W}})$. Hence, by \cite[\S2.41.4]{kollarSingularitiesMinimalModel2013}, over the generic point of $\widetilde{P}'$, we have
\[
\nu^*(K_{X_{\overline{W}}}+B_{\overline{W}}+\alpha L_{\overline{W}}+f_{\overline{W}}^*D_{\overline{W}})=K_{\widetilde{X}'}+\widetilde{P}'=\eta_*\mu^*(K_{X'}+\Delta'+\alpha L').
\]

\textbf{Case (2):} $\coeff_{P'}\Delta'=0$. In this case, $f'(P')$ is not contained in $\Supp(D')$ and $f_{\overline{W}}(P_{\overline{W}})$ is not contained in $\Supp(D_{\overline{W}})$. Hence, by Theorem \ref{special model of a polarized log Calabi--Yau fibration} (1), $f'$ has reduced fibers over the generic point of $f'(P')$. Since the ramification locus of ${\overline{W}}\rightarrow W$ is contained in $\Supp(D_{\overline{W}})$ by Theorem \ref{diagram of the universal Calabi--Yau fibration} (8), the map ${\overline{W}}\dashrightarrow Z'$ is étale over the generic point of $f'(P')$. Therefore, the ramification index of $\mu$ along $\widetilde{P}$ equals that of $\nu$ along $\widetilde{P}$. By the Hurwitz formula, we have
\[
\nu^*(K_{X_{\overline{W}}}+B_{\overline{W}}+\alpha L_{\overline{W}}+f_{\overline{W}}^*D_{\overline{W}})= \eta_*\mu^*(K_{X'}+\Delta'+\alpha L')
\]
over the generic point of $\widetilde{P}$. This proves the claim.

By the claim, we obtain
\[
\vol(\mu^*(K_{X'}+\Delta'+\alpha L'))\leq \vol(\nu^*(K_{X_{\overline{W}}}+B_{\overline{W}}+\alpha L_{\overline{W}}+f_{\overline{W}}^*D_{\overline{W}})).
\] 
By \cite[Lemma 4.3]{holschbachChebotarevtypedensitytheorem}, it follows that
\begin{align*}
\vol(\mu^*(K_{X'}+\Delta'+\alpha L')) &= \deg(\mu)\vol(K_{X'}+\Delta'+\alpha L'),\\
\vol(\nu^*(K_{X_{\overline{W}}}+B_{\overline{W}}+\alpha L_{\overline{W}}+f_{\overline{W}}^*D_{\overline{W}})) &= \deg(\nu)\vol(K_{X_{\overline{W}}}+B_{\overline{W}}+\alpha L_{\overline{W}}+f_{\overline{W}}^*D_{\overline{W}}).
\end{align*}
Since $\deg(\nu)\cdot \deg({\overline{W}}/Z) =\deg(\mu)$, we conclude
\[
\vol(K_{X'}+\Delta'+\alpha L')\leq \frac{1}{\deg({\overline{W}}/Z)} \vol(K_{X_{\overline{W}}}+B_{\overline{W}}+\alpha L_{\overline{W}}+f_{\overline{W}}^*D_{\overline{W}})\leq w
\]
by Lemma \ref{volume is bounded up to a finite cover}, where $w$ is a positive integer depending only on $d,\Phi,v,r,\epsilon$.
    \end{enumerate}
    \end{proof}
\end{thm}

\subsection{Log boundedness in  codimension one}
We now proceed to establish the main theorem of this section.
\begin{proof}[Proof of Theorem \ref{thm:bdd of weak pcy fibration}]
 \begin{enumerate} [label=\textsl{Step} \arabic{enumi}., wide=13pt, itemsep=13pt]

\item Let \[h':((X',\Delta'),L')\to Z'\to Z\] be the fibration constructed in Theorem \ref{volume is bounded}. By \cite[Theorem 1.3]{haconACCLogCanonical2014}, there exists a fixed positive integer $n$ such that the linear system 
\[
|n(K_{X'}+\Delta'+\alpha L')|
\] 
defines a birational map. Let $\pi:Y'\to X'$ be a log resolution of $(X',\Delta'+L')$ such that  
\[
|n\pi^*(K_{X'}+\Delta'+\alpha L')|=|M|+F,
\]  
where $|M|$ is the free part and $F$ is the fixed part. Set \[G:=M+\pi^*h'^*H.\] Then $|G|$ is base point free and defines a birational morphism $\mu:Y'\to Y$ such that $\mu_*G$ is very ample on $Y$. Moreover, every curve contracted by $\mu$ intersects $\pi^*h'^*H$ trivially, hence the induced map $g:Y\dashrightarrow Z$ is in fact a morphism.  

By construction, we have  
\[
G+F\sim_{\bQ}n\pi^*(K_{X'}+\Delta'+\alpha L')+\pi^*h'^*H.
\]  
Let $\eta_Z$ denote the generic point of $Z$. Since $K_{X'}+\Delta'\sim_{\bQ,\eta_Z}0$, we obtain  
\[
G+F\sim_{\bQ,\eta_Z}n\alpha\pi^*L'.
\]  

\item Define 
\[
\Sigma':=\red(\pi_*^{-1}\Delta')+G+F+\pi^*h'^*H_Z+E',
\] 
where $E'$ denotes the reduced exceptional divisor of $\pi:Y'\to X'$, and set $\Sigma=\mu_*\Sigma'$. In this step, we prove that $(Y,\Sigma)$ belongs to a log bounded family and that the morphism $g:Y\to Z$ is bounded.  

Since $K_{X'}+\Delta'+\alpha L'-h'^*H$ is big by Theorem \ref{volume is bounded} (7), it follows that  
\[
\vol(G)\leq \vol\big((n+1)(K_{X'}+\Delta'+\alpha L')\big)\leq (n+1)^d w.
\]  
By \cite[Lemma 7.3]{haconACCLogCanonical2014}, there exists a fixed positive number $\beta<1$ such that $K_{X'}+\beta(\Delta'+\alpha L')$ is big.  

Define  
\[
c:=\frac{1}{\min\{c_i\in \Phi\cup\{1\}\mid c_i\neq 0\}},
\]  
and choose a fixed positive number $t$ satisfying  
\[
\frac{c+t\beta}{1+t}\leq 1,\quad \text{equivalently,}\quad 
t\geq \frac{c-1}{1-\beta}.
\]  
 Then we conclude that 
\begin{align*}
    &\vol(K_{Y'}+\Sigma'+(4d+2)G)\\
    \leq &\vol(K_{X'}+\pi_*\Sigma'+(4d+2)\pi_*G)\\
    \leq &\vol(K_{X'}+c\Delta'+(10d+3)(n+1)(K_{X'}+\Delta'+\alpha L'))\\
    \leq &\vol(K_{X'}+c\Delta'+t(K_{X'}+\beta(\Delta'+\alpha L'))+(10d+3)(n+1)(K_{X'}+\Delta'+\alpha L'))\\
     \leq &\vol((1+t+(10d+3)(n+1))(K_{X'}+\Delta'+\alpha L'))\\
    \leq &(1+t+(10d+3)(n+1))^dw,
\end{align*}
where the second inequality holds since $K_{X'}+\Delta'+\alpha L'-h'^*H$ is big.
Therefore, by \cite[Lemma 3.2]{haconBirationalAutomorphismsVarieties2013},
\begin{align*}
    \Sigma\cdot((4d+2)\mu_*G)^{d-1}&=\Sigma'\cdot((4d+2)G)^{d-1}\\
    &\leq 2^d\vol(Y',K_{Y'}+\Sigma'+(4d+2)G)\\
    &\leq 2^d(1+t+(10d+3)(n+1))^dw.
\end{align*}
Thus by \cite[Lemma 2.4.2 (4)]{haconBirationalAutomorphismsVarieties2013}, $(Y,\Sigma)$ forms a log bounded family. By \cite[Lemma 2.8]{hanBirationalBoundednessRationally2022}, $g:Y\to Z$ is a bounded morphism.

\item There exists a family of contractions \[\cY\to \cZ\to T\] and three effective divisors $\Omega$, $\cG$, and $\cF$ on $\cY$ such that for some closed point $t\in T$, the fiber $\cY_t\to \cZ_t$ is isomorphic to $g:Y\to Z$, with \[\Omega_t\simeq \Sigma,\quad \cG_t\simeq \mu_*G,\quad  \cF_t\simeq \mu_*F.\] Since $\mu_*G$ is a very ample divisor on $Y$ and ampleness is an open condition, after passing to a stratification we may assume that $\cG$ is ample over the generic point of $\cZ$. If we write \[\cJ_{\cY}=\cG+\cF,\] then $\cJ_{\cY}$ is big over $\cZ$. Setting \[J_Y=\mu_*G+\mu_*F,\] we have 
\[
J_Y\sim_{\bQ,\eta_Z}n\alpha \mu_*\pi^*L'.
\]

After taking a log resolution of $(\cY,\Omega)$ and passing to a stratification of $T$, we may assume that $T$ is smooth and $(\cY,\Omega)$ is log smooth over $T$. Replacing $\cJ_{\cY}$ by its pullback, it remains big over $\cZ$. After passing to a finite \'etale cover of a stratification of $T$ (see \cite[Claim 4.38.1]{kollarSingularitiesMinimalModel2013}), we may assume that every prime component of $\Omega$ restricts to a prime divisor fiberwise. Moreover, after replacing $(\cY,\Omega)$ by a sequence of blowups of strata, we extract all divisors whose log discrepancies with respect to $(\cY,(1-\epsilon)\Omega)$ are at most one. Up to a further stratification of $T$, this process can be assumed to be fiberwise. Therefore, the induced birational map \[\cY_t\dashrightarrow X'\dashrightarrow X\] does not extract any divisor.

Since $H$ is very ample and $H_Z\in |6dH|$ is general, we may assume that $(X,B+\tfrac{1}{2}f^*H_Z)$ is $\epsilon$-lc. By the canonical bundle formula, 
\[
K_X+B\sim_{\bQ} f^*(K_Z+B_Z+\M_Z).
\]
By the boundedness of the length of extremal rays, $K_Z+B_Z+\M_Z+3dH$ is ample, and hence
\[
K_X+B+\tfrac{1}{2}f^*H_Z \sim_{\bQ} f^*(K_Z+B_Z+\M_Z+\tfrac{1}{2}H_Z)
\]
is semi-ample.

Let $\Gamma_{\cY_t}$ be the strict transform of $B+\tfrac{1}{2}f^*H_Z$ on $\cY_t$, together with $(1-\tfrac{1}{2}\epsilon)E$, where $E$ is the reduced exceptional divisor of $\cY_t\dashrightarrow X$. Define $\Gamma_{\cY}$ to be the divisor supported on $\Omega$ whose restriction to $\cY_t$ is $\Gamma_{\cY_t}$. Since the coefficients of $B+\tfrac{1}{2}f^*H_Z$ lie in a finite set, the possible coefficients appearing in $\Gamma_{\cY_t}$ also belong to a finite set $\Phi\cup\{\tfrac{1}{2},1-\tfrac{1}{2}\epsilon\}$. Therefore, without loss of generality, we may assume that $\Gamma_{\cY}$ is fixed on $\cY$.

By construction, $(X,B+\tfrac{1}{2}f^*H_Z)$ is a good minimal model of $(\cY_t,\Gamma_{\cY_t})$. By \cite[Theorem 1.2]{haconBoundednessModuliVarieties2018}, the pair $(\cY,\Gamma_{\cY})$ admits a relative good minimal model $(\cV,\Gamma)$ over $T$, which, up to a stratification of $T$, induces good minimal models fiberwise. By the boundedness of the length of extremal rays, the induced map $\cV\dashrightarrow \cZ$ is a morphism. If we denote the pushforward of $\cJ_{\cY}$ by $\cJ$, then $\cJ$ is big over $\cZ$. By \cite[Lemma 2.4]{haconExistenceLogCanonical2013}, the pair $(\cV_t,\Gamma_t)$ is isomorphic in codimension one to $(X,B+\tfrac{1}{2}f^*H_Z)$. 

Since $L'$ is numerically equivalent to the strict transform of $mA$ on the generic fiber of $X'\to Z$, and since $J_Y\sim_{\bQ,\eta_Z}n\alpha \mu_*\pi^*L'$, we conclude that $\cJ_t$ is numerically equivalent to the strict transform of $mn\alpha A$ on the generic fiber of $\cV_t\to Z$. Replacing $n$ by a bounded multiple, we may assume that 
$l:=mn\alpha$
is an integer.  
Thus, the pair $(\cV_t,\Gamma_t)$ and the integral divisor $\cJ_t$ are what we need.

\end{enumerate}
\end{proof}

\begin{remark}\label{big divisor in family}
    We remark that the relative bigness of $\cJ$ will be used in the proof of Theorem \ref{mainthm2}.
\end{remark}

\section{Polarized log Calabi--Yau fibrations: arbitrary coefficients}\label{sec:Polarized log Calabi--Yau fibrations: arbitrary coefficients}

In this section, we consider the boundedness of polarized log Calabi--Yau fibrations $f :((X, B),A) \to (Z, H)$ where the coefficients of $B$ are arbitrary. 

We first recall the definition and boundedness result for Fano type fibrations.

\begin{definition}[{\cite[Definition 1.1]{birkarBoundednessFanoType2024}}]\label{def:FT fib}
Let $d\in \bN$ and $r,\epsilon\in\mathbb{R}^{>0}$. 
 A ($d,r,\epsilon$)-\textit{Fano type fibration}  $f:(X,B)\to (Z,H)$ consists of 
\begin{enumerate}
   
    \item a projective $\epsilon$-lc pair $(X,B)$ of dimension $d$,
    \item a fibration $f:X\to Z$ such that $K_X+B\sim_{\bR}f^*N$ for some $\bR$-divisor $N$ on $Z$, 
    \item $-K_X$ is big over $Z$, i.e., $X$ is of Fano type over $Z$,
    \item $H\geq 0$ is a very ample divisor  on $Z$ with $H^{\dim Z}\leq r$, and
    \item $H-N$ is ample.
    \end{enumerate}
    
\end{definition}

\begin{thm}[{\cite[Theorem 1.3]{birkarBoundednessFanoType2024}}]
\label{FTF}
Let $d\in \bN$ and $r,\epsilon,\delta \in \bR^{>0}$. Consider the set of all $(d,r,\epsilon)$-Fano type fibrations $(X,B)\to (Z,H)$ and $\bR$-divisors $0\leq \Delta\leq B$ where the non-zero coefficients of $\Delta$ are larger than $\delta$. 

Then the set of such $(X,\Delta+f^*H)$ is log bounded.
\end{thm}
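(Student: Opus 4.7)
This statement is quoted from \cite[Theorem 1.3]{birkarBoundednessFanoType2024}, so I only sketch Birkar's strategy at a high level rather than reprove it. The plan is to produce a bounded polarization on the total space, then upgrade birational boundedness to genuine log boundedness via an MMP-in-family argument.

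First, I would observe that the base and the general fiber are both bounded. The pair $(Z,H)$ is log bounded since $H$ is very ample with $H^{\dim Z}\leq r$, while the general fiber $F$ of $f$ is an $\epsilon$-lc variety of dimension at most $d$ with $-K_F$ big and $K_F+B_F\sim_{\bR}0$, so $F$ lies in a bounded family by Birkar's BAB theorem \cite{birkarSingularitiesLinearSystems2021}. In particular $\vol(-K_F)$ is bounded above by a constant depending only on $d$ and $\epsilon$.

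Second, I would construct a bounded polarization on the total space $X$. Using the ampleness of $H-N$, where $K_X+B\sim_{\bR}f^*N$, together with the bigness of $-K_X$ over $Z$, one checks that for a fixed positive integer $a=a(d,r,\epsilon)$ the divisor $L:=-K_X+af^*H$ is big on $X$ with $\vol(L)$ bounded from above. The key technical input is Birkar's theory of bounded complements on the base \cite{birkarSingularitiesLinearSystems2021}, lifted to $X$ via the Fano-fibration canonical bundle formula, producing an effective $\bQ$-divisor $\Lambda\geq 0$ with $(X,\Lambda)$ sub-lc, $K_X+\Lambda\sim_{\bQ}0/Z$, and bounded Cartier index. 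Effective birationality for $\epsilon$-lc Fano type pairs then yields a uniform integer $n$ such that $|nL|$ defines a birational embedding of $X$ into a projective space of bounded dimension.

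Third, I would upgrade birational boundedness to actual log boundedness. Since the non-zero coefficients of $\Delta$ lie in $[\delta,1]$ and $\Delta\leq B$, the intersection $(\Delta+f^*H)\cdot L^{d-1}$ is bounded from above via $\Delta\leq B$, the relation $(K_X+B)\cdot L^{d-1}=f^*N\cdot L^{d-1}$, and the bound on $H\cdot L^{d-1}$. This controls the degree of $\Delta+f^*H$ in the ambient bounded projective space. An MMP-in-family argument in the style of \cite{haconBoundednessModuliVarieties2018}, together with a stratification of the parameter space and the lower bound $\delta$ on coefficients, then produces a bounded family $(\cX,\cE)\to T$ whose fibers recover the pairs $(X,\Delta+f^*H)$. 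The principal obstacle is the second stage: constructing the bounded polarization $L$ with uniformly bounded volume requires the full strength of Birkar's boundedness of $\bN$-complements in the relative setting and a controlled lift of base complements to the total space through the canonical bundle formula, which is the technical heart of \cite{birkarBoundednessFanoType2024}.
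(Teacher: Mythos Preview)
Your proposal sketches Birkar's original argument in \cite{birkarBoundednessFanoType2024}, but that is not what the paper does here. The statement as formulated is a mild variant of Birkar's Theorem~1.3: Birkar bounds $(X,\Delta)$, while the paper wants $(X,\Delta+f^*H)$. The paper's proof is accordingly a two-line reduction, not a reproof. By \cite[Theorem 1.4]{birkarBoundednessFanoType2024} there is a fixed $t\in(0,1)$, depending only on $d,r,\epsilon$, such that $(X,B+tf^*H)$ is $\tfrac{\epsilon}{2}$-lc; then $(X,B+tf^*H)\to(Z,2H)$ is a $(d,2^dr,\tfrac{\epsilon}{2})$-Fano type fibration, and Birkar's Theorem~1.3 applied to this new fibration (with $\tilde\Delta=\Delta+tf^*H\leq B+tf^*H$) yields log boundedness of $(X,\Delta+f^*H)$, since log boundedness depends only on the support and $\Supp(\Delta+f^*H)=\Supp(\Delta+tf^*H)$.

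So your sketch is not wrong as a summary of Birkar's paper, but it misses the actual content of the proof here, which is simply to absorb a multiple of $f^*H$ into the boundary via the singularity bound from \cite[Theorem 1.4]{birkarBoundednessFanoType2024} and then cite Birkar's result as a black box. No effective birationality, lifted complements, or MMP-in-family arguments need to be re-run; they are already packaged inside the cited theorems.
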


\begin{proof}
    By \cite[Theorem 1.4]{birkarBoundednessFanoType2024}, there exists a positive number $t<1$ depending only on $d,r,\epsilon$ such that $(X,B+tf^*H)$ is $\frac{\epsilon}{2}$-lc. Then $(X,B+tf^*H)\to Z$ is a $(d,2^dr,\frac{\epsilon}{2})$-Fano type fibration, hence by \cite[Theorem 1.3]{birkarBoundednessFanoType2024}, $(X,\Delta+f^*H)$ is log bounded.
\end{proof}

\begin{lem}\label{induction method}
Let $d\in \bN$ and $r,\epsilon,\delta\in \bR^{>0}$. Assume that 
\begin{itemize}
\item $(X,B)$ is an $\epsilon$-lc pair of dimension $d$,
\item $f:X\to Z$ is a contraction to a normal projective variety,
\item $K_X+B\sim_{\bR}f^*N$ for some $\bR$-divisor $N$ on $Z$,
\item $H$ is a very ample divisor on $Z$ such that $H^{\dim Z}\leq r$ and $H-N$ is ample,
\item $0\leq \Delta\leq B$ is an $\bR$-divisor on $X$ such that the non-zero coefficients of $\Delta$ are larger than $\delta$,
\item $f:X\to Z$ factors through a contraction $h:X\to Y$, and denote the morphism $Y\to Z$ by $g$,
\item $-K_X$ is big over $Y$,
\item $\mu: Y\dashrightarrow Y'/Z$ is a birational map which does not extract any divisor, and denote the morphism $Y'\to Z$ by $g'$, and
\item $(Y',g'^*H)$ is log bounded.
\end{itemize}
Then there exists a $\bQ$-factorial projective variety $X'$ and a contraction $f':X'\to Z$ such that 
\begin{enumerate}
    \item $\nu:X\dashrightarrow X'/Z$ is an isomorphism in codimension one,
    \item $(X',B')$ is $\epsilon$-lc, where $B'=\nu_*B$,
    \item $f':X'\to Z$ factors through $h':X'\to Y'$, where $-K_{X'}$ is big over $Y'$, and
    \item $(X',\Delta'+f'^*H)$ is log bounded, where $\Delta'=\nu_*\Delta$.
\end{enumerate}
\[\xymatrix{
X\ar@{-->}[rr]^\nu \ar[d]_h & & X'\ar[d]^{h'}\\
Y\ar@{-->}[rr]^\mu \ar[dr]_g & & Y'\ar[dl]^{g'}\\
& Z &}\]
\end{lem}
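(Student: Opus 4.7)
The plan is to construct $X'$ via a relative MMP over $Y'$, and then invoke Theorem~\ref{FTF} for log boundedness.

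\emph{Constructing $X'$ by relative MMP.} Let $W$ be a common resolution of $Y$ and $Y'$ over $Z$, and let $\tilde{X}$ be a $\bQ$-factorial log resolution of the main component of $X\times_Y W$, with morphisms $\pi:\tilde{X}\to X$ and $\tilde{h}:\tilde{X}\to Y'$. For a small $\eta>0$, set $\tilde{B}:=\pi^{-1}_*B+(1-\eta)E$, where $E$ is the reduced sum of $\pi$-exceptional divisors, so that $(\tilde{X},\tilde{B})$ is klt and $K_{\tilde{X}}+\tilde{B}=\pi^*(K_X+B)+F$ for some $F\geq 0$ which is strictly positive on every $\pi$-exceptional component (using $(X,B)$ $\epsilon$-lc, so all discrepancies on $\pi$-exceptional divisors exceed $\epsilon$). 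As $K_X+B\sim_{\bR}f^*N$ is pulled back from $Z$ through $\tilde{h}:\tilde{X}\to Y'\to Z$, we obtain
\[
K_{\tilde{X}}+\tilde{B}\sim_{\bR}F/Y'.
\]
The Fano-type hypothesis on $h$ (namely $(X,B)$ klt, $K_X+B\sim_{\bR}0/Y$, and $-K_X$ big over $Y$) yields the existence of $\Gamma\geq 0$ with $(X,\Gamma)$ klt and $-(K_X+\Gamma)$ ample over $Y$; pulling this back to $\tilde{X}$ and using that $\mu$ is a birational contraction endows $\tilde{X}$ with a log Fano-type structure over $Y'$. Therefore a $(K_{\tilde{X}}+\tilde{B})$-MMP over $Y'$ can be run and terminates at a $\bQ$-factorial model $\tilde{X}\dashrightarrow X'$. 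Since the pushforward of $K_{\tilde{X}}+\tilde{B}$ must become nef over $Y'$ and is $\bR$-linearly equivalent to the pushforward of $F\geq 0$, the negativity of MMP forces $F$ to be entirely contracted, so every $\pi$-exceptional divisor disappears. Hence the induced birational map $\nu:X\dashrightarrow X'$ is an isomorphism in codimension one, $\tilde{h}$ factors through a morphism $h':X'\to Y'$, and we set $f':=g'\circ h':X'\to Z$.

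\emph{Verifying the required properties.} Set $B':=\nu_*B$ and $\Delta':=\nu_*\Delta$. On a common resolution of $X$ and $X'$, the classes $K_X+B$ and $K_{X'}+B'$ pull back to divisors that are both $\bR$-linearly equivalent to $f^*N$, so their difference is $\bR$-principal and supported on the (common) exceptional locus of $\nu$; the negativity lemma then gives equality of pullbacks, so discrepancies of all divisors match and $(X',B')$ is $\epsilon$-lc. Similarly $K_{X'}+B'\sim_{\bR}0/Y'$, the generic fiber of $h':X'\to Y'$ agrees with that of $h:X\to Y$ via the birational identification induced by $\mu$, so $-K_{X'}$ is big over $Y'$, and the non-zero coefficients of $\Delta'$ still exceed $\delta$.

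\emph{Applying Theorem~\ref{FTF}.} Log boundedness of $(Y',g'^*H)$ places $Y'$ in a bounded family carrying a uniformly very ample divisor $H_{Y'}$ with $(H_{Y'})^{\dim Y'}\leq r'$ for some $r'$, which can be chosen so that $\Supp(g'^*H)\subseteq\Supp(H_{Y'})$. Then $h':(X',B')\to(Y',H_{Y'})$ is a $(d,r',\epsilon)$-Fano-type fibration: $K_{X'}+B'\sim_{\bR}0/Y'$ means the relevant divisor on $Y'$ is trivial, so $H_{Y'}$ itself plays the role of $H-N$ and is ample. Theorem~\ref{FTF} then yields log boundedness of $(X',\Delta'+h'^*H_{Y'})$, and since $\Supp(f'^*H)=(h')^{-1}\Supp(g'^*H)\subseteq\Supp(h'^*H_{Y'})$, the pair $(X',\Delta'+f'^*H)$ is log bounded as well.

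\emph{Main obstacle.} The principal technical point is the first step: justifying the relative MMP over $Y'$, both its termination and the fact that it contracts precisely the $\pi$-exceptional divisors so that $\nu$ is iso in codim 1. This hinges on combining the Fano-type hypothesis on $h$ with the birational-contraction property of $\mu$ to endow $\tilde{X}$ with a suitable log Fano-type structure over $Y'$, enabling MMP with scaling to terminate. Once this is done, the $\epsilon$-lc property and the numerical conditions on $(X',B')$ follow from the negativity lemma and the agreement of generic fibers, and log boundedness is a clean application of Theorem~\ref{FTF}.
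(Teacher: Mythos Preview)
Your overall strategy matches the paper's: build an $X'$ isomorphic to $X$ in codimension one admitting a contraction $h':X'\to Y'$, then apply Theorem~\ref{FTF}. The paper obtains $X'$ by quoting \cite[Propositions~3.6 and~3.7]{birkarBoundednessEllipticCalabiYau2024}, whereas you try to produce it by running an MMP on a resolution $\tilde X$ over $Y'$. Two points in your argument do not go through.

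First, the assertion that ``negativity of MMP forces $F$ to be entirely contracted'' is not valid: termination only gives that the pushforward $F_{X'}$ is nef over $Y'$, and an effective nef divisor need not vanish. One would want $F$ to be very exceptional over $Y'$ so that \cite[Theorem~1.8]{birkarExistenceLogCanonical2012} applies, but $F$ need not even be vertical over $Y'$: exceptional divisors of $\tilde X\to X\times_Y W$ lying over singularities of $X$ can dominate $Y'$ whenever those singularities dominate $Y$. The paper sidesteps this by first arranging $Y$ to be $\bQ$-factorial and then invoking the cited Proposition~3.7, whose proof works with a small $\bQ$-factorialisation of $X$ rather than a full resolution; this is precisely where the hypothesis that $\mu$ is a birational contraction enters, and you correctly flag it as the main obstacle without resolving it.

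Second, ``$K_{X'}+B'\sim_{\bR}0/Y'$ means the relevant divisor on $Y'$ is trivial'' is wrong: one has $K_{X'}+B'\sim_{\bR}h'^*g'^*N$, so the divisor to be dominated by $H_{Y'}$ is $g'^*N$, not $0$. The fix, as in the paper, is to choose $H_{Y'}$ with $H_{Y'}-g'^*H$ ample (possible by log boundedness of $(Y',g'^*H)$), so that $H_{Y'}-g'^*N=(H_{Y'}-g'^*H)+g'^*(H-N)$ is ample since $H-N$ is.
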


\begin{proof}
Since $K_X+B\sim_{\bR,Z}0$, it follows that $K_X+B\sim_{\bR,Y}0$. By
\cite[Proposition 3.6]{birkarBoundednessEllipticCalabiYau2024}, after replacing $Y$ by a $\bQ$-factorial model and replacing $X$ accordingly, we may assume that $Y$ is $\bQ$-factorial.

Let $A'$ be an ample divisor on $Y'$. As $\mu: Y\dashrightarrow Y'$ does not extract any divisor, we can define $A$ to be the pullback of $A'$ on $Y$. Note that $h^*A$ is abundant over $Z$ because the Kodaira and numerical dimension are invariant by pullback under contractions, see \cite[Chapter 5, Proposition 2.7(4)]{nakayamaZariski-decompositionandAbundance}. By \cite[Lemma 2.13]{hashizumeMMPforAbundantLcPairs2020}, for $0<\epsilon\ll1$, $(X,B+\epsilon h^*A)$ admits a good minimal model $X''$ over $Z$. Since $Y'$ is the ample model of $(X,B+\epsilon h^*A)$ over $Z$, there is a contraction $h'':X''\to Y'$.

Let $B''$ denote the strict transform of $B$ on $X''$. Then $(X'',B'')$ remains an $\epsilon$-lc pair. Let $\{E_1,\cdots,E_k\}$ be the divisors contracted by the birational map $X\dashrightarrow X''$. The log discrepancy of any $E_i$ with respect to $(X'',B'')$ is at most 1. Therefore, by \cite[Corollary 1.4.3]{birkarExistenceMinimalModels2010}, there exists a birational model $X'\to X$ on which the only extracted divisors are $\{E_1,\cdots,E_k\}$. Consequently, we obtain a birational map
\[
\nu:X \dashrightarrow X'/Z
\] 
which is an isomorphism in codimension one, together with a contraction \[h':X'\to Y'.\] 
Let $f':X'\to Z$ denote the induced morphism $X'\to Y'\to Z$. 
Define \[K_{X'}+B' := \nu_*(K_X+B).\] Since $K_X+B\sim_\bR f^*N$, we have 
\[
K_{X'}+B'\sim_\bR f'^*N,
\] 
and $(X',B')$ is also $\epsilon$-lc. 

As $(Y',g'^*H)$ is log bounded, there exists a constant $r'\in \bR^{>0}$ and a very ample divisor $H_{Y'}$ on $Y'$ such that 
\[
H_{Y'}^{\dim Y'}\leq r', \quad \text{and} \quad H_{Y'}-g'^*H \text{ is ample.}
\] 
In particular, $H_{Y'}-g'^*N$ is ample. 
Note that $-K_{X'}$ is big over $Y'$ because $-K_X$ is big over $Y$, and $\nu:X \dashrightarrow X'$ is isomorphic in codimension one. Therefore, 
\[
h':(X',B')\to (Y',H_{Y'})
\] 
is a $(d,r',\epsilon)$-Fano type fibration. By Theorem \ref{FTF}, $(X',\Delta'+f'^*H)$ is log bounded.
\end{proof}

\begin{remark}\label{birational map which does not extract any divisor}
    Applying Lemma \ref{induction method} in the special case where $X=Y$, we obtain the following. Suppose that
\[
X \dashrightarrow X''/Z
\] 
is a birational map which does not extract any divisor and $(X'',f''^*H)$ is log bounded, where $f'': X''\to Z$. Then there exists a $\bQ$-factorial variety $X'$ that is isomorphic to $X$ in codimension one over $Z$, and 
\[
(X',\Delta'+f'^*H)
\] 
is log bounded, where $f':X'\to Z$.
\end{remark}

For the polarized log Calabi--Yau fibration $((X,B),A)\to (Z,H)$, if the horizontal part $B^\h\neq 0$, we can decompose it into a Fano type fibration and a lower-dimensional polarized log Calabi--Yau fibration.

\begin{prop}\label{nonzero horizontal part}
Assume that Theorem \ref{mainthm1} holds in dimension $\leq d-1$. Moreover, assume it also holds when $X$ is of dimension $d$ and $B$ is vertical over $Z$.
Then Theorem \ref{mainthm1} holds in dimension $d$.
\end{prop}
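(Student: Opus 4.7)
If $B^h = 0$, then $B$ is vertical over $Z$ and the conclusion is immediate from the second hypothesis. Hence we may assume $B^h \neq 0$. Following the strategy of \cite[Theorem 11.1]{birkarSingularitiesFanoFibrations2023}, the plan is to use a relative MMP to decompose $f$ into a Fano type fibration $\psi \colon X^\sharp \to Y$ over $Z$ together with a polarized log Calabi--Yau fibration $g \colon Y \to Z$ of strictly smaller total dimension, and then to combine the induction hypothesis applied to $Y \to Z$ with Lemma \ref{induction method} and Remark \ref{birational contraction}.

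Since $K_X + B \sim_{\bR} 0/Z$ and $B^h > 0$ is horizontal, the divisor $K_X + B - tB^h \sim_{\bR} -tB^h/Z$ is not pseudo-effective over $Z$ for any $t > 0$. Passing first to a small $\bQ$-factorial modification of $(X,B)$ if necessary, I would run the relative MMP over $Z$ on $K_X + B - tB^h$ for $0 < t \ll 1$ with scaling of an ample divisor; it terminates with a Mori fiber contraction $\psi \colon X^\sharp \to Y$ over $Z$. On $X^\sharp$ the push-forward $B^\sharp$ of $B$ keeps $(X^\sharp, B^\sharp)$ $\epsilon$-lc with $K_{X^\sharp} + B^\sharp \sim_{\bR} 0/Z$, while the Mori fiber space property forces the $\psi$-horizontal part of $B^\sharp$ to be $\psi$-ample. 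Together with $-K_{X^\sharp} \sim_{\bR} B^\sharp/Y$, this makes $\psi$ a Fano type fibration, and the induced contraction $g \colon Y \to Z$ satisfies $\dim Y < d$.

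Next, the canonical bundle formula applied to $\psi \colon (X^\sharp, B^\sharp) \to Y$ yields $K_Y + B_Y + M_Y \sim_{\bR} g^*N$ with $(Y, B_Y + M_Y)$ generalized $\epsilon'$-lc for some $\epsilon'$ depending only on $(d,\epsilon)$, and a fiberwise polarization on $Y \to Z$ is obtained from $A$ via $\psi$ with volume bounded in terms of $(d, v)$. The induction hypothesis in dimension $\leq d-1$ then gives log boundedness in codimension one of $(Y, g^*H)$. Applying Lemma \ref{induction method} to the factorization $X^\sharp \to Y \to Z$ produces $X^\flat$ isomorphic in codimension one to $X^\sharp$ with $(X^\flat, \Delta^\sharp + f^{\flat *}H)$ log bounded, where $\Delta^\sharp$ is the push-forward of $\Delta$. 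Finally, since $X \dashrightarrow X^\flat$ is a birational contraction, Remark \ref{birational contraction} yields a $\bQ$-factorial $X'$ isomorphic in codimension one to $X$ with $(X', \Delta' + f'^*H)$ log bounded.

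The main obstacle is bridging the generalized-pair output of the canonical bundle formula with the ordinary-pair formulation of Theorem \ref{mainthm1}: one has to replace the moduli \textbf{b}-divisor $M_Y$ by an effective $\bQ$-linearly equivalent divisor with controlled support and absorb it into the boundary, all while preserving the coefficient and singularity constraints needed to invoke the induction hypothesis. A secondary issue is verifying that the polarization on $Y$ inherited from $A$ through the Fano type fibration $\psi$ yields valid data $(\dim Y, v', r, \epsilon')$; this typically requires choosing the fiberwise polarization as a suitable push-forward of an intersection of $A$ with components coming from a section of $\psi$ over the generic point of $Y$.
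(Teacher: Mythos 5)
Your reduction of the case $B^h=0$, the preservation of $\epsilon$-lc-ness along an MMP that is crepant for $K_X+B$ over $Z$, and the final use of Lemma \ref{induction method} together with Remark \ref{birational contraction} all match the paper. But your decomposition is different from the paper's, and the difference is exactly where your argument has a genuine gap: you take $Y$ to be the base of a Mori fiber space obtained by running a $(K_X+B-tB^h)$-MMP, i.e.\ a $(-B^h)$-MMP, and then you need an integral polarization on $Y$, big over $Z$, with fiberwise volume bounded in terms of $(d,v)$. You only assert that such a divisor is ``obtained from $A$ via $\psi$'', and your suggested fix (push-forward of an intersection of $A$ with a section of $\psi$) does not produce, with any argument given, an effective integral $\bQ$-Cartier divisor on $Y$ that is big over $Z$ with bounded fiber volume. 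The paper avoids this by choosing $Y$ differently: after a chain of relative ample models of $lK_X+A$ it arranges that the relative pseudo-effective threshold $t$ of $A$ satisfies $t\geq 1$, hence lies in a finite set; then $Y$ is defined as the relative ample model of $K_{X'}+tA'$, so that $K_{X'}+tA'\sim_\bQ 0$ over $Y$ and, using BAB-boundedness of the Fano-type general fiber $G$ of $X'\to Y$, bounded Cartier index, and $\Pic^0(G)=0$, the divisor $p(K_{X'}+tA')$ descends to an integral divisor $J$ on $Y$; its fiberwise volume is bounded by an intersection computation against $mA$ (effective birationality on $F_X$), and effective birationality on $F_Y$ then yields an effective $J'\sim nJ/Z$ with bounded fiber volume. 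None of this is available for your MFS base, since $K+tA$ is not trivial over your $Y$ and $t$ has not been pinned down to a finite set, so the bounded integrality constant $p$ is missing.

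A second, smaller gap is the generalized-to-ordinary pair conversion, which you flag as ``the main obstacle'' but do not resolve: the paper settles it by invoking the proof of Ambro's theorem to replace $K_Y+D_Y+S_Y$ (generalized $\tau$-lc, from the adjunction for $(X',B')\to Y$ with $-K_{X'}$ big over $Y$) by an $\bR$-linearly equivalent $K_Y+\tilde D_Y$ with $(Y,\tilde D_Y)$ $\tfrac{\tau}{2}$-lc. Note also that your worry about ``preserving coefficient constraints'' is moot, since Theorem \ref{mainthm1} is stated for arbitrary real coefficients and the induction only needs the data $(\dim Y, v', r, \tfrac{\tau}{2})$.
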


\begin{proof}
The argument follows closely that of \cite[Theorem 10.1] {birkarSingularitiesFanoFibrations2023}.  
\begin{enumerate} [label=\textsl{Step} \arabic{enumi}., wide=13pt, itemsep=13pt]
\item By assumption it suffices to consider the case where the horizontal part 
$B^\h$ of $B$ is non-zero. Then $K_X$ is not pseudo-effective over $Z$ because $K_X+B\sim_{\bR,Z} 0$. Let $t$ be the smallest number such that $K_X+tA$ is pseudo-effective over $Z$. By \cite[Lemma 4.11]{birkarGeometryPolarisedVarieties2023}, $t$ is bounded from above. Moreover, if we consider $(X,0,\overline{tA})$ as a generalized pair with nef part $\overline{tA}$, it follows from \cite[Lemma 4.4]{birkarEffectivityIitakaFibrations2016} that $K_X+tA$ admits a good minimal model over $Z$, which is not of general type. Considering intersection numbers shows that $t$ belongs to a fixed set of rational
number which is discrete away from zero, see \cite[Lemma 4.11]{birkarGeometryPolarisedVarieties2023}.

\item In this step we reduce to the case when $X$ is $\bQ$-factorial and $t \geq 1$.

Let $l$ be the largest integer such that $\widetilde{A} = lK_X + A$ is big over $Z$. Then
\[
    \vol(\widetilde{A}_F) = \vol(-lB_F + A_F) \leq \vol(A_F) \leq v,
\]
where $F$ is the general fiber of $f \colon X \to Z$. 
Let $f_1 \colon X_1 \to Z$ be the ample model of $\widetilde{A}$ over $Z$, and let $B_1, A_1$ be the pushdowns of $B, \widetilde{A}$ on $X_1$. 
If the horizontal part $B_1^\h = 0$, then $(X_1, f_1^*H)$ is log bounded in codimension one by assumption. 
By Remark \ref{birational map which does not extract any divisor}, this implies that $(X, \Delta+f^*H)$ is also log bounded in codimension one. 
Therefore, we may assume $B_1^\h \neq 0$. 

Repeating this procedure, we obtain a sequence of birational maps over $Z$:
\[
    ((X,B),A )\dasharrow ((X_1,B_1),A_1 )\dasharrow \cdots \dasharrow ((X_k,B_k),A_k )\dasharrow \cdots
\]
satisfying $B_i^\h \neq 0$ for all $i$. 
Since the Picard number $\rho(X)$ is finite, there exists $k \in \bN$ such that $X_i \dasharrow X_{i+1}$ is isomorphic in codimension one for all $i \geq k$. 
By the definition of $\widetilde{A}_k$, we know $K_{X_k} + \widetilde{A}_k$ is not big over $Z$, hence $K_{X_{k+1}} + A_{k+1}$ is not big over $Z$. 
Denote by $t_{k+1}$ the smallest number such that $K_{X_{k+1}} + t_{k+1}A_{k+1}$ is pseudo-effective over $Z$. 
Then $t_{k+1} \geq 1$. 
By Remark \ref{birational map which does not extract any divisor}, to prove that $(X,\Delta+f^*H)$ is log bounded in codimension one, it suffices to show that $(X_{k+1}, f_{k+1}^*H)$ is log bounded in codimension one, where $f_{k+1} \colon X_{k+1} \to Z$. 
Thus we may replace $((X,B),A)$ with $((X_{k+1},B_{k+1}),A_{k+1})$ and assume that $X$ is $\bQ$-factorial and $t \geq 1$.

\item Since $t$ belongs to a fixed set of rational numbers which is discrete away from zero, and since $t \geq 1$ and $t$ is bounded above, there are only finitely many possibilities for $t$. 
In the following we assume that $t$ is fixed. 

View $(X,\N)$ as a generalized pair over $Z$ with nef part $\N:=\overline{tA}$. 
Then $(X,\N)$ is an $\epsilon$-lc generalized pair. 
By \cite[Lemma 4.4]{birkarEffectivityIitakaFibrations2016}, there exists a good minimal model $f' \colon X' \to Z$ of $K_X+tA$ over $Z$. 
Let $B', \Delta', A'$ be the pushdowns of $B, \Delta, A$ on $X'$. 
By Remark \ref{birational map which does not extract any divisor}, it suffices to prove that $(X', f'^*H)$ is log bounded in codimension one. 

Let $h \colon X' \to Y/Z$ be the non-birational contraction induced by $K_{X'}+tA'$, and denote the morphism $Y \to Z$ by $g$. 
By \cite{filipazziGeneralizedCanonicalBundle2020}, there exists a generalized canonical bundle formula
\[
    K_{X'} + tA' \sim_\bQ h^*(K_Y + C_Y + \R_Y).
\]
Since $A'$ is big over $Z$, it follows that $-K_{X'}$ is big over $Y$. 
Hence, by \cite[Theorem 8.3]{birkarSingularitiesFanoFibrations2023}, $(Y,C_Y,\R)$ is a $\tau$-lc generalized pair for some fixed $\tau \in \bR^{>0}$. 

As $t$ is fixed, there exists $p \in \bN$ such that $p(K_{X'}+tA')$ is integral. 
Let $G$ be the general fiber of $h \colon X' \to Y$. 
Then $G$ is $\epsilon$-lc and belongs to a bounded family by \cite{birkarSingularitiesLinearSystems2021}. 
After replacing $p$ by a bounded multiple, we may assume that $p(K_G+tA_G)$ is Cartier by \cite[Theorem 1.10]{hanACCLocalVolumes2023}. 
Since $G$ is of Fano type, we have $\Pic^0(G)=0$, and thus $p(K_G+tA_G) \sim 0$. 
Therefore, there exists a rational function $\alpha$ on $X'$ such that $p(K_{X'}+tA')+\Div(\alpha)$ is vertical over $Y$. 
Since
\[
    p(K_{X'}+tA') + \Div(\alpha) \sim_{\bQ,Y} 0,
\]
it follows from \cite[Lemma 2.5]{chenEffectiveLogIitaka2024} that $p(K_{X'}+tA')+\Div(\alpha)$ is the pullback of a $\bQ$-Cartier $\bQ$-divisor on $Y$. 
Hence we obtain the canonical bundle formula
\[
    p(K_{X'}+tA') \sim ph^*(K_Y+C_Y+\R_Y).
\]
Since $p(K_{X'}+tA')$ is integral and the multiplicities of the fibers of $h$ over codimension one points are bounded, 
after replacing $p$ by a bounded multiple we may assume that
\[
    J := p(K_Y+C_Y+\R_Y)
\]
is an integral divisor.

\item In this step we prove that the volume of the restriction of $J$ on the general fiber of $g:Y\to Z$ is bounded from above.

Let $\phi:W\to X$ and $\psi:W\to X'$ be common resolutions. Pick a general point of $Z$ and let $F_W,F_X,F_{X'},F_Y$ be the corresponding fiber over this point. By \cite[Theorem 1.1]{birkarGeometryPolarisedVarieties2023}, there exists a fixed positive integer $m$ such that $|mA|_{F_X}|$ defines a birational map. Let $c=\dim F_W$ and $e=\dim F_Y$. Then
\begin{align*}
    \vol(J|_{F_Y})&\leq  (\phi^*(mA)|_{F_W})^{c-e}\cdot(\psi^*(p(K_{X'}+tA'))|_{F_W})^e\\
    &\leq m^{c-e}p^e\vol(\phi^*A|_{F_W}+\psi^*(K_{X'}+tA')|_{F_W})\\
    &\leq m^{c-e}p^e\vol(\phi^*(K_X+(1+t)A)|_{F_W})\\
    &\leq m^{c-e}p^e\vol((1+t)A|_{F_X})\leq (1+t)^cm^{c-e}p^ev.
\end{align*}

\item Applying \cite[Theorem 1.1]{birkarGeometryPolarisedVarieties2023} to a $\bQ$-factorialization of $F_Y$ and the divisor $J$, we conclude that there exists a fixed positive integer $n$ such that $|nJ|_{F_Y}|$ defines a birational map. Therefore, there exists an effective integral divisor $J'$ such that $J'\sim nJ/Z$, and then 
\[
\vol(J'|_{F_Y})\leq v',
\]
where $v'=(1+t)^cm^{c-e}n^ep^ev$.

By \cite[Lemma 2.11]{zhuBoundednessStableMinimal2025} (see also \cite[Theorem 1.9]{birkarSingularitiesFanoFibrations2023}), there exists a fixed $\tau\in \bR^{>0}$ such that we can write a canonical bundle formula
\[
    K_{X'}+B' \sim_{\bR} h^*(K_Y+D_Y+\mathbf{S}_Y),
\]
where $(Y,D_Y,\mathbf{S})$ is a $\tau$-lc generalized pair. By \cite[Theorem 4.1]{ambroModuliBdivisorLctrivial2005}, we may choose a boundary $\widetilde{D}_Y$ such that 
\[
K_Y+\widetilde{D}_Y \sim_{\bR} K_Y+D_Y+\mathbf{S}_Y
\]
and $(Y,\widetilde{D}_Y)$ is $\tfrac{\tau}{2}$-lc. Consequently, 
\[
g:((Y,\widetilde{D}_Y),J') \to (Z,H)
\] 
is a $(\dim Y,v',r,\tfrac{\tau}{2})$-polarized log Calabi--Yau fibration. By the induction hypothesis, $(Y,g^*H)$ is log bounded in codimension one. Hence, by Lemma \ref{induction method}, $(X',f'^*H)$ is log bounded in codimension one. Finally, by Remark \ref{birational map which does not extract any divisor}, the same holds for $(X,\Delta+f^*H)$.

\end{enumerate}
\end{proof}

If the horizontal part $B^\h$ vanishes, we can run an MMP for very exceptional divisors to reduce the problem to Theorem \ref{thm:bdd of weak pcy fibration} with $\Phi=\{0\}$.
\begin{proof}[Proof of Theorem \ref{mainthm1}]
By Proposition \ref{nonzero horizontal part}, it suffices to treat the case where $B$ is vertical over $Z$.

By \cite[Lemma 2.11]{zhuBoundednessStableMinimal2025} (see also \cite[Theorem 1.9]{birkarSingularitiesFanoFibrations2023}), there exists a fixed $\delta\in \bR^{>0}$ such that we can write the canonical bundle formula \[K_X+B\sim_\bR f^*(K_Z+B_Z+\M_Z),\] where $(Z,B_Z,\M)$ is a $\delta$-lc generalized pair. 
By \cite[Theorem 2.3]{birkarBoundednessFanoType2024}, there exists a $\bQ$-factorialization 
\[\mu \colon Z' \to Z\] such that $Z'$ belongs to a bounded family. 
Let $H'$ be a very ample divisor on $Z'$ satisfying $H'^{\dim Z'} \leq r'$ for some fixed $r' \in \bR^{>0}$, 
and such that $H' - \mu^*H$ is ample. 

By \cite[Proposition 3.6]{birkarBoundednessEllipticCalabiYau2024}, there exists a $\bQ$-factorial $\epsilon$-lc pair 
$(X',B')$, isomorphic to $(X,B)$ in codimension one, together with a contraction \[f' \colon X' \to Z'\] satisfying $K_{X'}+B'\sim_{\bR,Z'}0$. 
Let $A'$ be the strict transform of $A$ in $X'$. Then 
\[
\vol(A'|_{F'}) = \vol(A|_F) \leq v,
\] 
where $F, F'$ denote the general fibers of $f \colon X \to Z$ and $f' \colon X' \to Z'$, respectively.

Suppose that $f' \colon X' \to Z'$ admits a very exceptional divisor $E$. We 
run an MMP for $(X', B' + \lambda E)$ over $Z'$, where $\lambda$ is a sufficiently small positive number. 
By \cite[Theorem 1.8]{birkarExistenceLogCanonical2012}, this MMP terminates with a model $X''$ on which $E$ is contracted. 
If $X'' \to Z'$ still has a very exceptional divisor, we repeat this process. 
Since the relative Picard number $\rho(X'/Z')$ strictly decreases each time, after finitely many steps we reach a contraction \[g \colon Y \to Z'\] with no very exceptional divisor. 
Let $B_Y$ and $A_Y$ be the pushdowns of $B'$ and $A'$ to $Y$. 
Then $K_Y + B_Y \sim_{\bR,Z'}0$ and $(Y,B_Y)$ is $\epsilon$-lc. 
Moreover, since $X' \dashrightarrow Y$ is an isomorphism over an open subset of $Z'$, we have 
\[
\vol(A_Y|_{F_Y}) = \vol(A'|_{F'}) \leq v,
\]
where $F_Y$ is the general fiber of $g \colon Y \to Z'$. 

Let $Y'$ be the ample model of $A_Y$ over $Z'$, and denote by $B'_Y$ and $A'_Y$ the pushdowns of $B_Y$ and $A_Y$ to $Y'$. 
Then $K_{Y'} + B'_Y \sim_{\bR,Z'} 0$ and $(Y',B'_Y)$ is $\epsilon$-lc. 
Furthermore,
\[
\vol(A'_Y|_{F'_Y}) = \vol(A_Y|_{F_Y}) \leq v,
\] 
where $F'_Y$ is the general fiber of $g' \colon Y' \to Z'$. 
Therefore, \[g': ((Y',B'_Y), A'_Y)\to Z'\] is a $(d,v,r',\epsilon)$-polarized log Calabi--Yau fibration.

Note that $g' \colon Y' \to Z'$ has no very exceptional divisor. 
Since $B'_Y$ is vertical over $Z'$ and $Z'$ is $\bQ$-factorial, it follows that $B'_Y$ is of fiber type over $Z'$. 
Hence, there exists an effective $\bR$-Cartier $\bR$-divisor $C'$ on $Z'$ such that \[B'_Y = g'^*C'.\]
Consequently, 
\[
K_{Y'} \sim_{\bR} g'^*(\mu^*N - C').
\] 
Note that $H' - (\mu^*N - C')$ is in general pseudo-effective, rather than ample. 
Hence, \[g' \colon ((Y',0), A'_Y) \to Z'\] is merely a weak $(d,0,v,r',\epsilon)$-polarized log Calabi--Yau fibration. 

By Theorem \ref{thm: finite coeff bdd in codim 1}, $(Y', g'^*H')$ is log bounded in codimension one. 
Since $H' - \mu^*H$ is ample, it follows that $(Y', \widetilde{g}'^*H)$ is also log bounded in codimension one, 
where $\widetilde{g}' \colon Y' \to Z$. 
Finally, as $X \dashrightarrow Y'$ is a birational map which does not extract any divisor, 
Remark \ref{birational map which does not extract any divisor} implies that $(X, \Delta + f^*H)$ is log bounded in codimension one.
\end{proof}

\section{Fibrations whose general fibers have vanishing irregularity}\label{sec:Fibrations whose general fibers have vanishing irregularity}
In this section, we consider the boundedness of polarized log Calabi--Yau fibrations $f : ((X, B), A) \to (Z, H)$ such that $\Supp R^1f_*\cO_X\subsetneq Z$.

The following lemma addresses the issue when the base of a polarized log Calabi--Yau fibration is not $\mathbb{Q}$-factorial.

\begin{lem}\label{bounded Q-factorialization} 
    Let $d,r\in \bN$ and $\epsilon\in \bR^{>0}$. Assume that
    \begin{itemize}
        \item $(X,B,\M)$ is an $\epsilon$-lc generalized projective pair of dimension $d$, 
        \item $H$ is a very ample divisor such that $H^d\leq r$, and 
        \item $H-(K_X+B+\M_X)$ is ample.
    \end{itemize}
    Then there exists a positive integer $r'$ depending only on $d,r,\epsilon$ such that 
    \begin{enumerate}
        \item there exists a couple $(X',\Sigma')$ such that $\pi:X'\to X$ is a $\bQ$-factorialization,
        \item the irreducible components of $\Sigma'$ generate $N^1(X'/X)$,
        \item $H'$ is a very ample divisor on $X'$ such that $H'^d\leq r'$, and
        \item $H'-\Sigma'$ and $H'-\pi^*H$ are ample.
    \end{enumerate}
\end{lem}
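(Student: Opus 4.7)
The plan is to first prove that $X$ itself belongs to a bounded family using the polarization $H$ together with the generalized $\epsilon$-lc condition, then lift this boundedness to a small $\bQ$-factorialization $\pi \colon X' \to X$, and finally construct the desired $H'$ and $\Sigma'$ by a family argument on a stratification of the parameter space.

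First I would show $X$ is bounded. Since $H - (K_X + B + M)$ is ample and $(X, B + M)$ is generalized $\epsilon$-lc (hence generalized klt), vanishing theorems for generalized pairs give $h^i(X, \cO_X(H)) = 0$ for all $i > 0$. Combined with the bound $H^d \leq r$, asymptotic Riemann-Roch then bounds $h^0(X, \cO_X(H))$ from above in terms of $d, r, \epsilon$, so the embedding via $|H|$ realizes $X$ as a subvariety of a fixed projective space with bounded degree. Hence $(X, B+M)$ occurs as a fiber in a bounded family $(\cX, \cB + \cM) \to T$ over a finite type scheme $T$ equipped with a relatively very ample divisor $\cH$ restricting to $H$ on each fiber.

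Second, I would apply the existence of small $\bQ$-factorializations for generalized $\epsilon$-lc pairs in families (in the spirit of \cite[Theorem 2.3]{birkarBoundednessFanoType2024}) to the family $\cX \to T$. After a stratification of $T$ one obtains a small birational morphism $\Pi \colon \cX' \to \cX$ over $T$ whose fibers are small $\bQ$-factorializations $\pi \colon X' \to X$, so $X'$ is itself bounded and the relative Picard number $\rho(X'/X)$ is uniformly bounded. The boundedness of $\cX'/T$ then provides a relatively very ample divisor on $\cX'$ whose fiberwise top self-intersection is bounded; this yields a candidate $H'$ on each fiber with $(H')^d$ bounded, and after enlarging by a bounded multiple of an ample class we may assume $H' - \pi^* H$ is ample.

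Finally, to build $\Sigma'$ I would stratify $T$ further so that the relative Néron-Severi group $N^1(\cX'_t/\cX_t)$ is locally constant; by constructibility of the relative Picard scheme, one can, after a finite étale cover of each stratum, choose finitely many prime divisors on $\cX'$ whose fiberwise restrictions span $N^1(X'/X)$, and $\Sigma'$ is taken to be their reduced sum (under the smallness of $\pi$, these correspond bijectively to Weil divisors on $X$ whose strict transforms become $\bQ$-Cartier on $X'$). Replacing $H'$ by a bounded multiple if necessary ensures $H' - \Sigma'$ is ample while $(H')^d \leq r'$ for some $r'$ depending only on $d, r, \epsilon$. The main obstacle is the uniform control of the relative Picard rank and the explicit fiberwise choice of generators of $N^1(X'/X)$ throughout the bounded family; this is handled by Noetherian induction on $T$ combined with constructibility of the relative Picard scheme, once small $\bQ$-factorializations have been produced uniformly in families.
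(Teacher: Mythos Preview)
Your overall strategy---bound $X$, lift to a bounded $\bQ$-factorialization $X'$, then extract $\Sigma'$ and $H'$ via a stratification/constructibility argument on the parameter space---is plausible but differs from the paper's proof and contains some imprecisions. In Step~1 the claim that ``asymptotic Riemann--Roch then bounds $h^0(X,\cO_X(H))$'' is not correct as stated: asymptotic Riemann--Roch controls only the leading term of the Hilbert polynomial, not its value at $m=1$; one needs a Koll\'ar--Matsusaka type bound on the lower coefficients, or one can simply invoke \cite[Theorem~2.3]{birkarBoundednessFanoType2024} directly to bound $X'$ (this is what the paper does). In Step~2 you assume that small $\bQ$-factorializations can be performed uniformly in families; this is not what the cited theorem provides and would require further justification.

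More substantively, the paper's construction of $\Sigma'$ takes a quite different route. After citing \cite[Theorem~2.3]{birkarBoundednessFanoType2024} to bound $X'$, the paper observes that a bounded resolution of $X'$ has bounded Picard number, hence $\rho(X'/X)$ is uniformly bounded, and then proceeds by \emph{induction on $\rho(X'/X)$}. Using the cone theorem, $\pi$ is factored into extremal contractions $X'=X_1\to X_2\to\cdots\to X_l=X$; the intermediate variety $X_{l-1}$ is again bounded by the same cited theorem, yielding a very ample $H_{l-1}$ with controlled volume and with $H_{l-1}-\mu^*H$ ample. The inductive hypothesis applied to $X'\to X_{l-1}$ produces $\Sigma'$ generating $N^1(X'/X_{l-1})$ together with $H'$ satisfying $H'-\nu^*H_{l-1}$ ample; one then enlarges $\Sigma'$ by $\Supp(\nu^*H_{l-1})$, which supplies the missing generator of $N^1(X'/X)$ since the last contraction is extremal, and doubles $H'$ accordingly. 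This inductive extremal-contraction argument is more elementary and concrete than your Picard-scheme constructibility approach, and it entirely sidesteps the question of building $\bQ$-factorializations and choosing divisorial generators uniformly in families.
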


\begin{proof}

By \cite[Theorem 2.3]{birkarBoundednessFanoType2024}, there exists a $\bQ$-factorialization \[\pi: X'\to X\] such that $X'$ is in a bounded family. 
Therefore, there exists a bounded resolution $W$ of $X'$ such that the Picard number $\rho(X')\leq\rho(W)$ is bounded from above, and hence the relative Picard number $\rho(X'/X)$ is also bounded from above. In the following we apply induction on $\rho(X'/X)$ to construct $\Sigma'$ and $H'$ on $X'$ satisfying the desired properties. 

By the cone theorem \cite[Theorem 3.7]{kollarBirationalGeometryAlgebraic1998}, the birational morphism $\pi: X'\to X$ can be decomposed into a sequence of extremal contractions \[X'=X_1\to X_2\to \cdots \to X_{l-1}\to X_l=X.\] Again by \cite[Theorem 2.3]{birkarBoundednessFanoType2024}, $X_{l-1}$ also belongs to a bounded family. Hence, there exists a positive integer $r_{l-1}$ depending only on $d,r,\epsilon$ and a very ample divisor $H_{l-1}$ on $X_{l-1}$ such that \[H_{l-1}^d\leq r_{l-1} \quad\text{and}\quad H_{l-1}-\mu^*H\enspace \text{is ample,}\] where $\mu:X_{l-1}\to X$. Writing \[K_{X_{l-1}}+B_{l-1}+\M_{X_{l-1}}=\mu^*(K_X+B+\M_X),\] it follows that \[H_{l-1}-(K_{X_{l-1}}+B_{l-1}+\M_{X_{l-1}})\] is ample. Since $\rho(X'/X_{l-1})<\rho(X'/X)$, by the inductive hypothesis we obtain the following:
\begin{itemize}
    \item there exists a couple $(X',\Sigma')$ such that the irreducible components of $\Sigma'$ generate $N^1(X'/X_{l-1})$,
    \item there exists a fixed $r'\in \bN$ and a very ample divisor $H'$ on $X'$ such that $H'^d\leq r'$, and
    \item both $H'-\Sigma'$ and $H'-\nu^*H_{l-1}$ are ample, where $\nu:X'\to X_{l-1}$. 
\end{itemize}
In particular, $H'-\pi^*H$ is also ample.

Finally, since $H_{l-1}$ is ample over $X$ and $\mu:X_{l-1}\to X$ is an extremal contraction, by replacing $\Sigma'$ with $\Sigma'\cup \Supp(\nu^*H_{l-1})$, $H'$ with $2H'$, and $r'$ with $2^dr'$, we conclude that the irreducible components of $\Sigma'$ generate $N^1(X'/X)$.
\end{proof}

The following lemma bounds certain vertical divisors in a log bounded family.
\begin{lem}\label{approximate unbounded polarization via bounded polarization}
    Let $\epsilon,\delta\in \bR^{>0}$ and $\Phi\subset [0,1]\cap\bQ$ be a finite set. Assume that
    \begin{itemize}
        \item $(X,B)$ is a projective $\mathbb{Q}$-factorial $\epsilon$-lc pair belonging to a bounded family,
        \item the coefficients of $B$ are in $\Phi$,
        \item $K_X+B$ is semi-ample and defines a contraction $f:X\rightarrow Z$,
        \item there is a canonical bundle formula $K_X+B\sim_\bQ f^*(K_Z+B_Z+\M_Z)$ such that $(Z,B_Z,\M)$ is a generalized $\delta$-lc pair, and
        \item $N$ is an integral divisor on $X$ such that $N\sim_{\mathbb{Q},\eta_Z}0$, where $\eta_Z$ is the generic point of $Z$.
    \end{itemize}
    Then there exists an effective $\bQ$-divisor $D$ on $X$ such that
    \begin{enumerate}
        \item $D$ is vertical over $Z$,
        \item $N\sim_{\mathbb{Q},Z} D$, and
        \item $(X,\Supp(B)\cup\Supp(D))$ is log bounded.
    \end{enumerate}
    \begin{proof}
    \begin{enumerate} [label=\textsl{Step} \arabic{enumi}., wide=13pt, itemsep=13pt]
    \item Since $(X,B)$ belongs to a log bounded family of $\epsilon$-lc pairs with coefficients of $B$ contained in a finite set,
it follows from \cite[Lemma 2.17]{birkarBoundednessEllipticCalabiYau2024} that the set of morphisms $f:(X,B)\to Z$ is bounded. Moreover, there exists a bounded $m\in \bN$ such that $m(K_X+B)$ is base point free and satisfies \[m(K_X+B)\sim mf^*(K_Z+B_Z+\M_Z).\]
Set \[H:=m(K_Z+B_Z+\M_Z).\] Then $H$ is a very ample divisor on $Z$, and $H^{\dim Z}$ is bounded from above.

Applying Lemma \ref{bounded Q-factorialization}, we obtain that
    \begin{itemize}
        \item there exists a couple $(Z',\Sigma')$ such that $\pi:Z'\to Z$ is a $\bQ$-factorialization,
        \item the irreducible components of $\Sigma'$ generate $N^1(Z'/Z)$,
        \item $H'$ is a very ample divisor on $Z'$ such that $H'^{\dim Z'}$ is bounded from above, and
        \item both $H'-\Sigma'$ and $H'-\pi^*H$ are ample.
    \end{itemize}
Therefore, by replacing $m$ with a bounded multiple, we may assume that $H-\Sigma$ is pseudo-effective, where $\Sigma=\pi_*\Sigma'$.

By \cite[Proposition 3.6]{birkarBoundednessEllipticCalabiYau2024}, there exists a commutative diagram
    \[\xymatrix{
      X'\ar@{-->}[r]^\mu \ar[d]_{f'} & X\ar[d]^{f}\\
      Z'\ar[r]^\pi  & Z}\]
      such that $\mu:X'\dashrightarrow X$ is an isomorphism in codimension one.

\item Let $N' :=\mu^* N$. Since $N \sim_{\bQ,\eta_Z} 0$, we have
\[
N' \sim_{\bQ,\eta_{Z'}} 0,
\] 
where $\eta_{Z'}$ is the generic point of $Z'$. 
As $Z'$ is $\bQ$-factorial, there exists a very exceptional/$Z'$ $\bQ$-divisor $L'$ on $X'$ such that 
\[
N' \sim_{\bQ, Z'} L'.
\] 
Since the irreducible components of $\Sigma'$ generate $N^1(Z'/Z)$, there exists a $\bQ$-Cartier $\bQ$-divisor $C'$ on $Z'$ with 
\[
\Supp(C') \subseteq \Supp(\Sigma') \quad \text{and} \quad N' \sim_{\bQ,Z} L' + f'^* C'.
\] 
Let $L := \mu_* L'$. Then $L$ is very exceptional over $Z$, because $L'$ is very exceptional over $Z'$, and $\pi, \mu$ are isomorphisms in codimension one. Hence, we have
\[
N \sim_{\bQ,Z} L + \mu_* f'^* C'.
\] 
Since $f: X \to Z$ is a bounded morphism, by \cite[Lemma 2.20]{birkarBoundednessEllipticCalabiYau2024}, we conclude that $\Supp(L)$ is bounded.

Possibly enlarging $\Sigma$ and replacing $H',H$ by bounded multiples, we may assume that there exists an effective $\bQ$-Cartier $\bQ$-divisor $T$ on $Z$ such that 
\[
\Supp(T) \subset \Sigma \quad \text{and} \quad C' + \pi^* T, \; L + f^* T \text{ are effective}.
\] 
Replacing $C'$ and $L$ by $C' + \pi^* T$ and $L + f^* T$, we may assume that $C'$ and $L$ are effective $\bQ$-divisors. 

Since $H - \Sigma$ is pseudo-effective, it follows that $m(K_X + B) - \mu_* f'^* \Sigma'$ is pseudo-effective. Therefore, the couple
\[
(X, \Supp(B) \cup \Supp(L) \cup \Supp(\mu_* f'^* C'))
\] 
is log bounded.

Finally, set
\[
D := L+\mu_*f'^*C'.
\] 
Then $D$ is effective, vertical over $Z$, satisfies $N\sim_{\bQ,Z}D$, and $(X,\Supp(B)\cup\Supp(D))$ is log bounded. This completes the proof.
    \end{enumerate}
    \end{proof}
\end{lem}

We also need the following result on the finiteness of log canonical models when the boundary divisors vary in a polytope.
\begin{lem}\label{finiteness of canonical models}
    Assume that
    \begin{enumerate}
        \item $(X,B)$ is a projective $\bQ$-factorial klt pair,
        \item $X\to Z$ is a contraction to a normal projective variety,
        \item $K_X+B\sim_{\bQ,Z} 0$,
        \item $L$ is an effective $\bQ$-divisor on $X$ which is big over $Z$,
        \item $D_1,\cdots,D_k$ are effective $\bQ$-divisors on $X$ that are vertical over $Z$, and
        \item $V$ is the affine subspace generated by $L$ and all $D_i$ in the real vector space of divisors, and $P$ is the polytope in $V$ generated by $L$ and all $D_i$.
    \end{enumerate}
    Then there exist finitely many rational maps $\pi_j:X\dashrightarrow Y_j/Z$ for $1\leq j\leq l$ satisfying the following.
    
    For each point $C\in P$, there exists $1\leq j\leq l$ such that $\pi_j$ gives the ample model of $C$ over $Z$.

\begin{proof}

Let $\delta$ be a positive rational number such that $(X, B + \delta (L + \sum_{i=1}^k D_i))$ is klt. 
Let $V'$ be the affine subspace generated by $B + \delta L$ and $B + \delta D_i$ in the real vector space of divisors, 
and let $P'$ be the polytope in $V'$ generated by $B + \delta L$ and $B + \delta D_i$. 
Since $K_X + B \sim_{\bQ,Z} 0$, it suffices to prove the finiteness of ample models of $K_X + C'$ over $Z$ for all $C' \in P'$. 
By \cite[Theorem 5.1]{mengMMPforlocallystablefamilies} (see also \cite[Theorem 2.10.3]{Kawamatabook}), 
it is enough to show that $(X, C')$ has a good minimal model over $Z$ for every $C' \in P'$.

Let $C' \in P'$ and write 
\[
C' = B + a_0 L + \textstyle\sum a_i D_i, \quad \text{with} \ \textstyle\sum a_i = \delta.
\]
If $a_0 > 0$, then $K_X + C'$ is big over $Z$, and by \cite{birkarExistenceMinimalModels2010}, $(X, C')$ has a good minimal model over $Z$. 
If $a_0 = 0$, then since each $D_i$ is vertical over $Z$, we have $K_X + C' \sim_{\bQ,\eta_Z} 0 $, where $\eta_Z$ is the generic point of $Z$. 
By \cite[Theorem 1.4]{birkarExistenceLogCanonical2012} or \cite[Theorem 1.1]{haconExistenceLogCanonical2013}, $(X, C')$ has a good minimal model over $Z$. This completes the proof.
\end{proof}
\end{lem}

With the necessary preparations complete, we can now prove the main theorem of this section.

\begin{proof}[Proof of Theorem \ref{mainthm2}]
    By Theorem \ref{thm:bdd of weak pcy fibration}, there exists a couple $(V,\Theta)$, an effective integral divisor $J$ on $V$ and a positive integer $l$, depending only on $d,\Phi,v,r,\epsilon$, such that 
    \begin{itemize}
     \item there is a contraction $h:V\to Z$ and $V$ is $\bQ$-factorial,
     \item $V\dashrightarrow X/Z$ is an isomorphism in codimension one,
     \item $(V,\Theta+\Supp(J))$ is bounded,
     \item $\Theta$ contains both $B_V$ and $h^*H_Z$, where $B_V$ is the strict transform of $B$ on $V$, and $H_Z$ is a general element of $|6dH|$, and
     \item $J_X \equiv lA$ over the generic point of $Z$, where $J_X$ is the strict transform of $J$ on $X$.
 \end{itemize}

 Since $\Supp R^1f_*\cO_X\subsetneq Z$, Grauert's theorem implies $h^1(X_g,\cO_{X_g})=0$, where $X_g$ is the general fiber of $f:X\to Z$. Consequently, \[J_X\sim_{\bQ,\eta_Z}lA,\] where $\eta_Z$ is the generic point of $Z$. Since $V\dashrightarrow X/Z$ is an isomorphism in codimension one, we have \[J\sim_{\bQ,\eta_Z}lA_V,\] where $A_V$ is the strict transform of $A$ on $V$. By Lemma \ref{approximate unbounded polarization via bounded polarization}, there exists a log bounded pair $(V,B_V+J+\sum D_i)$ and rational numbers $a_i\geq 0$ for $1\leq i\leq k$ such that \[J+\textstyle\sum a_iD_i\sim_{\bQ,Z} lA_V.\]

 By log boundedness, we may assume there exists a family $(\cV,\cB_\cV)\rightarrow \cS$ together with divisors $\cJ$ and $\cD_i$ on $\cV$ such that there exists a point $s \in \cS$ with 
\[
(V,B_V) \simeq (\cV_s,\cB_{\cV_s}), \quad \cJ_s \simeq J, \quad \text{and} \quad \cD_{i,s} \simeq D_i.
\] 
By \cite[Proposition 2.4]{HaconlogCalabiYaupairsofFanotype}, after passing to a stratification of $\cS$, we may assume that $K_{\cV}+\cB_\cV$ is $\mathbb{Q}$-Cartier and klt. By \cite[Lemma 2.8]{hanBirationalBoundednessRationally2022}, there is a fibration $g:\cV \to \cZ$ over $\cS$ such that $\cV_s \to \cZ_s$ is isomorphic to $V \to Z$. Moreover, by Remark \ref{big divisor in family}, we may assume that $\cJ$ is big over $\cZ$. Since each $D_i$ is vertical over $Z$, $\cD_i$ is vertical over $\cZ$ as well. 

Let $\cH$ be a Cartier divisor on $\cZ$ which is ample over $\cS$, and let $\cG \in |6n \cH|$ be a general member, where $n = \dim \cX$. By the boundedness of the length of extremal rays, the log canonical model of 
\[
(\cV, \cB_\cV + \mu ( \cJ + \textstyle\sum a_i \cD_i))
\] 
over $\cZ$ coincides with the log canonical model of 
\[
(\cV, \cB_\cV + \mu (\cJ + \textstyle\sum a_i \cD_i) + \tfrac{1}{2} \cG)
\] 
over $\cS$, for $\mu>0$ sufficiently small. After passing to a stratification and applying \cite[Theorem 1.2]{haconBoundednessModuliVarieties2018} to a fiberwise log resolution of 
\[
(\cV, \cB_\cV + \mu (\cJ + \textstyle\sum a_i\cD_i) + \tfrac{1}{2} \cG)
\] 
over $\cS$, we may assume that it admits a relative log canonical model over $\cS$ which induces log canonical models fiberwise. 

By Lemma \ref{finiteness of canonical models}, there exist finitely many rational maps 
\[
\cV \dashrightarrow \cY_j / \cZ
\] 
such that for every $(a_1, a_2, \dots, a_k)$, there exists $j$ for which $\cY_j$ is the log canonical model of 
\[
(\cV, \cB_\cV + \mu (\cJ + \textstyle\sum a_i \cD_i))
\] 
over $\cZ$. Then $\cY_{j,s}$ is the log canonical model of 
\[
(V, B_V + \mu (J + \textstyle\sum a_i D_i))
\] 
over $Z$, hence also of $(V, B_V + \mu l A_V)$ over $Z$. Since $(X, B + \mu l A)$ is the log canonical model of $(V, B_V + \mu l A_V)$ over $Z$, it follows that $\cY_{j,s} \simeq X$. Therefore, we conclude that $(X, B + f^* H)$ is log bounded.
\end{proof}

\section{Stable minimal models and fibered Calabi--Yau varieties}\label{sec:Stable minimal models and fibered Calabi--Yau varieties}
In this section, we apply our boundedness results on polarized log Calabi--Yau fibrations to stable minimal models and fibered Calabi--Yau varieties.
 
\begin{definition}[{\cite[Definition 1.1]{birkarBoundednessVolumeGeneralised2021}}]\label{familyofgpairs}
	Let $d\in \bN$,  $u\in \bQ^{>0}$, and $\Phi\subset \bQ^{\geq 0}$ be a DCC set. Let $\cF_{gklt}(d,\Phi,u)$ be the set of projective generalized pairs $(X,B,\M)$ such that 
		\begin{enumerate}
			\item $(X,B,\M)$ is klt of dimension $d$,
			\item the coefficients of $B$ are in $\Phi$,
			\item there is a birational morphism  $X'\to X$ such that $\M$ descends to $X'$ and $\M_{X'}=\sum \mu_i M_i'$ where $M_i'$ is nef Cartier and $\mu_i\in \Phi$ for any $i$,
			\item $K_X+B+\M_X$ is ample, and
            \item $\vol(K_X+B+\M_X)=u$.
		\end{enumerate}
\end{definition}

\begin{proof}[Proof of Corollary \ref{boundedness of klt stable minimal models}]
By Lemma \ref{lem: A>0}, we may assume that $A$ is an effective integral divisor and $\vol(A|_F)=v$ is fixed.
By \cite[Lemma 8.2]{birkarBoundednessVolumeGeneralised2021}, all log discrepancies of $(X,B)$ that are smaller than $1$ are in a fixed finite set $\Phi'$, depending only on $d,u,v,\Phi$. In particular, the coefficients of $B$ belong to $\Phi'$, and there exists a positive number $\epsilon$, depending only on $d,u,v,\Phi$, such that $(X,B)$ is $\epsilon$-lc. 

By \cite[Lemma 7.4]{birkarBoundednessVolumeGeneralised2021},
there exists a positive integer $p$, depending only on $d,u,\Phi$, such that we can write a canonical bundle formula \[K_X+B\sim_{\bQ} f^*(K_Z+B_Z+\M_Z),\] where $p\M_{Z'}$ is Cartier on some high resolution $Z'\to Z$. 

By \cite[Theorem 1.1]{haconACCLogCanonical2014}, the coefficients of $B_Z$ are in a fixed DCC set $\Psi$, depending only on $d,\Phi$. Replacing $\Psi$ by $\Psi \cup\{\frac{1}{p}\}$, we obtain \[(Z,B_Z,\M)\in \cF_{gklt}(d',\Psi,u),\] where $d'=\dim Z$. By \cite[Theorem 1.4]{birkarBoundednessVolumeGeneralised2021}, $(Z,B_Z,\M)$ belongs to a bounded family. Furthermore, by the remark following \cite[Theorem 4.3]{birkarModuliAlgebraicVarieties2022}, there exists a fixed positive integer $l$ such that \[H:=l(K_Z+B_Z+\M_Z)\] is very ample. Consequently, \[f:((X,B),A)\to (Z,H)\] is a $(d,\Phi',v,l^{d'}u,\epsilon)$-polarized log Calabi--Yau fibration. Therefore, the corollary follows from Theorem \ref{mainthm1} and Theorem \ref{mainthm2}. 
\end{proof}

\begin{proof}[Proof of Corollary \ref{cor: bdd of fibered cy}]
By \cite[Lemma 2.11]{zhuBoundednessStableMinimal2025} (see also \cite[Theorem 1.9]{birkarSingularitiesFanoFibrations2023}),  there exists a canonical bundle formula \[K_X+B\sim_{\bR}f^*(K_Z+B_Z+\M_Z)\] such that $(Z,B_Z,\M)$ is a $\delta$-lc generalized pair for some $\delta\in \bR^{>0}$ depending only on $d,\epsilon,v$. Since $Z$ is rationally connected, by \cite[Theorem 1.7]{birkarSingularitiesFanoFibrations2023}, there exists a projective variety $Z'$ such that
\begin{itemize}
    \item $Z'\dashrightarrow Z$ is an isomorphism in codimension one, and
    \item there is a fixed positive integer $r$ and a very ample divisor $H'$ on $Z'$ such that $H'^{\dim Z'}\leq r$.
\end{itemize}

By \cite[Propositions~3.6 and~3.7]{birkarBoundednessEllipticCalabiYau2024}, there exists an $\epsilon$-lc pair $(X',B')$ which is isomorphic in codimension one to $(X,B)$, and a contraction  $f':X'\to Z'$ with $K_{X'}+B'\sim_{\bR,Z'}0$. Let $A'$ be the strict transform of $A$ on $X'$. Then \[\vol(A'|_{F'})=\vol(A|_F)\leq v,\] where $F'$ is the general fiber of $f':X'\to Z'$. Let $X''$ be the ample model of $A'$ over $Z'$, and $B''$ and $A''$ be the pushdown of $B'$ and $A'$ on $X''$. Then we conclude that \[f'':((X'',B''),A'')\to Z'\] is a $(d,v,r,\epsilon)$-polarized log Calabi--Yau fibration. Therefore, Theorem \ref{mainthm1} implies that $X''$ is bounded in codimension one. Since $X\dashrightarrow X''$ does not extract any divisor, by \cite[Corollary 2.13]{birkarBoundednessEllipticCalabiYau2024}, $X$ is bounded in codimension one.
\end{proof}

\bibliography{pcyf_abbreviation}
\bibliographystyle{alphaurl}

\end{document}